\documentclass[11pt]{article}

\usepackage{amsmath,amssymb,amsthm,mathrsfs}
\usepackage[margin=1in]{geometry}
\usepackage{hyperref}

\hypersetup{
  colorlinks=true,
  linkcolor=blue,
  citecolor=blue,
  urlcolor=blue
}
\newtheorem{theorem}{Theorem}[section]
\newtheorem{proposition}[theorem]{Proposition}
\newtheorem{lemma}[theorem]{Lemma}
\newtheorem{corollary}[theorem]{Corollary}
\theoremstyle{definition}
\newtheorem{definition}[theorem]{Definition}
\theoremstyle{remark}
\newtheorem{remark}[theorem]{Remark}
\theoremstyle{definition}
\newtheorem{example}[theorem]{Example}
\theoremstyle{plain}

\newtheorem*{maintheorem}{Theorem A}

\newcommand{\Hom}{\mathrm{Hom}}

\newcommand{\Z}{\mathrm{Z}}
\newcommand{\C}{\mathrm{C}}

\newcommand{\Prob}{\mathbf{P}}

\newcommand{\Comm}{\mathrm{Comm}}

\newcommand{\Alt}{\mathrm{Alt}}
\newcommand{\citedin}[1]{\newblock \emph{Cited in:} #1.}

\makeatletter
\def\@fnsymbol#1{%
  \ifcase#1\or
    \TextOrMath\textasteriskcentered \ast%
  \or
    \TextOrMath{\textasteriskcentered\textasteriskcentered}{\ast\ast}%
  \else
    \@ctrerr
  \fi
}

\makeatother

\newcommand{\Author}[2]{%
    {#1}\thanks{Email: \href{mailto:#2}{\texttt{#2}}}\\\Affiliation}

\newcommand{\Department}[1]{\def\DeptName{#1}}
\newcommand{\Faculty}[1]{\def\FacName{#1}}
\newcommand{\University}[1]{\def\UnivName{#1}}

\Department{Department of Mathematics}
\Faculty{Faculty of Natural Sciences}
\University{Ariel University, Ariel, Israel}

\author{
\Author{Vadim E. Levit}
{levitv@ariel.ac.il}
\and 
\Author{Robert Shwartz}
{robertsh@ariel.ac.il}
}

\title{Higher Commutativity in Finite Groups:\\[0.2em]
Exact Asymptotics and Finite Spectrum}

\date{}

\begin{document}
\maketitle

\begin{abstract}
For a finite group $G$ and an integer $r\ge 1$ let
\[
P_r(G):=\frac{|\Hom(\mathbb Z^r,G)|}{|G|^r},
\qquad
\kappa_r(G):=\big|\Hom(\mathbb Z^r,G)/G\big|,
\]
where $G$ acts by diagonal conjugation. Write
\[
m(G):=\max\{|A|:A\le G\text{ abelian}\},\qquad
N_{\max}(G):=\#\{A\le G:\ A\text{ abelian},\ |A|=m(G)\},
\]
and let $b(G)<m(G)$ be the second-largest order of an abelian subgroup when $G$ is non-abelian. We prove the exact dominant-abelian asymptotic
\[
|\Hom(\mathbb Z^r,G)|=N_{\max}(G)\,m(G)^r+O\!\bigl(b(G)^r\bigr),
\]
hence
\[
\lim_{r\to\infty}P_r(G)^{1/r}=\frac{m(G)}{|G|},
\qquad
\lim_{r\to\infty}\kappa_r(G)^{1/r}=m(G).
\]
For non-abelian $G$ the rank-generating series
\[
\mathcal P_G(z):=\sum_{r\ge 2}P_r(G)z^{r-2}
\]
is rational, admits a finite partial-fraction expansion supported on abelian subgroup indices, and has first pole $z=|G|/m(G)$, with coefficient $N_{\max}(G)m(G)/|G|$ of the term $(|G|/m(G)-z)^{-1}$. We also prove that for every split abelian extension $G=A\rtimes K$,
\[
m(G)=\max_{B\le K\ \mathrm{abelian}} |\C_A(B)|\,|B|,
\]
so the dominant exponential base is already encoded by fixed-subgroup geometry in the acting quotient. When the acting quotient $K$ is abelian, we obtain the exact subgroup-lattice formula
\[
|\Hom(\mathbb Z^r,G)|=\sum_{B\le K}\lambda(B)\,|\C_A(B)|^r\,\varphi_r(B),
\]
where $\varphi_r(B)$ counts generating $r$-tuples of $B$ and $\lambda(B)$ counts abelian lifts of $B$ with full kernel $\C_A(B)$. Equivalently, the finite Dirichlet spectrum is explicit up to the coefficients $\lambda(B)$, and in the cyclic and coprime cases those coefficients admit closed formulas; in particular,
\[
N_{\max}(G)=\sum_{\substack{B\le K\\ |\C_A(B)|\,|B|=m(G)}}\lambda(B).
\]
In the coprime-action regime $(|A|,|K|)=1$, these lift multiplicities collapse to
\[
\lambda(B)=|A:\C_A(B)|\qquad (B\le K),
\]
so the exact count is determined entirely by the fixed-subgroup lattice. For cyclic quotients this specializes to the Jordan-totient formula
\[
|\Hom(\mathbb Z^r,A\rtimes C_\omega)|=\sum_{n\mid \omega}|A:\C_A(B_n)|\,|\C_A(B_n)|^r J_r(n),
\]
so in the one-generator case the lift multiplicity collapses to the fixed-stratum index. We also prove an inverse finite-spectrum theorem: the tail of the hierarchy $\{P_r(G)\}_{r\ge 2}$ determines the full abelian-index spectrum $\{(m,c_m(G))\}$. The same finite spectrum yields a linear recurrence of exact order equal to the number of nonzero coefficients, equivalently a finite-rank Hankel matrix for the hierarchy. Thus the higher-commutativity hierarchy has an exact leading asymptotic, a finite spectral decomposition, and a finite-determination principle controlled by the maximum-order abelian subgroups of $G$.
\end{abstract}

\medskip
\noindent\textbf{Keywords:} finite groups; commuting probability; multiple commutativity degree; maximum-order abelian subgroup; generating series.

\medskip
\noindent\textbf{MSC 2020:} Primary 20P05; Secondary 20D60, 20D15.

\section{Introduction}

For a finite group $G$ and an integer $r\ge 1$ define
\[
P_r(G):=\frac{|\Hom(\mathbb Z^r,G)|}{|G|^r},
\qquad
\kappa_r(G):=\big|\Hom(\mathbb Z^r,G)/G\big|,
\]
where $G$ acts by diagonal conjugation. Thus $P_1(G)=1$, $\kappa_1(G)=k(G)$, and $P_2(G)=k(G)/|G|$, the classical commuting probability of Erd\H{o}s--Tur\'an and Gustafson \cite{ErdosTuran1968,Gustafson1973}; more generally, $P_r(G)=d_{r-1}(G)$ in Lescot's notation \cite{Lescot1995} for every $r\ge 2$. Burnside's lemma gives
\[
P_{r+1}(G)=\frac{\kappa_r(G)}{|G|^r},
\]
so the probabilistic hierarchy $\{P_r(G)\}$ and the orbit-count hierarchy $\{\kappa_r(G)\}$ are the same data in two normalizations.
The classical invariant $P_2(G)$ has a substantial modern literature, including structural bounds for finite groups \cite{GuralnickRobinson2006} and global restrictions on the set of possible commuting probabilities \cite{Eberhard2015}. For semidirect products built from finite abelian groups, Nath computed the ordinary commutativity degree in a natural family answering a question of Lescot \cite{Nath2013}; the exact split-extension formulas below may be viewed as higher-rank analogues in which the whole hierarchy is controlled by fixed-subgroup data.

Another related direction studies generalized commuting probabilities attached to permutation equalities rather than to commuting tuples. Cherniavsky--Goldstein--Levit--Shwartz related these probabilities to Hultman numbers, and Shwartz--Levit developed the corresponding signed version \cite{CherniavskyGoldsteinLevitShwartz2017,ShwartzLevit2022}. The present hierarchy is different in source and structure: it counts homomorphisms from free abelian groups, so its spectral data are controlled by abelian subgroups of $G$.

This paper isolates the asymptotic and finite-spectrum side of higher commutativity. We vary the rank of the free abelian source $\mathbb Z^r$ and ask how the towers $\Hom(\mathbb Z^r,G)$ and $\kappa_r(G)$ are controlled by the abelian-subgroup geometry of $G$.

Write
\[
m(G):=\max\{|A|:A\le G\text{ abelian}\},\qquad
N_{\max}(G):=\#\{A\le G:\ A\text{ abelian},\ |A|=m(G)\},
\]
and let $b(G)<m(G)$ denote the second-largest order of an abelian subgroup when $G$ is non-abelian, with $b(G)=0$ when $G$ is abelian. Our main theorem identifies the exact dominant contribution.

\begin{maintheorem}[Asymptotic-spectral bridge]
For every finite group $G$,
\[
|\Hom(\mathbb Z^r,G)|=N_{\max}(G)\,m(G)^r+O\!\bigl(b(G)^r\bigr)
\qquad (r\to\infty).
\]
Equivalently,
\[
P_r(G)=N_{\max}(G)\Bigl(\frac{m(G)}{|G|}\Bigr)^r+O\!\Bigl(\Bigl(\frac{b(G)}{|G|}\Bigr)^r\Bigr),
\qquad
\kappa_r(G)=\frac{N_{\max}(G)m(G)}{|G|}\,m(G)^r+O\!\bigl(b(G)^r\bigr).
\]
Hence
\[
\lim_{r\to\infty}P_r(G)^{1/r}=\frac{m(G)}{|G|},
\qquad
\lim_{r\to\infty}\kappa_r(G)^{1/r}=m(G).
\]
\end{maintheorem}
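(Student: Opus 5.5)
The idea is to write $|\Hom(\mathbb Z^r,G)|$ exactly as a finite linear combination of $r$-th powers of orders of abelian subgroups, via Möbius inversion on the lattice of abelian subgroups. A homomorphism $\mathbb Z^r\to G$ is the same as a commuting $r$-tuple $(g_1,\dots,g_r)$. Such a tuple generates an abelian subgroup $A=\langle g_1,\dots,g_r\rangle$, and this $A$ is determined by the tuple. Hence
\[
|\Hom(\mathbb Z^r,G)|=\sum_{A\le G\text{ abelian}}\varphi_r(A),
\]
where $\varphi_r(A)$ is the number of generating $r$-tuples of $A$.

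The first step is Hall's Möbius inversion inside the subgroup lattice of $A$:
\[
\varphi_r(A)=\sum_{B\le A}\mu(B,A)\,|B|^r .
\]
Substituting and exchanging the sums gives
\[
|\Hom(\mathbb Z^r,G)|=\sum_{B\le G\text{ abelian}}c(B)\,|B|^r,\qquad c(B):=\sum_{\substack{A\ge B\\ A\text{ abelian}}}\mu(B,A).
\]
Grouping by order, this becomes a finite Dirichlet-type sum $\sum_m c_m m^r$, with $m$ ranging over orders of abelian subgroups. This is also the spectral form used later for the generating series.

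The second step isolates the dominant terms. If $|B|=m(G)$, then the only abelian $A\ge B$ is $A=B$ itself, so $c(B)=\mu(B,B)=1$. The coefficient of $m(G)^r$ is therefore exactly $N_{\max}(G)$. Every other term has $|B|\le b(G)$, and the number of terms is finite and independent of $r$. So the remainder is bounded by $\bigl(\sum_{|B|<m}|c(B)|\bigr)\,b(G)^r=O(b(G)^r)$.

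For abelian $G$ there is no remainder: $|\Hom|=|G|^r$ and $N_{\max}=1$, which is consistent with $b(G)=0$. Alternatively, one can avoid Möbius inversion in the second step with the elementary bounds $|A|^r-\sum_{B<A\text{ maximal}}|B|^r\le\varphi_r(A)\le|A|^r$. These show directly that the maximum-order $A$ contribute $m^r+O(b^r)$ each and every other $A$ contributes $O(b^r)$.

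The remaining statements follow formally. Dividing by $|G|^r$ gives the formula for $P_r(G)$. Burnside's lemma gives $\kappa_r=P_{r+1}|G|^r=|\Hom(\mathbb Z^{r+1},G)|/|G|$, which yields $\kappa_r=\frac{N_{\max}m}{|G|}m^r+O(b^{r+1}/|G|)=\frac{N_{\max}m}{|G|}m^r+O(b^r)$. The limits follow because $N_{\max}\ge1$ and $b<m$, so the leading term dominates and its $r$-th root tends to $m$.

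The only step needing care is the coefficient identity $c(B)=1$ for maximum-order $B$, together with the fact that the error constant is finite and independent of $r$. Both are immediate once the sum is organized over the finite lattice of abelian subgroups, so I expect no real obstacle. The main point to state carefully is that the Möbius coefficients $c(B)$ may be negative, which is why only an $O$-bound, not a sign, is claimed for the remainder.
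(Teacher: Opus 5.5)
Your proof is correct, but it takes a different route from the paper's proof of Theorem A.

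\textbf{The paper's route.} It does not partition by generated subgroup. It covers $\Hom(\mathbb Z^r,G)$ by the sets $M^r$ with $M$ maximal abelian, and splits this union into $U_r$ (the maximum-order ones) and $V_r$ (the rest). Inclusion--exclusion on $U_r$ gives $N_{\max}(G)m(G)^r+O(b(G)^r)$, because intersections of distinct maximum-order abelian subgroups have order at most $b(G)$. It then bounds $|V_r|\le|\mathcal M(G)|\,b(G)^r$. No M\"obius function appears.

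\textbf{Your route.} You partition by the generated abelian subgroup and apply Hall's M\"obius inversion to $\varphi_r(A)$. This gives an exact finite expansion with coefficient $c(B)=\sum_{A\ge B\text{ abelian}}\mu(B,A)$. For non-abelian $G$ this coefficient is exactly $-\mu_2(B,G)$ in Torres-Giese's poset $L_2(G)$. So your formula, divided by $|G|^r$, is precisely the Torres-Giese expansion that the paper only imports later, for Theorem~\ref{thm:dirichlet-spectrum}.

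\textbf{What each buys.} Your argument proves Theorem A and the identity $c_{|G|/m(G)}(G)=N_{\max}(G)$ in one step, since $c(B)=\mu(B,B)=1$ for maximum-order $B$. The paper instead obtains that identity by comparing the dominant term of the spectrum with Theorem A. The paper's argument is M\"obius-free and more self-contained. Its error term is controlled by explicit combinatorial data: intersections of maximum-order abelian subgroups and the count $|\mathcal M(G)|$.

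\textbf{Your fallback.} The bounds $|A|^r-\sum_{B<A\text{ maximal}}|B|^r\le\varphi_r(A)\le|A|^r$ are also valid. They give a third elementary proof, sitting between the two: it partitions by generated subgroup like your main argument, but uses a union bound in place of inversion.

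The reduction of $\kappa_r$ via $\kappa_r=|\Hom(\mathbb Z^{r+1},G)|/|G|$ and your treatment of the abelian case are both fine.
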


The generating-series formulation is equally rigid: for non-abelian $G$ we prove a finite abelian-index spectrum
\[
P_r(G)=\sum_{m\ge 2}\frac{c_m(G)}{m^r},
\qquad
\mathcal P_G(z)=\sum_{m\ge 2}\frac{c_m(G)}{m(m-z)},
\]
so $\mathcal P_G(z)=\sum_{r\ge2}P_r(G)z^{r-2}$ is rational, its first pole is $z=|G|/m(G)$, and the coefficient of $(|G|/m(G)-z)^{-1}$ is $N_{\max}(G)m(G)/|G|$. Equivalently, $P_r(G)$ is a finite linear combination of exponentials $m^{-r}$ indexed by abelian subgroup indices; in particular, the hierarchy satisfies a finite linear recurrence and has finite Hankel rank. We further show that the tail of the sequence $\{P_r(G)\}_{r\ge 2}$ determines the entire finite abelian-index spectrum.

For split abelian extensions $G=A\rtimes K$ with $A$ abelian, we also prove
\[
m(G)=\max_{B\le K\ \mathrm{abelian}} |\C_A(B)|\,|B|,
\]
so the dominant exponential base is already visible in fixed-subgroup geometry in the acting quotient. When $K$ is itself abelian, we go further and obtain the exact subgroup-lattice formula
\[
|\Hom(\mathbb Z^r,G)|=\sum_{B\le K}\lambda(B)\,|\C_A(B)|^r\,\varphi_r(B),
\]
where $\varphi_r(B)$ counts generating $r$-tuples of $B$ and $\lambda(B)$ counts abelian lifts of $B$ with full kernel $\C_A(B)$. Equivalently,
\[
|\Hom(\mathbb Z^r,G)|=\sum_{B\le K}\sum_{D\le B}\lambda(B)\mu_B(D)\,(|\C_A(B)|\,|D|)^r,
\]
so the full finite Dirichlet spectrum is explicit up to the coefficients $\lambda(B)$ at the subgroup-lattice level; in the cyclic and coprime cases these coefficients have closed forms, and in particular
\[
N_{\max}(G)=\sum_{\substack{B\le K\\ |\C_A(B)|\,|B|=m(G)}}\lambda(B).
\]
In the coprime regime $(|A|,|K|)=1$, Schur--Zassenhaus and coprime fixed-point rigidity force
\[
\lambda(B)=|A:\C_A(B)|\qquad (B\le K),
\]
so the exact tuple count becomes purely fixed-point-theoretic. When $K$ is cyclic, say $K=C_\omega$, this specializes to the divisor-stratified Jordan-totient formula
\[
|\Hom(\mathbb Z^r,A\rtimes C_\omega)|=\sum_{n\mid \omega}|A:\C_A(B_n)|\,|\C_A(B_n)|^r J_r(n),
\]
where $J_r(n)$ is the $r$th Jordan totient and $B_n\le C_\omega$ is the unique subgroup of order $n$. Thus the cyclic case is the one-generator specialization in which the lift multiplicity collapses to the fixed-stratum index $|A:\C_A(B_n)|$.

The closest predecessors on the counting side are Torres--Giese, who proves that $P_2(G,s)$ is a finite Dirichlet series built from the poset of abelian subgroups \cite{TorresGiese2012}, and Kaur--Prajapati--Prasad, who prove that the orbit series $B_G(t)=\sum_{r\ge0}\kappa_r(G)t^r$ is rational with exponential base $m(G)$ \cite{KaurPrajapatiPrasad2019}. The present paper sharpens that picture by identifying the exact leading coefficient $N_{\max}(G)$, the secondary scale $b(G)$, and the first pole of $\mathcal P_G(z)$ together with the coefficient of its leading term. The rigidity, extremal, and low-rank formula aspects are treated separately in the companion paper \cite{LevitShwartzPartII}.

Sections~2 and~3 develop the basic properties of $P_r(G)$ and the centralizer/orbit recursions. Section~4 proves the dominant asymptotic, the entropy corollaries, the finite Dirichlet-spectrum theorem, the recurrence and Hankel-rank consequences, the inverse-spectrum rigidity theorem, and the exact formulas for abelian split quotients. Section~5 concludes.
\section{Definitions and basic properties}

\begin{definition}[Higher commutativity probabilities]
Let $G$ be a finite group and $r\ge 1$ an integer. Define
\[
\Comm_r(G):=\{(x_1,\dots,x_r)\in G^r : [x_i,x_j]=1\ \forall i,j\},
\]
and
\[
P_r(G):=\frac{|\Comm_r(G)|}{|G|^r}.
\]
Equivalently, $\Comm_r(G)=\Hom(\mathbb Z^r,G)$ and $P_r(G)=|\Hom(\mathbb Z^r,G)|/|G|^r$.
\end{definition}

\begin{remark}
$P_1(G)=1$ for every finite group $G$, and $P_r(G)=1$ for some $r\ge 2$ if and only if $G$ is abelian.
For $r=2$, $P_2(G)$ is the classical commuting probability (a.k.a.\ commutativity degree).
For $r=3$, $P_3(G)$ is the probability that three random elements generate an abelian subgroup.
\end{remark}

\begin{remark}[Relation to multiple commutativity degree]\label{rem:multiple-degree}
In the notation of Lescot \cite{Lescot1995} and Rezaei--Niroomand--Erfanian \cite{RezaeiNiroomandErfanian2014},
the \emph{multiple commutativity degree} $d_n(G)$ is the probability that $n{+}1$ uniform elements of $G$ commute pairwise.
Thus $P_r(G)=d_{r-1}(G)$ for every $r\ge 2$.
\end{remark}

\subsection{Isoclinism and invariance of \texorpdfstring{$P_r$}{Pr}}\label{subsec:isoclinism}

It is natural to regard the family $\{P_r(G)\}_{r\ge 2}$ as a ``commutator-geometry'' invariant of $G$.
This is made precise by \emph{isoclinism}, introduced by P.~Hall \cite{Hall1940} and used systematically by
Lescot in the study of commuting probabilities \cite{Lescot1995}.

\begin{definition}[Isoclinism \cite{Hall1940}]\label{def:isoclinism}
Two groups $G,H$ are \emph{isoclinic} if there exist isomorphisms
\[
\phi:G/\Z(G)\xrightarrow{\ \sim\ } H/\Z(H),
\qquad
\psi:G'\xrightarrow{\ \sim\ } H',
\]
such that the commutator maps are intertwined:
\[
\psi\big([g_1,g_2]\big)=\big[\widehat{\phi}(g_1\Z(G)),\,\widehat{\phi}(g_2\Z(G))\big]
\quad\text{for all }g_1,g_2\in G,
\]
where $\widehat{\phi}(g\Z(G))$ denotes any lift of $\phi(g\Z(G))$ to $H$.
Equivalently, if we write
\[
\kappa_G:(G/\Z(G))^2\to G',\qquad (g\Z(G),h\Z(G))\mapsto [g,h],
\]
(and similarly $\kappa_H$), then $\psi\circ \kappa_G=\kappa_H\circ(\phi\times\phi)$.
\end{definition}

\begin{proposition}[Isoclinism invariance of higher commuting probabilities]\label{prop:isoclinism-invariant}
If $G$ and $H$ are isoclinic finite groups, then
\[
P_r(G)=P_r(H)\qquad\text{for all }r\ge 2.
\]
In particular, the entire sequence $\{P_r(G)\}_{r\ge 2}$ depends only on the isoclinism class of $G$
(a result already emphasized by Lescot \cite{Lescot1995} in the language of multiple commutativity degrees).
\end{proposition}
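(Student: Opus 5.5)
The approach is to pass from tuples in $G$ to tuples in the central quotient $\bar G:=G/\Z(G)$, where isoclinism supplies an isomorphism. The key observation is that whether $(x_1,\dots,x_r)\in G^r$ lies in $\Comm_r(G)$ depends only on the cosets $x_i\Z(G)$. Indeed, $[x_iz,x_jz']=[x_i,x_j]$ for central $z,z'$. Let $\kappa_G$ be the commutator map of Definition~\ref{def:isoclinism}, and call an $r$-tuple $(\bar x_1,\dots,\bar x_r)\in\bar G^{\,r}$ \emph{$\kappa_G$-commuting} if $\kappa_G(\bar x_i,\bar x_j)=1$ for all $i,j$. Writing $c_r(G)$ for the number of $\kappa_G$-commuting $r$-tuples in $\bar G^{\,r}$, the fibres of $G^r\to\bar G^{\,r}$ give
\[
|\Comm_r(G)|=|\Z(G)|^r\,c_r(G),
\qquad\text{hence}\qquad
P_r(G)=\frac{c_r(G)}{|G/\Z(G)|^r}.
\]

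Next, take $\phi,\psi$ as in Definition~\ref{def:isoclinism}. Since $\psi$ is an isomorphism, $\psi(g)=1$ exactly when $g=1$. The relation $\psi\circ\kappa_G=\kappa_H\circ(\phi\times\phi)$ then shows that $(\bar x_i)$ is $\kappa_G$-commuting if and only if $(\phi(\bar x_i))$ is $\kappa_H$-commuting. Applying $\phi$ coordinatewise therefore gives a bijection between the $\kappa_G$-commuting tuples of $\bar G^{\,r}$ and the $\kappa_H$-commuting tuples of $\bar H^{\,r}$, so $c_r(G)=c_r(H)$. Since $|G/\Z(G)|=|H/\Z(H)|$, the formula of the first paragraph yields $P_r(G)=P_r(H)$ for every $r\ge 2$.

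There is no genuine obstacle here. The only points needing care are that $\kappa_G$ is well defined on $\bar G\times\bar G$, i.e.\ the commutator is independent of the chosen coset representatives, and that the count uses exactly $|\Z(G)|^r$ lifts per tuple. I will verify the first with the identity $[xz,y]=[x,y]$ for $z\in\Z(G)$, and the second is immediate because the fibres of $G^r\to\bar G^{\,r}$ are cosets of $\Z(G)^r$.
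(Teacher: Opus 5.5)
Your proof is correct and follows the paper's argument essentially step for step. Like the paper, you reduce to the central quotient with exactly $|\Z(G)|^r$ lifts per commuting tuple of cosets, then use the intertwining relation $\psi\circ\kappa_G=\kappa_H\circ(\phi\times\phi)$, with $\psi$ injective, to see that $\phi$ applied coordinatewise bijects the commuting coset tuples of $G/\Z(G)$ onto those of $H/\Z(H)$.
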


\begin{proof}
Write $V_G:=G/\Z(G)$ and $W_G:=G'$.
For $r\ge 2$ consider the map
\[
\Phi_r^G:V_G^r\to W_G^{\binom r2},\qquad
(v_1,\dots,v_r)\mapsto \big(\kappa_G(v_i,v_j)\big)_{1\le i<j\le r}.
\]
The condition that $(x_1,\dots,x_r)\in G^r$ commute pairwise depends only on the cosets $\bar x_i:=x_i\Z(G)\in V_G$ and is equivalent to the condition
\[\Phi_r^G(\bar x_1,\dots,\bar x_r)=1.\]
Moreover, once a commuting $r$-tuple of cosets is fixed, there are exactly $|\Z(G)|^r$ lifts to commuting
$r$-tuples in $G^r$ (multiplying any coordinate by a central element preserves commutativity).
Hence
\[
|\Comm_r(G)|=|\Z(G)|^r\cdot |\ker(\Phi_r^G)|,
\qquad\text{so}\qquad
P_r(G)=\frac{|\ker(\Phi_r^G)|}{|V_G|^r}.
\]

Now suppose $G$ and $H$ are isoclinic via $(\phi,\psi)$ as in Definition~\ref{def:isoclinism}.
Then the intertwining condition implies that $\phi^r:V_G^r\to V_H^r$ carries
$\ker(\Phi_r^G)$ bijectively onto $\ker(\Phi_r^H)$.
Since $\phi$ is an isomorphism, $|V_G|=|V_H|$ and $|\ker(\Phi_r^G)|=|\ker(\Phi_r^H)|$.
Thus $P_r(G)=P_r(H)$.
\end{proof}

\begin{proposition}[Maximal and maximum abelian subgroups under isoclinism]\label{prop:isoclinism-max-abelian}
Let $G,H$ be isoclinic finite groups via $(\phi,\psi)$ as in Definition~\ref{def:isoclinism}, and let
$\pi_G:G\to G/\Z(G)$ and $\pi_H:H\to H/\Z(H)$ be the quotient maps.
For every subgroup $A$ with $\Z(G)\le A\le G$ define
\[
A^\phi:=\pi_H^{-1}\bigl(\phi(\pi_G(A))\bigr)\le H.
\]
Then $A\mapsto A^\phi$ is an inclusion-preserving bijection between the subgroups of $G$ containing $\Z(G)$
and the subgroups of $H$ containing $\Z(H)$, and it preserves quotient sizes:
\[
|A:\Z(G)|=|A^\phi:\Z(H)|.
\]
Moreover, $A$ is abelian if and only if $A^\phi$ is abelian.

Consequently, $G$ and $H$ have the same number of:
\begin{enumerate}
\item maximal abelian subgroups (maximal by inclusion);
\item abelian subgroups of maximum order.
\end{enumerate}
In particular, if $A$ is abelian of maximum order in $G$ and $B=A^\phi$, then $B$ is abelian of maximum
order in $H$ and $|G:A|=|H:B|$.
\end{proposition}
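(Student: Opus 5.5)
The plan is to handle the lattice bijection with the correspondence theorem, and then reduce "abelian" to a condition on images in $G/\Z(G)$ that the intertwining identity transports from $G$ to $H$.

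\emph{Step 1 (bijection and sizes).} By the correspondence theorem, $A\mapsto \pi_G(A)$ is an inclusion-preserving bijection from subgroups of $G$ containing $\Z(G)$ onto subgroups of $V_G=G/\Z(G)$. Since $\phi$ is an isomorphism, it induces an inclusion-preserving bijection from subgroups of $V_G$ onto subgroups of $V_H$. Composing with $\pi_H^{-1}$ then gives the map $A\mapsto A^\phi$. Its inverse is the analogous map built from $\phi^{-1}$, so $A\mapsto A^\phi$ is an inclusion-preserving bijection. Sizes are preserved because
\[
|A:\Z(G)|=|\pi_G(A)|=|\phi(\pi_G(A))|=|A^\phi:\Z(H)|.
\]

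\emph{Step 2 (abelianness).} If $\Z(G)\le A$, then $A$ is abelian exactly when $\kappa_G(u,v)=1$ for all $u,v\in\pi_G(A)$, because a commutator depends only on cosets mod the centre. The relation $\psi\circ\kappa_G=\kappa_H\circ(\phi\times\phi)$ holds and $\psi$ is injective. Hence this vanishing is equivalent to $\kappa_H$ vanishing on $\phi(\pi_G(A))=\pi_H(A^\phi)$, which says precisely that $A^\phi$ is abelian.

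\emph{Step 3 (the counting consequences).} This is the main point requiring care, because the correspondence only sees subgroups containing the centre. The key observation is that every maximal abelian subgroup $M$ satisfies $\Z(G)\le M$, since $M\Z(G)$ is abelian and contains $M$. Likewise, every abelian subgroup $M$ of maximum order contains $\Z(G)$: the product $M\Z(G)$ is abelian, and $|M\Z(G)|\ge|M|$ with equality iff $\Z(G)\le M$, so maximality of $|M|$ forces equality.

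Therefore the maximal abelian subgroups of $G$ are exactly the inclusion-maximal elements among abelian subgroups containing $\Z(G)$. The abelian subgroups of maximum order are exactly those abelian subgroups containing $\Z(G)$ that maximize $|A:\Z(G)|$; this holds because $|A|=|\Z(G)|\,|A:\Z(G)|$ and $|\Z(G)|$ is fixed.

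By Steps 1--2 the bijection restricts to an inclusion-preserving bijection between abelian subgroups containing the respective centres. It also preserves the index over the centre. So it carries inclusion-maximal ones to inclusion-maximal ones, and maximizers of the index to maximizers of the index. This yields items 1 and 2.

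\emph{Step 4 (equality of indices).} For $B=A^\phi$ with $A$ abelian of maximum order, Step 3 shows $B$ is abelian of maximum order. Since $|V_G|=|V_H|$, we get
\[
|G:A|=\frac{|G:\Z(G)|}{|A:\Z(G)|}=\frac{|H:\Z(H)|}{|B:\Z(H)|}=|H:B|.
\]
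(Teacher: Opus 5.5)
Your proof is correct and follows essentially the same route as the paper: the correspondence through $\phi$, abelianness transported via the intertwining identity, the observation that maximal and maximum-order abelian subgroups contain the centre, and the final index computation. You also invoke the injectivity of $\psi$ explicitly for the converse direction in Step~2, which makes precise a point the paper leaves implicit.
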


\begin{proof}
Since $\phi$ is an isomorphism, $A\mapsto A^\phi$ is a bijection between subgroups containing the centers,
with inverse given by $\phi^{-1}$; it is inclusion-preserving and
$|A/\Z(G)|=|A^\phi/\Z(H)|$.
To check abelianness, note that $A$ is abelian if and only if the commutator map $\kappa_G$ from
Definition~\ref{def:isoclinism} vanishes on $\pi_G(A)\times \pi_G(A)$.
By isoclinism, $\kappa_H$ vanishes on $\phi(\pi_G(A))\times \phi(\pi_G(A))$, which is equivalent to $A^\phi$ being abelian.

Every maximal abelian subgroup contains the center: if $A$ is abelian and $z\in \Z(G)\setminus A$, then $\langle A,z\rangle$ is abelian and strictly larger.
Hence maximality by inclusion is preserved by the inclusion-preserving bijection.

Similarly, every abelian subgroup of maximum order contains the center, and maximizing $|A|$ is equivalent to maximizing
$|A/\Z(G)|$ among abelian subgroups containing $\Z(G)$; this is preserved by the size-preserving bijection.
Finally, if $A$ corresponds to $B$, then
\[
|G:A|=\frac{|G/\Z(G)|}{|A/\Z(G)|}=\frac{|H/\Z(H)|}{|B/\Z(H)|}=|H:B|.
\]
\end{proof}

\begin{proposition}[Maximum-order abelian subgroups and higher commuting probabilities: the sharp intersection case]\label{prop:maxabelian-Pr-intersection}
Let $G$ be a finite group. Let $\mathcal M(G)$ be the set of maximal abelian subgroups of $G$, and set
\[
M(G):=|\mathcal M(G)|,\qquad z:=|Z(G)|.
\]
Let $\mathcal A_{\max}(G)$ be the set of maximum-order abelian subgroups of $G$.
Assume that distinct members of $\mathcal A_{\max}(G)$ intersect
exactly in the center:
\[
A,B\in\mathcal A_{\max}(G),\ A\neq B\quad\Longrightarrow\quad A\cap B=Z(G).
\]
Then for every $r\ge 1$,
\begin{equation}\label{eq:Pr-maxabelian-sandwich}
\frac{N_{\max}(G)\,m(G)^r-\bigl(N_{\max}(G)-1\bigr)z^r}{|G|^r}\ \le\ P_r(G)\ \le\ \frac{M(G)\,m(G)^r}{|G|^r}.
\end{equation}
\end{proposition}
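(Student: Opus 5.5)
Both bounds will come from viewing a commuting $r$-tuple as an $r$-tuple whose entries lie in a common abelian subgroup. Indeed, $(x_1,\dots,x_r)\in\Comm_r(G)$ if and only if $\langle x_1,\dots,x_r\rangle$ is abelian, and this holds if and only if all $x_i$ lie in some abelian subgroup, hence in some maximal abelian subgroup. This gives the two set-theoretic relations
\[
\bigcup_{A\in\mathcal A_{\max}(G)} A^r\ \subseteq\ \Comm_r(G)\ =\ \bigcup_{A\in\mathcal M(G)} A^r .
\]
Dividing by $|G|^r$, the upper bound will follow from the union bound together with $|A|\le m(G)$ for every abelian $A$:
\[
|\Comm_r(G)|\le\sum_{A\in\mathcal M(G)}|A|^r\le M(G)\,m(G)^r .
\]
This step uses no hypothesis. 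Note that $\mathcal A_{\max}(G)\subseteq\mathcal M(G)$, since a maximum-order abelian subgroup is maximal by inclusion.

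For the lower bound I will compute the size of the union $U:=\bigcup_{A\in\mathcal A_{\max}(G)}A^r$ exactly, using the intersection hypothesis. For distinct $A,B\in\mathcal A_{\max}(G)$ we have $A^r\cap B^r=(A\cap B)^r=Z(G)^r$. Moreover, every maximum-order abelian subgroup contains $Z(G)$, as noted in the proof of Proposition~\ref{prop:isoclinism-max-abelian}. Hence the sets $A^r\setminus Z(G)^r$ for $A\in\mathcal A_{\max}(G)$ are pairwise disjoint, and all of them lie in $U$. Writing $N=N_{\max}(G)$, this gives
\[
|U|=z^r+\sum_{A\in\mathcal A_{\max}(G)}\bigl(m(G)^r-z^r\bigr)=N\,m(G)^r-(N-1)z^r .
\]
Since $U\subseteq\Comm_r(G)$, dividing by $|G|^r$ yields the left inequality of \eqref{eq:Pr-maxabelian-sandwich}.

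The only point needing care is the disjointness of the sets $A^r\setminus Z(G)^r$. Take a tuple lying in both $A^r$ and $B^r$ with $A\ne B$. Each coordinate then lies in $A\cap B=Z(G)$, so the tuple lies in $Z(G)^r$. This settles disjointness. For $r=1$ the same argument gives $|\bigcup A|$ exactly, so no separate case is needed.

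The degenerate case $G$ abelian also needs checking. There $\mathcal A_{\max}=\{G\}$, so $N=1$, and both bounds become $1\le P_r\le 1$, which is consistent. I expect no real obstacle: the proof is inclusion--exclusion that collapses because of the sharp intersection hypothesis.
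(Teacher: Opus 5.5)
Your proposal is correct and follows the paper's proof. The upper bound is the same union bound over $\mathcal M(G)$. For the lower bound, your decomposition of $\bigcup_{A\in\mathcal A_{\max}(G)}A^r$ into $Z(G)^r$ and the pairwise disjoint sets $A^r\setminus Z(G)^r$ is just the collapsed form of the paper's inclusion--exclusion, in which every multiple intersection equals $Z(G)^r$.
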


\begin{proof}
Every commuting $r$-tuple lies in some maximal abelian subgroup, hence
\[
\Comm_r(G)\subseteq \bigcup_{A\in\mathcal M(G)}A^r,
\]
so $|\Comm_r(G)|\le \sum_{A\in\mathcal M(G)}|A|^r\le M(G)\,m(G)^r$, giving the upper bound in \eqref{eq:Pr-maxabelian-sandwich}.
For the lower bound, note that $\bigcup_{A\in\mathcal A_{\max}(G)}A^r\subseteq \Comm_r(G)$.
Under the hypothesis $A\cap B=Z(G)$ for distinct $A,B\in\mathcal A_{\max}(G)$, the union has size
\[
\Bigl|\bigcup_{A\in\mathcal A_{\max}(G)}A^r\Bigr|
= N_{\max}(G)\,m(G)^r-\bigl(N_{\max}(G)-1\bigr)z^r,
\]
since all multiple intersections coincide with $Z(G)^r$.
Dividing by $|G|^r$ yields the lower bound.
\end{proof}

\begin{corollary}[Extraspecial $p$-groups: maximum-order abelian incidence bounds $P_r$]\label{cor:extraspecial-maxabelian}
Let $G$ be an extraspecial $p$-group of order $|G|=p^{2n+1}$, so $Z(G)=G'$ has order $p$ and
$V:=G/Z(G)$ is an $\mathbb{F}_p$-vector space of dimension $2n$ equipped with the induced nondegenerate alternating form.
Then every abelian subgroup of maximum order has order
\[
m(G)=p^{n+1},
\]
and the set $\mathcal A_{\max}(G)$ of abelian subgroups of order $m(G)$ coincides with the set of maximal abelian
subgroups.
Moreover, $\mathcal A_{\max}(G)$ is in bijection with the set of Lagrangian (maximal totally isotropic) subspaces of $V$, hence
\[
N_{\max}(G)=M(G)=\prod_{i=1}^{n}(p^{i}+1).
\]
Finally, if $A\neq B$ are maximal abelian in $G$, then
\[
|A\cap B|\le p^{n}.
\]

Consequently, for every $r\ge 1$,
\[
\frac{N_{\max}(G)\,p^{(n+1)r}-\binom{N_{\max}(G)}{2}p^{nr}}{p^{(2n+1)r}}
\ \le\ P_r(G)\ \le\
\frac{N_{\max}(G)\,p^{(n+1)r}}{p^{(2n+1)r}},
\]
and in particular
\[
P_r(G)=N_{\max}(G)\,p^{-nr}+O\!\left(p^{-(n+1)r}\right)\qquad (r\to\infty).
\]
\end{corollary}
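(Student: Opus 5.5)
We work throughout in $V=G/Z(G)\cong\mathbb F_p^{2n}$ with the nondegenerate alternating form $\omega(\bar x,\bar y)$ defined by $[x,y]=\omega(\bar x,\bar y)$, where $G'=Z(G)\cong\mathbb F_p$. The key dictionary is that a subgroup $Z(G)\le A\le G$ is abelian if and only if $\pi(A)\le V$ is totally isotropic.

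\emph{Step 1: $m(G)$ and the maximal abelian subgroups.} Every maximal abelian subgroup contains $Z(G)$, as shown in the proof of Proposition~\ref{prop:isoclinism-max-abelian}. So maximal abelian subgroups correspond bijectively to maximal totally isotropic subspaces of $V$. By Witt's theorem, all maximal totally isotropic subspaces of a nondegenerate symplectic space of dimension $2n$ have dimension $n$. Hence every maximal abelian subgroup has order $p\cdot p^n=p^{n+1}$. This gives $m(G)=p^{n+1}$ and $\mathcal A_{\max}(G)=\mathcal M(G)$, with both in bijection with the Lagrangians.

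\emph{Step 2: counting Lagrangians.} Count pairs $(L,\ell)$ with $\ell$ a line in the Lagrangian $L$. Every line in $V$ is isotropic, and the Lagrangians containing $\ell$ correspond to the Lagrangians of $\ell^\perp/\ell$, a symplectic space of dimension $2n-2$. Writing $\Lambda_n$ for the number of Lagrangians in dimension $2n$, this gives
\[
\Lambda_n\cdot\frac{p^n-1}{p-1}=\frac{p^{2n}-1}{p-1}\,\Lambda_{n-1},
\]
so $\Lambda_n=(p^n+1)\Lambda_{n-1}$. With $\Lambda_0=1$, induction yields $\prod_{i=1}^n(p^i+1)$.

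\emph{Step 3: intersections.} If $A\ne B$ are maximal abelian, then $\pi(A)\ne\pi(B)$ are distinct $n$-dimensional subspaces. Their intersection therefore has dimension at most $n-1$, so $|A\cap B|\le p\cdot p^{n-1}=p^n$.

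\emph{Step 4: the bounds.} The upper bound is the upper half of Proposition~\ref{prop:maxabelian-Pr-intersection}, whose proof uses no intersection hypothesis, applied with $M(G)=N_{\max}(G)$ and $m(G)=p^{n+1}$. For the lower bound, $\Comm_r(G)\supseteq\bigcup_{A\in\mathcal A_{\max}}A^r$. Bonferroni's inequality then gives
\[
|\Comm_r(G)|\ \ge\ \sum_A|A|^r-\sum_{A\ne B}|A\cap B|^r\ \ge\ N_{\max}\,p^{(n+1)r}-\binom{N_{\max}}{2}p^{nr},
\]
using Step 3. Divide both bounds by $p^{(2n+1)r}$. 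The two sides then differ by $\binom{N_{\max}}{2}p^{nr}/p^{(2n+1)r}=O(p^{-(n+1)r})$, which gives the stated asymptotic.

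The only step needing real input is the symplectic linear algebra: the equidimensionality of maximal isotropic subspaces (Witt) and the Lagrangian count. Here I would use the recursion via $\ell^\perp/\ell$ rather than a direct orbit–stabilizer count in $\mathrm{Sp}_{2n}(p)$.
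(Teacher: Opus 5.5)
Your proof is correct, and its overall structure is the paper's. Both use the dictionary between abelian subgroups containing $Z(G)$ and totally isotropic subspaces, the bound $\dim(\pi(A)\cap\pi(B))\le n-1$ for distinct Lagrangians, the union bound over maximal abelian subgroups for the upper estimate, and second-order Bonferroni for the lower one.

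The one genuine difference is how you count Lagrangians. The paper uses orbit--stabilizer. $\mathrm{Sp}_{2n}(p)$ acts transitively on Lagrangians, the stabilizer is the maximal parabolic of order $p^{n(n+1)/2}|\mathrm{GL}_n(p)|$, and dividing this into $|\mathrm{Sp}_{2n}(p)|=p^{n^2}\prod_{i=1}^n(p^{2i}-1)$ gives $\prod_{i=1}^n(p^i+1)$. That is a one-line computation, but it imports three external facts:
\begin{itemize}
\item transitivity on Lagrangians, which is itself Witt's extension theorem;
\item the structure and order of the stabilizer;
\item the order of the symplectic group.
\end{itemize}
Your double count of pairs $(L,\ell)$, with Lagrangians through $\ell$ identified with Lagrangians of $\ell^\perp/\ell$, needs only two facts: every line is isotropic, and $\ell^\perp/\ell$ is nondegenerate of dimension $2n-2$. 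So it is fully self-contained linear algebra, at the cost of an induction on $n$. You also make explicit that maximal totally isotropic subspaces all have dimension $n$. The paper uses this tacitly when it identifies maximal abelian subgroups with Lagrangians and reads off their order $p^{n+1}$.

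One small point to tighten: in the Bonferroni step the second sum must run over unordered pairs $\{A,B\}$; the paper writes $A<B$. Read over ordered pairs $A\ne B$, that sum has $N_{\max}(N_{\max}-1)$ terms, and the bound with $\binom{N_{\max}}{2}p^{nr}$ would no longer follow. For $n=1$, for instance, every pairwise intersection has order exactly $p^n$.
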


\begin{proof}
It is standard that in an extraspecial $p$-group one has $Z(G)=G'$ of order $p$ and $G/Z(G)$ is elementary abelian of
order $p^{2n}$, with commutator inducing a nondegenerate alternating form on $V$.
Abelian subgroups containing $Z(G)$ correspond to totally isotropic subspaces of $V$, and maximal abelian subgroups
correspond to Lagrangian subspaces. In particular, maximal abelian subgroups have order $p^{n+1}$.

The symplectic group $\mathrm{Sp}_{2n}(p)$ acts transitively on the Lagrangian subspaces of $V$.
The stabilizer of one Lagrangian is the corresponding maximal parabolic subgroup, of order
$p^{n(n+1)/2}|\mathrm{GL}_n(p)|=p^{n^2}\prod_{i=1}^{n}(p^i-1)$, while
$|\mathrm{Sp}_{2n}(p)|=p^{n^2}\prod_{i=1}^{n}(p^{2i}-1)$.
Hence the number of Lagrangian subspaces is
\[
\frac{|\mathrm{Sp}_{2n}(p)|}{p^{n(n+1)/2}|\mathrm{GL}_n(p)|}
=\prod_{i=1}^{n}\frac{p^{2i}-1}{p^i-1}
=\prod_{i=1}^{n}(p^{i}+1),
\]
and therefore $N_{\max}(G)=M(G)$.
If $A\neq B$ are maximal abelian then $A/Z(G)$ and $B/Z(G)$ are distinct Lagrangians, so
$\dim(A\cap B)/Z(G)\le n-1$ and therefore $|A\cap B|\le p^{n}$.

For the upper bound, every commuting $r$-tuple lies in some maximal abelian subgroup, so
$|\Comm_r(G)|\le \sum_{A\in\mathcal A_{\max}(G)}|A|^r=N_{\max}(G)\,p^{(n+1)r}$.
For the lower bound, the union $\bigcup_{A\in\mathcal A_{\max}(G)}A^r$ is contained in $\Comm_r(G)$.
By the (second) Bonferroni inequality,
\[
\Bigl|\bigcup_{A\in\mathcal A_{\max}(G)}A^r\Bigr|\ge
\sum_{A\in\mathcal A_{\max}(G)}|A|^r-\!\!\sum_{\substack{A,B\in\mathcal A_{\max}(G)\\A<B}}\!|(A\cap B)|^r
\ \ge\ N_{\max}(G)\,p^{(n+1)r}-\binom{N_{\max}(G)}{2}p^{nr}.
\]
Dividing by $|G|^r=p^{(2n+1)r}$ yields the displayed bounds. The asymptotic follows since the error term is
$O(p^{-(n+1)r})$ as $r\to\infty$.
\end{proof}

\begin{proposition}[Monotonicity under quotients]\label{prop:quotient-monotone}
Let $N\trianglelefteq G$ be a normal subgroup. Then for every $r\ge 1$,
\[
P_r(G/N)\ \ge\ P_r(G).
\]
Equivalently, passing to a quotient can only increase the probability that $r$ random elements commute.
\end{proposition}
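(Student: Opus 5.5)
The plan is a fibre-counting argument over the quotient map $\pi:G\to \bar G:=G/N$. The key observation is that $\pi^r$ sends commuting tuples to commuting tuples. So every $(x_1,\dots,x_r)\in\Comm_r(G)$ maps into $\Comm_r(G/N)$, and this gives
\[
\Comm_r(G)\subseteq (\pi^r)^{-1}\bigl(\Comm_r(G/N)\bigr).
\]

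Next I count the preimage. Each fibre of $\pi^r:G^r\to\bar G^r$ has exactly $|N|^r$ elements. Hence
\[
\bigl|(\pi^r)^{-1}(\Comm_r(G/N))\bigr| = |N|^r\,|\Comm_r(G/N)|.
\]
Combining this with the inclusion gives $|\Comm_r(G)|\le |N|^r|\Comm_r(G/N)|$. Dividing by $|G|^r=|N|^r|G/N|^r$ yields
\[
P_r(G)=\frac{|\Comm_r(G)|}{|G|^r}\le \frac{|\Comm_r(G/N)|}{|G/N|^r}=P_r(G/N).
\]
For $r=1$ both sides equal $1$, so that case is trivially consistent.

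The "equivalently" clause in the statement is just the probabilistic reading of the same inequality. If $x_1,\dots,x_r$ are uniform in $G$, then their images are uniform in $G/N$, and the event that the $x_i$ commute is contained in the event that their images commute.

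There is no real obstacle here. The only point to check is that the uniform measure pushes forward to the uniform measure under $\pi^r$. This holds because all fibres have the same size $|N|^r$, which is what the counting step above uses.
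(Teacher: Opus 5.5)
Your proposal is correct and is essentially the paper's argument. The paper phrases it probabilistically: a uniform tuple in $G^r$ pushes forward to a uniform tuple in $(G/N)^r$, and $P_r(G/N)=\Prob([x_i,x_j]\in N\ \forall i,j)\ge \Prob([x_i,x_j]=1\ \forall i,j)=P_r(G)$. That is the same as your fibre count $|(\pi^r)^{-1}(\Comm_r(G/N))|=|N|^r\,|\Comm_r(G/N)|$, written as probabilities instead of counts.
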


\begin{proof}
Let $\pi:G\to G/N$ be the quotient map.
A commuting $r$-tuple in $G$ maps to a commuting $r$-tuple in $G/N$.
Choose $(x_1,\dots,x_r)\in G^r$ uniformly at random.
The induced tuple in $(G/N)^r$ is again uniform, and
\[
\begin{aligned}
P_r(G/N)
&=\Prob([\pi(x_i),\pi(x_j)]=1\ \forall i,j)\\
&=\Prob([x_i,x_j]\in N\ \forall i,j)\\
&\ge \Prob([x_i,x_j]=1\ \forall i,j)=P_r(G).
\end{aligned}
\]
\end{proof}

\begin{remark}\label{rem:kappa-not-isoclinism}
In contrast, the unnormalized orbit counts $\kappa_r(G)=|G|^rP_{r+1}(G)$ are generally \emph{not}
isoclinism invariants, since isoclinic groups can have different center sizes and hence different orders.
\end{remark}

\begin{proposition}[Monotonicity in $r$]\label{prop:monotone}
For every finite group $G$ and every $r\ge 2$,
\[
P_{r+1}(G)\le P_r(G).
\]
Moreover, if $G$ is non-abelian, then the inequality is strict for all $r\ge 2$.
\end{proposition}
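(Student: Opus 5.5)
The natural approach is the centralizer recursion. Every commuting $(r+1)$-tuple is a commuting $r$-tuple $(x_1,\dots,x_r)$ together with an element $x_{r+1}$ of the common centralizer $\C_G(x_1,\dots,x_r)=\bigcap_i \C_G(x_i)$. Hence
\[
|\Comm_{r+1}(G)|=\sum_{(x_1,\dots,x_r)\in\Comm_r(G)}\bigl|\C_G(x_1,\dots,x_r)\bigr|\le |G|\,|\Comm_r(G)|,
\]
and dividing by $|G|^{r+1}$ gives $P_{r+1}(G)\le P_r(G)$. Probabilistically, this is just the statement that the event ``$x_1,\dots,x_{r+1}$ commute pairwise'' is contained in the event ``$x_1,\dots,x_r$ commute pairwise'', and $x_{r+1}$ is independent and uniform. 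The inequality therefore needs only the bound $|\C_G(\cdot)|\le |G|$.

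For strictness, equality holds if and only if every commuting $r$-tuple satisfies $\C_G(x_1,\dots,x_r)=G$, that is, every commuting $r$-tuple consists of central elements. So for non-abelian $G$ it suffices to exhibit one commuting $r$-tuple with a non-central entry. Take $g\in G\setminus\Z(G)$, which exists because $G$ is non-abelian, and form the tuple $(g,1,\dots,1)$, or $(g,g,\dots,g)$. It is commuting for every $r\ge 1$, and its common centralizer is $\C_G(g)\subsetneq G$. That summand is then strictly less than $|G|$ while every other summand is at most $|G|$, so the sum is strictly less than $|G|\,|\Comm_r(G)|$. This gives $P_{r+1}(G)<P_r(G)$ for all $r\ge 2$, and indeed already for $r=1$.

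The argument has no real obstacle. The one step needing care is the equality analysis, specifically that a single strictly smaller summand forces strict inequality. That follows because all summands are bounded by $|G|$ and the sum is over a nonempty finite set.
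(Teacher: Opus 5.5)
Your proof is correct and is essentially the paper's argument, written as a count rather than in probabilistic language. The paper also gets the inequality from the containment of the commuting events. For strictness it uses the same commuting tuple $(a,1,\dots,1)$ with $a\notin\Z(G)$, and its deficit $1-|\C_G(a)|/|G|>0$ is exactly your strictly smaller summand $|\C_G(a)|<|G|$.
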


\begin{proof}
Choose $(x_1,\dots,x_{r+1})\in G^{r+1}$ uniformly at random.
The event ``$(x_1,\dots,x_{r+1})$ commute pairwise'' implies the event
``$(x_1,\dots,x_r)$ commute pairwise'', hence $P_{r+1}(G)\le P_r(G)$.

Assume $G$ is non-abelian and pick $a\in G\setminus \Z(G)$.
Let $E$ be the event that $(x_1,\dots,x_r)=(a,1,\dots,1)$.
Then $E\subseteq \Comm_r(G)$ and $\Prob(E)=|G|^{-r}>0$.
Conditioned on $E$, the probability that $x_{r+1}$ commutes with all previous coordinates is
$|\C_G(a)|/|G|<1$.
Therefore
\[
P_{r+1}(G)=\Prob((x_1,\dots,x_{r+1})\in\Comm_{r+1}(G))
\le P_r(G)-\Prob(E)\Bigl(1-\frac{|\C_G(a)|}{|G|}\Bigr)
< P_r(G),
\]
so the inequality is strict.
\end{proof}

\begin{remark}[Unimodality is automatic]\label{rem:unimodality}
By Proposition~\ref{prop:monotone}, the sequence $\{P_r(G)\}_{r\ge 1}$ is weakly decreasing, hence unimodal, with its mode at $r=1$.
If $G$ is non-abelian, then it is strictly decreasing.
Equivalently, the shifted sequence $d_n(G)=P_{n+1}(G)$ is always unimodal.
In particular, every truncation
\[
\sum_{r=2}^{N} P_r(G)\,z^{r-2}
\]
has unimodal coefficients.
Thus the genuinely delicate shape question is not unimodality but true log-concavity.
\end{remark}

\begin{proposition}[Direct products]\label{prop:direct}
If $G,H$ are finite groups, then for every $r\ge 1$,
\[
P_r(G\times H)=P_r(G)\,P_r(H).
\]
\end{proposition}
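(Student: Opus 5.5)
The plan is to show that commuting tuples in a direct product split coordinatewise, and then count. Write an $r$-tuple in $(G\times H)^r$ as $\bigl((g_1,h_1),\dots,(g_r,h_r)\bigr)$. Commutators in $G\times H$ are computed componentwise:
\[
[(g_i,h_i),(g_j,h_j)]=\bigl([g_i,g_j],[h_i,h_j]\bigr).
\]
This commutator is trivial if and only if both $[g_i,g_j]=1$ and $[h_i,h_j]=1$.

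Consider the rearrangement bijection
\[
(G\times H)^r\to G^r\times H^r,\qquad \bigl((g_i,h_i)\bigr)_{i}\mapsto \bigl((g_i)_i,(h_i)_i\bigr).
\]
By the componentwise commutator identity, this map restricts to a bijection
\[
\Comm_r(G\times H)\;\longrightarrow\;\Comm_r(G)\times\Comm_r(H).
\]
Equivalently, $\Hom(\mathbb Z^r,G\times H)\cong\Hom(\mathbb Z^r,G)\times\Hom(\mathbb Z^r,H)$ by the universal property of the product.

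It follows that $|\Comm_r(G\times H)|=|\Comm_r(G)|\,|\Comm_r(H)|$. Dividing by $|G\times H|^r=|G|^r|H|^r$ gives $P_r(G\times H)=P_r(G)P_r(H)$.

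The only point needing care is the componentwise commutator identity, which is immediate from the definition of the product operation. No real obstacle is expected, and the case $r=1$ is trivially consistent since all three values equal $1$.
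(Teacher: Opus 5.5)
Your proposal is correct and matches the paper's proof. Both observe that pairwise commutation in $G\times H$ holds if and only if it holds in each coordinate, which gives $\Comm_r(G\times H)\cong\Comm_r(G)\times\Comm_r(H)$, and then divide by $|G\times H|^r=|G|^r|H|^r$.
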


\begin{proof}
A tuple in $(G\times H)^r$ commutes pairwise if and only if its $G$-coordinates commute
pairwise and its $H$-coordinates commute pairwise. Thus
$\Comm_r(G\times H)\cong \Comm_r(G)\times \Comm_r(H)$ and the formula follows after dividing
by $|G\times H|^r=|G|^r|H|^r$.
\end{proof}

\begin{proposition}[Submultiplicativity across indices]\label{prop:submult}
For every finite group $G$ and all integers $n,m\ge 1$,
\[
P_{n+m}(G)\le P_n(G)\,P_m(G).
\]
Equivalently, the sequence $a_r:=-\log P_r(G)$ is superadditive: $a_{n+m}\ge a_n+a_m$.

Moreover, if $G$ is non-abelian then the inequality is strict for all $n,m\ge 1$.
Equivalently, equality in $P_{n+m}(G)\le P_n(G)P_m(G)$ for some $n,m\ge 1$ forces $G$ abelian.
\end{proposition}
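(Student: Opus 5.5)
The plan is to count commuting $(n+m)$-tuples by splitting each one into its first $n$ coordinates and its last $m$ coordinates, conditioning on the first block. For a tuple $x=(x_1,\dots,x_n)\in\Comm_n(G)$ write $C(x):=\bigcap_{i=1}^n \C_G(x_i)$ for the centralizer of $\langle x_1,\dots,x_n\rangle$. A tuple $(x_1,\dots,x_n,y_1,\dots,y_m)$ lies in $\Comm_{n+m}(G)$ exactly when three conditions hold: $x\in\Comm_n(G)$, each $y_j$ lies in $C(x)$, and the $y_j$ commute pairwise. This gives the exact fibration identity
\[
|\Comm_{n+m}(G)|=\sum_{x\in\Comm_n(G)}|\Comm_m(C(x))|.
\]

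Next I bound each fibre. The inclusion $C(x)\le G$ gives $\Comm_m(C(x))\subseteq\Comm_m(G)$, so every summand is at most $|\Comm_m(G)|$. Hence
\[
|\Comm_{n+m}(G)|\le|\Comm_n(G)|\,|\Comm_m(G)|.
\]
Dividing by $|G|^{n+m}$ gives $P_{n+m}(G)\le P_n(G)P_m(G)$. Taking $-\log$ turns this into superadditivity of $a_r=-\log P_r(G)$.

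For strictness, assume $G$ is non-abelian and pick $a\in G\setminus\Z(G)$. As in the proof of Proposition~\ref{prop:monotone}, use the tuple $x_0=(a,1,\dots,1)\in\Comm_n(G)$, which is available since $n\ge1$. Then $C(x_0)=\C_G(a)$, which is a proper subgroup of $G$. Since $a$ is not central, there is some $b\notin\C_G(a)$. The tuple $(b,1,\dots,1)$ lies in $\Comm_m(G)$ but not in $\Comm_m(\C_G(a))$, which uses $m\ge1$. So the summand indexed by $x_0$ is strictly smaller than $|\Comm_m(G)|$, while all other summands satisfy the weak bound. The total inequality is therefore strict. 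Conversely, equality for some $n,m\ge 1$ forces $G$ to be abelian.

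There is no real obstacle here. The only step needing care is the bookkeeping in the fibration identity: the last $m$ coordinates must commute with the whole first block, not merely with each other. This is precisely what forces them into $C(x)$, and it is where the strictness witness comes from.
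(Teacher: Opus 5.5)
Your proof is correct and is essentially the paper's argument. Your fibre bound $\Comm_m(C(x))\subseteq\Comm_m(G)$ is the inclusion $\Comm_{n+m}(G)\subseteq\Comm_n(G)\times\Comm_m(G)$, which the paper phrases as independence of the two coordinate-block events, and your strictness witness is the paper's tuple $(a,1,\dots,1,b,1,\dots,1)$ with $[a,b]\neq 1$; the only difference is that you count fibres over the first block while the paper argues probabilistically.
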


\begin{proof}
Let $(x_1,\dots,x_{n+m})$ be uniformly random in $G^{n+m}$.
Let $A$ be the event that $(x_1,\dots,x_n)\in \Comm_n(G)$ and $B$ the event that
$(x_{n+1},\dots,x_{n+m})\in \Comm_m(G)$.
Then $\Comm_{n+m}(G)\subseteq A\cap B$.
Since $A$ depends only on the first $n$ coordinates and $B$ only on the last $m$, the events $A$ and $B$
are independent. Hence
\[
P_{n+m}(G)=\Prob(\Comm_{n+m}(G))\le \Prob(A\cap B)=\Prob(A)\Prob(B)=P_n(G)\,P_m(G).
\]

If $G$ is non-abelian choose $a,b\in G$ with $[a,b]\neq 1$.
Consider the event
\[
E=\{(x_1,\dots,x_{n+m})=(a,1,\dots,1,b,1,\dots,1)\}.
\]
Then $E\subseteq A\cap B$ but $E\cap \Comm_{n+m}(G)=\varnothing$, since $a$ and $b$ do not commute.
Moreover $\Prob(E)=|G|^{-(n+m)}$, so
\[
P_{n+m}(G)=\Prob(\Comm_{n+m}(G))\le \Prob(A\cap B)-\Prob(E)=P_n(G)P_m(G)-|G|^{-(n+m)}<P_n(G)P_m(G).
\]
\end{proof}

\begin{remark}[An asymptotic decay rate]\label{rem:decay-rate}
By Proposition~\ref{prop:submult}, the sequence $a_r:=-\log P_r(G)\in[0,\infty]$ is superadditive, so by
Fekete's lemma the limit
\[
\lim_{r\to\infty}\frac{a_r}{r}=\sup_{r\ge 1}\frac{a_r}{r}
\]
exists. Equivalently, the limit $\lim_{r\to\infty} P_r(G)^{1/r}$ exists and equals
$\inf_{r\ge 1} P_r(G)^{1/r}$.
\end{remark}

\section{Centralizer recursion and orbit counts}

\subsection{A recursion via centralizers}
For $x\in G$, let $\C_G(x)=\{g\in G:gx=xg\}$ denote the centralizer.

\begin{lemma}[Centralizer recursion]\label{lem:recursion}
For every finite group $G$ and every $r\ge 2$,
\begin{equation}\label{Pr-formula}
P_r(G)=\frac{1}{|G|^r}\sum_{x\in G} |\C_G(x)|^{r-1}\,P_{r-1}(\C_G(x)).
\end{equation}
\end{lemma}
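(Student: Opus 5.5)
The plan is to count $\Comm_r(G)$ by conditioning on the first coordinate and then rescale. We write $|\Comm_r(G)|=|G|^rP_r(G)$ and show that
\[
|\Comm_r(G)|=\sum_{x\in G}|\Comm_{r-1}(\C_G(x))|
\]
for $r\ge 2$. Dividing by $|G|^r$ and substituting $|\Comm_{r-1}(\C_G(x))|=|\C_G(x)|^{r-1}P_{r-1}(\C_G(x))$, which is the definition of $P_{r-1}$ applied to the finite group $\C_G(x)$, gives exactly \eqref{Pr-formula}.

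For the displayed identity, we partition $\Comm_r(G)$ according to the value $x_1=x$. We then check that, for fixed $x$, the map $(x,x_2,\dots,x_r)\mapsto(x_2,\dots,x_r)$ is a bijection from the fibre over $x$ onto $\Comm_{r-1}(\C_G(x))$.

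\emph{Forward direction.} If $(x,x_2,\dots,x_r)$ commute pairwise, then each $x_i$ commutes with $x$, so $x_i\in\C_G(x)$. The $x_i$ with $i\ge 2$ also commute pairwise among themselves. Hence the image lies in $\Comm_{r-1}(\C_G(x))$.

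\emph{Reverse direction.} If $x_2,\dots,x_r\in\C_G(x)$ commute pairwise, then prepending $x$ gives a pairwise commuting $r$-tuple in $G$. This is because each $x_i$ commutes with $x$ by membership in $\C_G(x)$, and $x$ commutes with itself.

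Summing the fibre sizes over $x\in G$ yields the identity. The case $r=2$ is included and gives $P_2(G)=|G|^{-2}\sum_x|\C_G(x)|$, since $P_1\equiv1$.

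There is no real obstacle here. The only point needing care is that $\Comm_{r-1}$ and $P_{r-1}$ are taken inside the subgroup $\C_G(x)$, so the normalisation uses $|\C_G(x)|^{r-1}$ and not $|G|^{r-1}$; this is what produces the factor $|\C_G(x)|^{r-1}$ in \eqref{Pr-formula}.
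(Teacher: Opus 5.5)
Your proposal is correct and follows the paper's own proof: you partition $\Comm_r(G)$ by the first coordinate $x$, identify each fibre with $\Comm_{r-1}(\C_G(x))$, and divide by $|G|^r$. Your explicit check of both directions of the fibre bijection and your remark on the $r=2$ case (using $P_1\equiv 1$) add a little detail that the paper leaves implicit.
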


\begin{proof}
Count $\Comm_r(G)$ by the first coordinate $x_1=x$. Given $x\in G$, an $r$-tuple
$(x,x_2,\dots,x_r)$ commutes pairwise if and only if $(x_2,\dots,x_r)\in \Comm_{r-1}(\C_G(x))$.
Hence
\[
|\Comm_r(G)|=\sum_{x\in G} |\Comm_{r-1}(\C_G(x))|
=\sum_{x\in G} |\C_G(x)|^{r-1} P_{r-1}(\C_G(x)).
\]
Then dividing by $|G|^r$, we get \eqref{Pr-formula}.
\end{proof}

\subsection{Orbit-count normalization and integrality}

\begin{definition}[Higher class numbers]\label{def:kappa}
For $r\ge 0$ define
\[
\kappa_r(G):=\big|\Hom(\mathbb Z^r,G)/G\big|,
\]
the number of diagonal conjugacy orbits of commuting $r$-tuples.
By convention, $\kappa_0(G)=1$ for every $G$, and $\kappa_1(G)=k(G)$ is the number of conjugacy classes of $G$.
\end{definition}

\begin{proposition}[Monotonicity of the orbit counts]\label{prop:kappa-monotone}
For every finite group $G$ and every $r\ge 1$,
\[
\kappa_{r+1}(G)\ge \kappa_r(G).
\]
Moreover, if $|G|>1$ then the inequality is strict.
\end{proposition}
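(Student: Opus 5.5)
The natural route is an injection of orbit sets. I will construct a map from $\Hom(\mathbb Z^r,G)/G$ into $\Hom(\mathbb Z^{r+1},G)/G$ that is injective, and then show it misses at least one orbit whenever $|G|>1$.

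\textbf{The injection.} Send the orbit of a commuting tuple $(x_1,\dots,x_r)$ to the orbit of the padded tuple $(x_1,\dots,x_r,1)$.
\begin{itemize}
\item Appending the identity preserves pairwise commutativity.
\item Appending the identity commutes with diagonal conjugation, so the map is well defined on orbits.
\item If $(x_1,\dots,x_r,1)$ and $(y_1,\dots,y_r,1)$ are conjugate by some $g$, then restricting to the first $r$ coordinates shows $(x_1,\dots,x_r)$ and $(y_1,\dots,y_r)$ are conjugate by the same $g$. Hence the map is injective.
\end{itemize}
This already gives $\kappa_{r+1}(G)\ge\kappa_r(G)$.

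An alternative for the inequality alone would use Burnside's identity $\kappa_r(G)=|G|^rP_{r+1}(G)$ together with Lemma~\ref{lem:recursion}. That route is less transparent, so I will use the injection.

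\textbf{Strictness.} Assume $|G|>1$ and pick $a\ne 1$. Consider the orbit of $(1,\dots,1,a)\in\Hom(\mathbb Z^{r+1},G)$; this tuple is commuting since all other entries are $1$. Every tuple in the image of the injection has last coordinate $1$. Conjugation fixes $1$, so every tuple in an image orbit also has last coordinate $1$. The orbit of $(1,\dots,1,a)$ consists of tuples whose last coordinate is a conjugate of $a$, which is never $1$. So this orbit is not in the image, and therefore $\kappa_{r+1}(G)\ge\kappa_r(G)+1$.

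The argument needs $r\ge 1$ only so that the tuples are defined. It also covers $r=0$ with $\kappa_0=1$: then $\kappa_1(G)=k(G)\ge 2$ when $|G|>1$.

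There is no real obstacle here. The only point needing care is that orbits are preserved and separated by the last coordinate, which holds because conjugation fixes the identity.
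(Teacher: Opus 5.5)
Your proof is correct and is essentially the paper's own argument. You pad with the identity to get a $G$-equivariant injection on orbit sets, and you get strictness from the orbit of $(1,\dots,1,h)$ with $h\neq 1$, whose last coordinate is never $1$ while every tuple in the image has last coordinate $1$.
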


\begin{proof}
Consider the map
\[
\iota:\Hom(\mathbb Z^r,G)\to \Hom(\mathbb Z^{r+1},G),\qquad (x_1,\dots,x_r)\mapsto (x_1,\dots,x_r,1).
\]
It is $G$-equivariant for diagonal conjugation, hence induces a map of orbit sets
$\bar\iota:\Hom(\mathbb Z^r,G)/G\to \Hom(\mathbb Z^{r+1},G)/G$.
If $\bar\iota([\mathbf x])=\bar\iota([\mathbf y])$ then there is $g\in G$ with
$gx_i g^{-1}=y_i$ for $1\le i\le r$ (and automatically $g1g^{-1}=1$), so $[\mathbf x]=[\mathbf y]$.
Thus $\bar\iota$ is injective, proving $\kappa_{r+1}(G)\ge \kappa_r(G)$.

If $|G|>1$, pick $h\in G\setminus\{1\}$. The commuting $(r{+}1)$-tuple $(1,\dots,1,h)$ defines a
$G$-orbit in $\Hom(\mathbb Z^{r+1},G)$ that is not in the image of $\bar\iota$ (since the last coordinate
of every tuple in the image equals $1$). Hence the inequality is strict.
\end{proof}

\begin{theorem}[Burnside orbit lemma normalization]\label{thm:burnside}
For every finite group $G$ and every $r\ge 1$,
\[
\kappa_r(G)=|G|^r\,P_{r+1}(G)
\qquad\text{equivalently}\qquad
P_{r+1}(G)=\frac{\kappa_r(G)}{|G|^r}.
\]
In particular, $|G|^r P_{r+1}(G)$ is always an integer.
\end{theorem}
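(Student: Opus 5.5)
The plan is to apply Burnside's orbit-counting lemma to the diagonal conjugation action of $G$ on $X_r:=\Hom(\mathbb Z^r,G)=\Comm_r(G)$. Burnside's lemma gives
\[
\kappa_r(G)=|X_r/G|=\frac{1}{|G|}\sum_{g\in G}\bigl|\mathrm{Fix}_{X_r}(g)\bigr|.
\]
The first step is to identify the fixed-point sets. A commuting tuple $(x_1,\dots,x_r)$ is fixed by $g$ exactly when $gx_ig^{-1}=x_i$ for all $i$. That is the same as saying $(x_1,\dots,x_r,g)$ is a commuting $(r{+}1)$-tuple. Hence
\[
\mathrm{Fix}_{X_r}(g)=\{(x_1,\dots,x_r):(x_1,\dots,x_r,g)\in\Comm_{r+1}(G)\}.
\]

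The second step sums over $g$. The assignment $\bigl(g,(x_1,\dots,x_r)\bigr)\mapsto(x_1,\dots,x_r,g)$ is a bijection from $\bigsqcup_{g\in G}\mathrm{Fix}_{X_r}(g)$ onto $\Comm_{r+1}(G)$. Therefore
\[
\sum_{g}|\mathrm{Fix}(g)|=|\Comm_{r+1}(G)|=|G|^{r+1}P_{r+1}(G).
\]
Dividing by $|G|$ yields $\kappa_r(G)=|G|^rP_{r+1}(G)$. The integrality claim is immediate, since $\kappa_r(G)$ is a cardinality.

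As a cross-check, the same identity can be read off from Lemma~\ref{lem:recursion}. Taking the last coordinate as the distinguished one gives $|\Comm_{r+1}(G)|=\sum_g|\Comm_r(\C_G(g))|$, and $\Comm_r(\C_G(g))$ is precisely $\mathrm{Fix}(g)$.

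There is no real obstacle here. The only point needing care is to state explicitly that fixing a tuple under conjugation is equivalent to $g$ lying in the centralizer of every coordinate, so that the fixed-point set is $\Comm_r(\C_G(g))$. This also covers the case where the tuple must already commute, which is automatic because we work inside $X_r$.
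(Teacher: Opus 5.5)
Your proof is correct and follows essentially the same route as the paper: you apply Burnside's lemma to the diagonal conjugation action, identify the fixed-point set of $g$ with $\Comm_r(\C_G(g))$, and sum over $g$ to obtain $|\Comm_{r+1}(G)|$. The only difference is cosmetic: the paper closes the argument by invoking Lemma~\ref{lem:recursion} with $r+1$ in place of $r$, whereas you write out the underlying bijection $\bigl(g,(x_1,\dots,x_r)\bigr)\mapsto(x_1,\dots,x_r,g)$ directly.
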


\begin{proof}
Let $X:=\Hom(\mathbb Z^r,G)=\Comm_r(G)$ and let $G$ act on $X$ by diagonal conjugation:
$h\cdot(x_1,\dots,x_r)=(hx_1h^{-1},\dots,hx_rh^{-1})$.
By Burnside's lemma,
\[
|X/G|=\frac{1}{|G|}\sum_{h\in G} |X^h|,
\]
where $X^h$ is the fixed-point set of $h$. A tuple $(x_1,\dots,x_r)$ is fixed by $h$
iff each $x_i\in \C_G(h)$, and it still must commute pairwise; thus $X^h=\Comm_r(\C_G(h))$
and $|X^h|=|\C_G(h)|^r P_r(\C_G(h))$.
Therefore
\[
\kappa_r(G)=\frac{1}{|G|}\sum_{h\in G}|\C_G(h)|^r P_r(\C_G(h)).
\]
Comparing with Lemma~\ref{lem:recursion} (applied with $r+1$ in place of $r$) gives
$\kappa_r(G)=|G|^r P_{r+1}(G)$.
\end{proof}

\begin{proposition}[A congruence for $p$-groups]\label{prop:pgroup-orbit-congruence}
Let $G$ be a finite $p$-group and let $r\ge 0$.
Then
\[
\kappa_r(G)=\big|\Hom(\mathbb Z^r,G)/G\big|\equiv |\Z(G)|^r \pmod p.
\]
Equivalently,
\[
|G|^r P_{r+1}(G)\equiv |\Z(G)|^r \pmod p.
\]
\end{proposition}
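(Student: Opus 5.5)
We prove the congruence by counting orbits of the $p$-group $G$ acting on $X:=\Hom(\mathbb Z^r,G)=\Comm_r(G)$ by diagonal conjugation, using the fact that every non-singleton orbit has size divisible by $p$. Write $X^G$ for the set of fixed points. By the orbit decomposition,
\[
\kappa_r(G)=|X^G|+\#\{\text{orbits of size}>1\}.
\]
Since this is not a weighted count, divisibility of orbit sizes does not by itself control the number of large orbits. So instead we work with Burnside's formula, which Theorem~\ref{thm:burnside} packages as $\kappa_r(G)=|G|^rP_{r+1}(G)=|\Comm_{r+1}(G)|/|G|$.

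The right move is to count $\Comm_{r+1}(G)$ with an auxiliary action that makes the orbit-count appear directly. Consider the set $Y:=\Comm_{r+1}(G)$ and the subgroup $Z:=\Z(G)$ acting on $Y$ by translating the last coordinate: $z\cdot(x_1,\dots,x_r,h)=(x_1,\dots,x_r,zh)$. This action is free, so $|Y|=|Z|\cdot|Y/Z|$. That alone does not help modulo $p$ either.

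The cleaner route is to use the identity $\kappa_r(G)=\sum_{\mathbf x}1/|\text{orbit}(\mathbf x)|$. Equivalently, sort the orbits of $G$ on $X$ by size $p^{k}$. The number of orbits of size $p^k$ is $n_k$, and $\kappa_r(G)=\sum_k n_k$ while $|X|=\sum_k n_kp^k$. What we actually need is $\sum_k n_k\equiv n_0 \pmod p$, with $n_0=|X^G|$. This requires a second action: the group $\Z(G)^r$ acts on $X$ by coordinatewise multiplication, and this action commutes with conjugation. Hence $\Z(G)^r$ permutes the $G$-orbits on $X$, and the orbit set $X/G$ is acted on by the $p$-group $\Z(G)^r$.

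Then $\kappa_r(G)\equiv |(X/G)^{\Z(G)^r}| \pmod p$. The remaining task is to show that the fixed $G$-orbits under this action number exactly $|\Z(G)|^r$ modulo $p$, iterating the idea if needed. The tuples in $\Z(G)^r$ themselves give $|\Z(G)|^r$ singleton $G$-orbits, and each of these is fixed. Indeed, translating a central tuple by $\Z(G)^r$ gives another central tuple, so the set of such singleton orbits forms one free $\Z(G)^r$-orbit, not a set of fixed points. So the counting must be organized differently: the correct statement is that singleton $G$-orbits are exactly the central tuples, of which there are $|\Z(G)|^r$. It then remains to show that the number of non-singleton $G$-orbits is $\equiv 0 \pmod p$.

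For that step we induct on $|G|$ using the class-by-class decomposition. A non-singleton orbit has a representative with some $x_i\notin\Z(G)$. Group the orbits by the conjugacy class of the first noncentral coordinate, and reduce to counting orbits of $\C_G(x)$ on commuting tuples in $\C_G(x)$, where $\C_G(x)$ is a proper $p$-subgroup. This is analogous to Lemma~\ref{lem:recursion}. The inductive hypothesis gives those counts modulo $p$ as $|\Z(\C_G(x))|^{s}$, which is divisible by $p$ when the tuple length $s\ge1$. The terms with tuple length $0$ must be matched against the $\Z(G)$ factors.

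The main obstacle is this bookkeeping: ensuring the recursion by first noncentral coordinate produces only terms that are powers of nontrivial $p$-group centers with positive exponent. I expect it to work because $|\Z(\C_G(x))|\ge p$ always holds.
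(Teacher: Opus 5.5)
Your opening diagnosis is correct, and it sinks the statement itself, not just the naive argument: the congruence is false for $r=1$. For $G=D_8$ one has $\kappa_1(D_8)=k(D_8)=5$ while $|\Z(D_8)|=2$, so $5\not\equiv 2\pmod 2$. More generally, an extraspecial group of order $p^3$ has $k(G)=p^2+p-1\equiv -1\pmod p$ but $|\Z(G)|=p$. The paper's own dihedral formula gives the same failure: $\kappa_1(D_{2n})=n/2+3$, which is odd whenever $4\mid n$. The paper's proof is the one-line argument you rejected: it claims that the number of orbits of a $p$-group action is congruent mod $p$ to the number of fixed points. Orbit sizes being powers of $p$ only gives $|X|\equiv|X^G|\pmod p$, i.e.\ $|\Hom(\mathbb Z^r,G)|\equiv|\Z(G)|^r\pmod p$. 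That says nothing about $|X/G|=\kappa_r(G)$.

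So the step you leave open, that the number of non-singleton $G$-orbits is $\equiv 0\pmod p$, cannot be proved. For $r=1$ these orbits are the noncentral classes, and $D_8$ has three of them. Your bookkeeping breaks at exactly this point. Each noncentral class contributes a length-$0$ term $\kappa_0(\C_G(x))=1$, and there is no $\Z(G)$ factor to absorb it. For $r=2$, the induction would invoke the $s=1$ case $k(\C_G(x))\equiv|\Z(\C_G(x))|\pmod p$, which is the same false statement applied to centralizers. For instance, in $D_8\wr C_2$ the noncentral element $(r^2,1)$ has centralizer $D_8\times D_8$, with $25$ classes and center of order $4$.

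What your argument does prove correctly is the reduction $\kappa_r(G)\equiv\#\{O\in X/G:\ \Z(G)^r\cdot O=O\}\pmod p$. For $r=1$ this counts the classes $x^G$ with $x\Z(G)\subseteq x^G$, which is $3$ for $D_8$.

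A salvageable statement has to exclude $r=1$. By the centralizer recursion $\kappa_r(G)=\sum_{[g]}\kappa_{r-1}(\C_G(g))$, with every $\C_G(g)\ge\Z(G)\neq 1$, the range $r\ge 2$ reduces to showing $\kappa_2(H)\equiv 0\pmod p$ for all nontrivial $p$-groups $H$. This is consistent with the examples: $\kappa_r(D_{2^{k+1}})=2^{kr-1}+2^{r-1}(2^{r+1}-1)$ is even for $r\ge2$, $k\ge2$. Proving it needs an argument that controls the sum $\sum_{[x]}k(\C_H(x))$ as a whole, since individual terms need not vanish mod $p$. Neither your proposal nor the paper supplies such an argument.
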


\begin{proof}
A finite $p$-group acting on a finite set has all orbit sizes powers of $p$.
Hence the number of orbits is congruent modulo $p$ to the number of fixed points.
Apply this to the diagonal conjugation action of $G$ on $\Hom(\mathbb Z^r,G)=\Comm_r(G)$:
a commuting $r$-tuple is fixed by all of $G$ if and only if each coordinate lies in $\Z(G)$.
Thus the fixed-point set has size $|\Z(G)|^r$, proving the congruence.
\end{proof}

\begin{proposition}[$\kappa$-recursion by centralizers]\label{prop:kappa-recursion}
For every finite group $G$ and every $r\ge 1$,
\[
\kappa_r(G)=\sum_{[g]\subseteq G}\kappa_{r-1}(\C_G(g)),
\]
where the sum runs over conjugacy classes in $G$.
In particular, for $r=1$ this says $k(G)=\sum_{[g]}1$, and for $r=2$ it becomes
\[
\kappa_2(G)=\sum_{[g]\subseteq G} k(\C_G(g)).
\]
\end{proposition}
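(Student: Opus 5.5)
The plan is to decompose the $G$-set $X_r:=\Hom(\mathbb Z^r,G)=\Comm_r(G)$ according to its first coordinate. The projection
\[
\pi: X_r\to G,\qquad (x_1,\dots,x_r)\mapsto x_1,
\]
is $G$-equivariant for diagonal conjugation on $X_r$ and ordinary conjugation on $G$. So the orbits of $X_r$ are partitioned according to the conjugacy class of $x_1$, and $\kappa_r(G)$ is the sum, over conjugacy classes $[g]$, of the number of $G$-orbits in $\pi^{-1}([g])$.

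Fix a representative $g$ of its class. The key step is the standard transfer principle: for a transitive $G$-set $G/H$ with $H=\C_G(g)$ and an equivariant map $Y\to G/H$, the $G$-orbits on $Y$ correspond bijectively to the $H$-orbits on the fibre $Y_g$ over the base point. I would verify this directly.

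\emph{Surjectivity.} Every orbit meets $Y_g$, since any tuple with first coordinate $hgh^{-1}$ can be conjugated by $h^{-1}$ to one with first coordinate $g$.

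\emph{Injectivity.} If $\mathbf x,\mathbf y\in Y_g$ and $k\mathbf x k^{-1}=\mathbf y$, then $kgk^{-1}=g$, so $k\in \C_G(g)$.

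Next I identify the fibre. It is $\{(g,x_2,\dots,x_r)\}$ with all $x_i$ pairwise commuting and commuting with $g$. This is exactly $\{g\}\times \Comm_{r-1}(\C_G(g))$, and the action of $\C_G(g)$ fixes $g$ while acting diagonally on the remaining coordinates. Hence the number of $\C_G(g)$-orbits on the fibre is $\kappa_{r-1}(\C_G(g))$. This is independent of the choice of representative, because conjugate elements have conjugate, hence isomorphic, centralizers.

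Summing over classes gives the formula. For $r=1$ the fibre is a point, and the convention $\kappa_0=1$ gives $k(G)=\sum_{[g]}1$. For $r=2$ we use $\kappa_1(\C_G(g))=k(\C_G(g))$.

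As a consistency check, one can also derive the result numerically from Theorem~\ref{thm:burnside} and Lemma~\ref{lem:recursion}. We have
\[
\kappa_r(G)=|G|^{r}P_{r+1}(G)=|G|^{-1}\sum_{x}|\C_G(x)|^{r}P_r(\C_G(x)).
\]
Grouping by classes, each class has size $|G|/|\C_G(g)|$. Then $|\C_G(g)|^{r-1}P_r(\C_G(g))=\kappa_{r-1}(\C_G(g))$ by Theorem~\ref{thm:burnside} applied to $\C_G(g)$, which is valid for $r\ge2$.

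The only mild obstacle is the orbit-transfer bijection, specifically checking injectivity through the stabilizer $\C_G(g)$. The rest is bookkeeping.
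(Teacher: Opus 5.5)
Your proof is correct, but your main argument takes a different route from the paper. The paper's proof is purely numerical: it writes $\kappa_r(G)=|G|^rP_{r+1}(G)$ by Theorem~\ref{thm:burnside}, expands $P_{r+1}(G)$ with Lemma~\ref{lem:recursion}, and groups the sum over $x\in G$ into conjugacy classes of size $|G|/|\C_G(g)|$. It then applies Theorem~\ref{thm:burnside} again to each centralizer. That is exactly what you relegate to a ``consistency check.''

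Your primary argument instead fibres $\Comm_r(G)$ over the first coordinate and uses orbit transfer through the stabilizer $\C_G(g)$. This gives an explicit bijection
\[
\Hom(\mathbb Z^r,G)/G\;\cong\;\bigsqcup_{[g]}\Hom(\mathbb Z^{r-1},\C_G(g))/\C_G(g),
\]
so you prove a set-level statement rather than an equality of numbers. You use neither Burnside's lemma nor the probabilistic normalization, and the case $r=1$ is just the observation that the fibre is a single point.

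Your route also reverses the paper's logical order. Combine your bijection with Lemma~\ref{lem:recursion}, take as inductive hypothesis $\kappa_{r-1}(H)=|H|^{r-1}P_r(H)$ for all finite $H$, and use the base case $\kappa_0=1=|G|^0P_1(G)$. Induction on $r$ then yields Theorem~\ref{thm:burnside} itself. The paper's version, in exchange, is a two-line corollary of identities it has already established.
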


\begin{proof}
Combine Theorem~\ref{thm:burnside} with Lemma~\ref{lem:recursion}, grouping the sum by conjugacy classes.
\end{proof}

\begin{proposition}[Direct products for $\kappa_r$]\label{prop:kappa-direct}
If $G,H$ are finite groups, then for every $r\ge 0$,
\[
\kappa_r(G\times H)=\kappa_r(G)\,\kappa_r(H).
\]
In particular, if $A$ is abelian then
\[
\kappa_r(G\times A)=|A|^r\,\kappa_r(G),
\qquad\text{while}\qquad
P_r(G\times A)=P_r(G).
\]
\end{proposition}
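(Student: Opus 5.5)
The plan is to obtain the first identity directly from the orbit-set description of $\kappa_r$, in the same way as Proposition~\ref{prop:direct}. Then I will specialize to an abelian second factor. For $r=0$ both sides equal $1$ by the convention $\kappa_0=1$, so assume $r\ge 1$.

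Pairwise commutation in $G\times H$ holds coordinatewise, exactly as in the proof of Proposition~\ref{prop:direct}. Hence there is a bijection
\[
\Hom(\mathbb Z^r,G\times H)\cong \Hom(\mathbb Z^r,G)\times \Hom(\mathbb Z^r,H),
\]
sending $((g_1,h_1),\dots,(g_r,h_r))$ to $((g_1,\dots,g_r),(h_1,\dots,h_r))$. Diagonal conjugation by $(a,b)\in G\times H$ corresponds to the product action of $G\times H$ on the right-hand side: $a$ acts on the first factor and $b$ on the second. For a product of actions $G\curvearrowright X$ and $H\curvearrowright Y$, the orbits of $G\times H$ on $X\times Y$ are exactly the products $O\times O'$ of orbits. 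So $|(X\times Y)/(G\times H)|=|X/G|\,|Y/H|$, which gives $\kappa_r(G\times H)=\kappa_r(G)\kappa_r(H)$.

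An alternative route is via Theorem~\ref{thm:burnside}. Since $\kappa_r=|{\cdot}|^rP_{r+1}$, Proposition~\ref{prop:direct} applied with $r+1$ gives
\[
\kappa_r(G\times H)=|G|^r|H|^rP_{r+1}(G)P_{r+1}(H)=\kappa_r(G)\kappa_r(H).
\]
Either argument suffices. I would present the Burnside one, since it reuses results already proved, and mention the orbit-product argument for the case $r=0$ as well.

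For the special case, let $A$ be abelian. Every $r$-tuple in $A$ commutes and conjugation in $A$ is trivial, so every orbit is a singleton and $\kappa_r(A)=|A|^r$. Likewise $P_r(A)=1$, so Proposition~\ref{prop:direct} gives $P_r(G\times A)=P_r(G)$.

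There is no real obstacle. The only point requiring care is checking that the diagonal conjugation action really splits as a product action, which is immediate from the componentwise group law.
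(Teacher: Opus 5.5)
Your proposal is correct, and the orbit-product argument is essentially the paper's proof: the bijection $\Hom(\mathbb Z^r,G\times H)\cong\Hom(\mathbb Z^r,G)\times\Hom(\mathbb Z^r,H)$ with diagonal conjugation acting factorwise, and the abelian special case from Proposition~\ref{prop:direct} together with $P_r(A)=1$. Your alternative via Theorem~\ref{thm:burnside} and Proposition~\ref{prop:direct} at rank $r+1$ is also valid; it only covers $r\ge 1$, so $r=0$ needs the convention $\kappa_0=1$, which you supply.
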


\begin{proof}
Use $\Hom(\mathbb Z^r,G\times H)\cong \Hom(\mathbb Z^r,G)\times\Hom(\mathbb Z^r,H)$ and the fact that
diagonal conjugation by $G\times H$ acts separately on the two factors. The final claim follows from
Proposition~\ref{prop:direct} and the fact that $P_r(A)=1$ for abelian $A$.
\end{proof}

\section{Asymptotics and finite spectrum}

\subsection{Asymptotics, entropy, and generating series}\label{subsec:dominant-abelian}

The root-limit statement from Remark~\ref{rem:decay-rate} can in fact be sharpened to an exact leading term controlled by the maximum-order abelian subgroups of $G$.

\begin{theorem}[Dominant abelian asymptotic]\label{thm:dominant-abelian}
Let $G$ be a finite group. With the notation $m(G)$, $N_{\max}(G)$, and $b(G)$ fixed in the introduction, one has, as $r\to\infty$,
\begin{equation}\label{eq:dominant-Comm}
|\Hom(\mathbb Z^r,G)|
=
N_{\max}(G)\,m(G)^r+O\!\bigl(b(G)^r\bigr).
\end{equation}
Equivalently,
\begin{equation}\label{eq:dominant-Pr}
P_r(G)
=
N_{\max}(G)\Bigl(\frac{m(G)}{|G|}\Bigr)^r
+
O\!\Bigl(\Bigl(\frac{b(G)}{|G|}\Bigr)^r\Bigr).
\end{equation}
Moreover,
\begin{equation}\label{eq:dominant-kappa}
\kappa_r(G)
=
\frac{N_{\max}(G)\,m(G)}{|G|}\,m(G)^r+O\!\bigl(b(G)^r\bigr).
\end{equation}
In particular,
\[
\lim_{r\to\infty}P_r(G)^{1/r}=\frac{m(G)}{|G|},
\qquad
\lim_{r\to\infty}\kappa_r(G)^{1/r}=m(G).
\]
\end{theorem}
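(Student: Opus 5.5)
The plan is to write $|\Hom(\mathbb Z^r,G)|$ exactly as a finite sum over abelian subgroups, via Möbius inversion on the poset $\mathcal A(G)$ of abelian subgroups of $G$. Every commuting $r$-tuple $(x_1,\dots,x_r)$ generates a unique abelian subgroup $\langle x_1,\dots,x_r\rangle$. For $A\in\mathcal A(G)$, let $\varphi_r(A)$ be the number of $r$-tuples in $A^r$ that generate $A$. Then
\[
|\Hom(\mathbb Z^r,G)|=\sum_{A\in\mathcal A(G)}\varphi_r(A).
\]
Möbius inversion on the subgroup lattice of $A$ (Hall's formula) gives
\[
\varphi_r(A)=\sum_{D\le A}\mu_A(D)\,|D|^r.
\]
Here $\mu_A(D)$ is the Möbius function of the interval $[D,A]$, and $\mu_A(A)=1$.

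Substituting and collecting terms by the order of $D$, I get
\[
|\Hom(\mathbb Z^r,G)|=\sum_{n}e_n\, n^r,\qquad e_n:=\sum_{\substack{D\in\mathcal A(G)\\|D|=n}}\ \sum_{A\ge D,\ A\in\mathcal A(G)}\mu_A(D)=\sum_{|D|=n}\ \sum_{A\ge D\ \text{abelian}}\mu(D,A),
\]
where $n$ runs over orders of abelian subgroups. This is a finite sum with integer coefficients. The exponents $n$ lie in $\{\,|D|: D\le G \text{ abelian}\}$, so the largest possible base is $m(G)$ and the next largest is $b(G)$.

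Next I identify the top coefficient. If $|D|=m(G)$, the only abelian $A\ge D$ is $A=D$, by maximality of order. So each such $D$ contributes $\mu(D,D)=1$, and $e_{m(G)}=N_{\max}(G)$. Every other term has base at most $b(G)$, so it is $O(b(G)^r)$, and the implied constant is $\sum_n|e_n|$, which does not depend on $r$. This proves \eqref{eq:dominant-Comm}, and dividing by $|G|^r$ gives \eqref{eq:dominant-Pr}. When $G$ is abelian, $b(G)=0$ and the formula is exact. Indeed $m(G)=|G|$ and $N_{\max}=1$, and the remainder, a combination of $n^r$ with $n<|G|$, must vanish identically. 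This follows because $|\Hom(\mathbb Z^r,G)|=|G|^r$ and distinct exponentials are linearly independent. It can also be handled by simply reading $O(0^r)$ as eventually zero.

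For $\kappa_r$ I use Theorem~\ref{thm:burnside}: $\kappa_r(G)=|G|^{r}P_{r+1}(G)=|G|^{-1}|\Hom(\mathbb Z^{r+1},G)|$. Inserting \eqref{eq:dominant-Comm} with $r+1$ gives
\[
\kappa_r(G)=\frac{N_{\max}(G)m(G)}{|G|}m(G)^r+O(b(G)^{r+1}/|G|),
\]
which is \eqref{eq:dominant-kappa}. The root limits follow because $N_{\max}(G)\ge1$ and $b(G)<m(G)$: the main term dominates, so $P_r(G)^{1/r}\to m(G)/|G|$ and $\kappa_r(G)^{1/r}\to m(G)$. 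This is consistent with Remark~\ref{rem:decay-rate}.

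The main obstacle is making the inversion step rigorous. I will state the generation count $\varphi_r(A)=\sum_{D\le A}\mu(D,A)|D|^r$ and justify it by inverting $|A|^r=\sum_{D\le A}\varphi_r(D)$. I will also check that the top-order coefficient really does not cancel. Both follow from the observation that abelian subgroups of order $m(G)$ are maximal in $\mathcal A(G)$, so no other terms contribute at base $m(G)$. The rest is bookkeeping.
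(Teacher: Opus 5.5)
Your proof is correct, but it takes a different route from the paper's.

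\textbf{The paper's route.} It covers $\Comm_r(G)$ by the sets $M^r$ with $M$ maximal abelian, and separates the maximum-order members ($U_r$) from the rest ($V_r$). It runs inclusion--exclusion only on $U_r$, using that any intersection of two or more distinct maximum-order abelian subgroups has order at most $b(G)$. The rest is handled by the crude bound $|V_r|\le|\mathcal M(G)|\,b(G)^r$ together with $|U_r|\le|\Comm_r(G)|\le|U_r|+|V_r|$.

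\textbf{Your route.} You partition commuting tuples by the abelian subgroup they generate and M\"obius-invert Hall's generation count. This gives an exact identity $|\Hom(\mathbb Z^r,G)|=\sum_n e_n n^r$ for every $r$. You then read off $e_{m(G)}=N_{\max}(G)$ from the fact that maximum-order abelian subgroups are maximal in the poset of abelian subgroups.

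\textbf{What each buys.} The paper's argument is more elementary, needing no M\"obius functions, and has a transparent error constant, but it yields only an asymptotic statement. Yours yields more. For non-abelian $G$, in $N_2(G)\cup\{G\}$ one has $-\mu_2(D,G)=\sum_{A\ge D,\ A\ \mathrm{abelian}}\mu(D,A)$, so your $e_n$ is exactly the coefficient $c_{|G|/n}(G)$. Your argument therefore reproves from scratch the finite Dirichlet spectrum that the paper imports from Torres-Giese in Theorem~\ref{thm:dirichlet-spectrum}. It also gives $c_{|G|/m(G)}(G)=N_{\max}(G)$ directly, whereas the paper obtains this only by comparison with the present theorem. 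Your device is the same partition-by-generated-subgroup plus M\"obius inversion that the paper applies in the quotient $K$ in Theorem~\ref{thm:abelian-exact-count}, here applied to $G$ itself.

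Your deductions for $\kappa_r$ and the root limits match the paper's.
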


\begin{proof}
If $G$ is abelian, then $m(G)=|G|$, $N_{\max}(G)=1$, $b(G)=0$, and the formulas are immediate. Assume henceforth that $G$ is non-abelian.

Let $\mathcal M(G)$ be the set of maximal abelian subgroups of $G$, and let
\[
\mathcal A_{\max}(G):=\{A\in \mathcal M(G): |A|=m(G)\}
=\{A\le G:\ A\text{ abelian},\ |A|=m(G)\}.
\]
Every commuting $r$-tuple lies in some maximal abelian subgroup, hence
\[
\Hom(\mathbb Z^r,G)=\Comm_r(G)=\bigcup_{M\in\mathcal M(G)} M^r.
\]
Write
\[
U_r:=\bigcup_{A\in \mathcal A_{\max}(G)} A^r,
\qquad
V_r:=\bigcup_{M\in \mathcal M(G)\setminus \mathcal A_{\max}(G)} M^r.
\]
Then $\Comm_r(G)=U_r\cup V_r$.

If $A_1,\dots,A_t\in \mathcal A_{\max}(G)$ are distinct, then $A_1\cap \cdots \cap A_t$ is abelian and has order $<m(G)$, hence at most $b(G)$.
By inclusion--exclusion,
\[
|U_r|
=
N_{\max}(G)\,m(G)^r
+
\sum_{t=2}^{N_{\max}(G)} (-1)^{t+1}
\sum_{1\le i_1<\cdots < i_t\le N_{\max}(G)}
|A_{i_1}\cap\cdots\cap A_{i_t}|^r,
\]
so
\[
|U_r|=N_{\max}(G)\,m(G)^r+O\!\bigl(b(G)^r\bigr).
\]
On the other hand, every subgroup in $\mathcal M(G)\setminus \mathcal A_{\max}(G)$ has order at most $b(G)$, so
\[
|V_r|\le |\mathcal M(G)|\,b(G)^r = O\!\bigl(b(G)^r\bigr).
\]
Since $\Comm_r(G)=U_r\cup V_r$, formula \eqref{eq:dominant-Comm} follows.
Dividing by $|G|^r$ gives \eqref{eq:dominant-Pr}.
Finally, using
\[
\kappa_r(G)=|G|^r P_{r+1}(G)
\]
yields
\[
\kappa_r(G)
=
|G|^r\left(
N_{\max}(G)\Bigl(\frac{m(G)}{|G|}\Bigr)^{r+1}
+
O\!\Bigl(\Bigl(\frac{b(G)}{|G|}\Bigr)^{r+1}\Bigr)\right)
=
\frac{N_{\max}(G)m(G)}{|G|}\,m(G)^r+O\!\bigl(b(G)^r\bigr),
\]
which is \eqref{eq:dominant-kappa}. The two root limits follow immediately.
\end{proof}

\begin{corollary}[Two complementary entropy normalizations]\label{cor:entropy-bridge}
Define
\[
\lambda_{\mathrm{prob}}(G):=\lim_{r\to\infty}P_r(G)^{1/r},
\qquad
\lambda_{\mathrm{orb}}(G):=\lim_{r\to\infty}\kappa_r(G)^{1/r}.
\]
Then
\[
\lambda_{\mathrm{prob}}(G)=\frac{m(G)}{|G|},
\qquad
\lambda_{\mathrm{orb}}(G)=m(G).
\]
Equivalently, if
\[
h_{\mathrm{prob}}(G):=-\log \lambda_{\mathrm{prob}}(G),
\qquad
h_{\mathrm{orb}}(G):=\log \lambda_{\mathrm{orb}}(G),
\]
then
\[
h_{\mathrm{prob}}(G)=\log\frac{|G|}{m(G)},
\qquad
h_{\mathrm{orb}}(G)=\log m(G),
\qquad
h_{\mathrm{prob}}(G)+h_{\mathrm{orb}}(G)=\log |G|.
\]
\end{corollary}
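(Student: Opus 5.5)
This corollary follows directly from Theorem~\ref{thm:dominant-abelian}, so the plan is to read off the two root limits and then take logarithms.

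First I note that both limits exist. For $\lambda_{\mathrm{prob}}$ this is already guaranteed by Remark~\ref{rem:decay-rate} via Fekete's lemma. In any case, both limits are asserted in the last display of Theorem~\ref{thm:dominant-abelian}, so existence needs no separate argument.

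To make the root-limit step explicit, I argue from \eqref{eq:dominant-Comm}. Since $N_{\max}(G)\ge 1$ and $b(G)<m(G)$, the leading term dominates. Hence
\[
|\Hom(\mathbb Z^r,G)|=N_{\max}(G)\,m(G)^r\bigl(1+o(1)\bigr).
\]
Taking $r$th roots, and using $N_{\max}(G)^{1/r}\to 1$ and $(1+o(1))^{1/r}\to1$, gives $|\Hom(\mathbb Z^r,G)|^{1/r}\to m(G)$.

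\emph{Probability normalization.} Dividing by $|G|$ gives $\lambda_{\mathrm{prob}}(G)=m(G)/|G|$.

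\emph{Orbit normalization.} By \eqref{eq:dominant-kappa}, the leading coefficient of $\kappa_r(G)$ is $N_{\max}(G)m(G)/|G|$. This is a fixed positive constant independent of $r$, so the same argument gives $\lambda_{\mathrm{orb}}(G)=m(G)$. Alternatively, Theorem~\ref{thm:burnside} gives $\kappa_r(G)=|G|^rP_{r+1}(G)$. Then
\[
\kappa_r(G)^{1/r}=|G|\,\bigl(P_{r+1}(G)^{1/(r+1)}\bigr)^{(r+1)/r}\to |G|\cdot\frac{m(G)}{|G|}=m(G).
\]

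\emph{Entropies.} Both limits are positive, since $m(G)\ge1$. Hence the logarithms are well defined and
\[
h_{\mathrm{prob}}(G)=-\log\frac{m(G)}{|G|}=\log\frac{|G|}{m(G)},\qquad h_{\mathrm{orb}}(G)=\log m(G).
\]
Adding these gives $h_{\mathrm{prob}}(G)+h_{\mathrm{orb}}(G)=\log|G|$.

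There is no real obstacle here. The only points needing care are the abelian case and the positivity of the leading coefficient. When $G$ is abelian, $b(G)=0$ and the formulas are exact: $P_r(G)=1$, $m(G)=|G|$, and $\kappa_r(G)=|G|^r$. The leading coefficient is positive in all cases, which ensures the limit is $m(G)$ and not something smaller.
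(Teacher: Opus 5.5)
Your proposal is correct and follows the paper's route. The paper gives no separate argument for this corollary: it is read off from the two root limits at the end of Theorem~\ref{thm:dominant-abelian}, and the logarithms are then taken, exactly as you do. Your explicit check that the leading coefficients $N_{\max}(G)$ and $N_{\max}(G)m(G)/|G|$ are positive, which forces the $r$th roots to converge to $m(G)$ and not something smaller, is the detail the paper leaves implicit.
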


\begin{proposition}[Split abelian extensions and fixed-subgroup geometry]\label{prop:semidirect-largest}
Let $G=A\rtimes K$ be a finite semidirect product with $A$ abelian. Then
\[
m(G)=\max_{B\le K\ \text{abelian}} |\C_A(B)|\,|B|.
\]
Consequently,
\[
\lambda_{\mathrm{prob}}(G)=\frac{1}{|A||K|}\max_{B\le K\ \text{abelian}} |\C_A(B)|\,|B|,
\qquad
\lambda_{\mathrm{orb}}(G)=\max_{B\le K\ \text{abelian}} |\C_A(B)|\,|B|,
\]
and equivalently
\[
h_{\mathrm{prob}}(G)=\log\frac{|A||K|}{\max_{B\le K\ \text{abelian}} |\C_A(B)|\,|B|},
\qquad
h_{\mathrm{orb}}(G)=\log\Bigl(\max_{B\le K\ \text{abelian}} |\C_A(B)|\,|B|\Bigr).
\]
\end{proposition}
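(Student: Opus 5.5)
The plan is to prove the two inequalities for $m(G)$ separately, and then read off the entropy consequences from Corollary~\ref{cor:entropy-bridge} with $|G|=|A||K|$. Throughout I identify $A$ and $K$ with their images in $G=A\rtimes K$, write every element uniquely as $ak$ with $a\in A$, $k\in K$, and let $\pi:G\to K$, $ak\mapsto k$, be the projection homomorphism with kernel $A$.

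\emph{Lower bound.} Fix an abelian subgroup $B\le K$ and set $H_B:=\C_A(B)\,B$. Since $\C_A(B)\le A$ is normalized by $B$, the set $H_B$ is a subgroup. It is abelian because $\C_A(B)$ is abelian (as $A$ is), $B$ is abelian, and each element of $\C_A(B)$ commutes with each element of $B$ by definition. Moreover $\C_A(B)\cap B\le A\cap K=1$, so $|H_B|=|\C_A(B)|\,|B|$. Taking the maximum over $B$ gives $m(G)\ge\max_B|\C_A(B)|\,|B|$.

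\emph{Upper bound.} Let $H\le G$ be abelian and put $B:=\pi(H)\le K$, which is abelian as a homomorphic image of $H$. Then $|H|=|H\cap A|\,|B|$, so it suffices to show $H\cap A\le\C_A(B)$. The key observation is that, because $A$ is abelian, conjugation by $ak$ acts on $A$ exactly as conjugation by $k$:
\[
(ak)\,x\,(ak)^{-1}=a\,(kxk^{-1})\,a^{-1}=kxk^{-1}\qquad (x\in A).
\]
Let $x\in H\cap A$ and $k\in B$, and choose $h=ak\in H$ with $\pi(h)=k$. Since $H$ is abelian, $x=hxh^{-1}=kxk^{-1}$. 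Hence $x$ is centralized by all of $B$, i.e.\ $x\in\C_A(B)$. Therefore $|H|\le|\C_A(B)|\,|B|$, and taking $H$ of order $m(G)$ gives the reverse inequality.

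The only step needing care is this upper bound, specifically the reduction of the action of a lift $ak$ to that of $k$, which uses that $A$ is abelian; with that in hand the argument is elementary. Finally, substituting $m(G)$ into $\lambda_{\mathrm{prob}}(G)=m(G)/|G|$ and $\lambda_{\mathrm{orb}}(G)=m(G)$ from Corollary~\ref{cor:entropy-bridge} (which rests on Theorem~\ref{thm:dominant-abelian}), and taking logarithms, yields the stated formulas for $\lambda_{\mathrm{prob}},\lambda_{\mathrm{orb}},h_{\mathrm{prob}},h_{\mathrm{orb}}$.
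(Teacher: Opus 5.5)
Your proof is correct and follows the paper's argument essentially verbatim. It obtains the lower bound from the abelian subgroup $\C_A(B)B$ (the paper writes $\C_A(B)\rtimes B$), the upper bound from $|H|=|H\cap A|\,|\pi(H)|$ together with $H\cap A\le \C_A(\pi(H))$, and the entropy formulas from Corollary~\ref{cor:entropy-bridge}; your explicit checks that a lift $ak$ acts on the abelian normal subgroup $A$ exactly as $k$ does, and that $\C_A(B)\cap B=1$, simply spell out steps the paper states more briefly.
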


\begin{proof}
Let $M\le G$ be abelian, and let $\pi:G\to K$ be the natural projection. Then $B:=\pi(M)$ is abelian.
The kernel of $\pi|_M$ is $M\cap A$, so
\[
|M|=|M\cap A|\,|B|.
\]
If $a\in M\cap A$ and $m\in M$ projects to $b\in B$, then $ma=am$ because $M$ is abelian, hence conjugation by $m$ fixes $a$.
Since conjugation by $m$ on $A$ agrees with the action of $b$, it follows that $a\in \C_A(b)$. Therefore
\[
M\cap A\le \C_A(B):=\bigcap_{b\in B}\C_A(b),
\]
so
\[
|M|\le |\C_A(B)|\,|B|.
\]
Taking the maximum over all abelian $M\le G$ gives
\[
m(G)\le \max_{B\le K\ \text{abelian}} |\C_A(B)|\,|B|.
\]
Conversely, if $B\le K$ is abelian, then $\C_A(B)\rtimes B$ is an abelian subgroup of $G$: the subgroup $B$ is abelian, the subgroup $\C_A(B)$ is abelian, and $B$ acts trivially on $\C_A(B)$. Thus
\[
m(G)\ge |\C_A(B)|\,|B|
\]
for every abelian $B\le K$, proving the formula for $m(G)$. The displayed formulas for $\lambda_{\mathrm{prob}}(G)$, $\lambda_{\mathrm{orb}}(G)$, $h_{\mathrm{prob}}(G)$, and $h_{\mathrm{orb}}(G)$ now follow from Corollary~\ref{cor:entropy-bridge}.
\end{proof}

\begin{theorem}[Abelian split quotients: exact tuple count]\label{thm:abelian-exact-count}
Let $G=A\rtimes K$ with $A$ and $K$ finite abelian, and let $\pi:G\to K$ be the natural projection.
For each subgroup $B\le K$ define
\[
C_B:=\C_A(B),
\qquad
\Lambda(B):=\{H\le G: H\text{ abelian},\ \pi(H)=B,\ H\cap A=C_B\},
\qquad
\lambda(B):=|\Lambda(B)|.
\]
Thus $\Lambda(B)$ is the family of abelian lifts of $B$ whose kernel in $A$ is exactly $C_B$.
Also set
\[
\varphi_r(B):=\#\{(b_1,\dots,b_r)\in B^r:\ \langle b_1,\dots,b_r\rangle=B\}.
\]
Then for every $r\ge 1$,
\[
|\Hom(\mathbb Z^r,G)|=\sum_{B\le K}\lambda(B)\,|C_B|^r\,\varphi_r(B).
\]
Equivalently, if $\mu_B(D):=\mu(D,B)$ denotes the M\"obius function of the subgroup lattice interval $[D,B]$, then
\[
\varphi_r(B)=\sum_{D\le B}\mu_B(D)\,|D|^r
\]
and therefore
\[
|\Hom(\mathbb Z^r,G)|=\sum_{B\le K}\sum_{D\le B}\lambda(B)\mu_B(D)\,(|C_B|\,|D|)^r.
\]
\end{theorem}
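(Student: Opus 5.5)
The plan is to partition $\Hom(\mathbb Z^r,G)=\Comm_r(G)$ according to two pieces of data attached to a commuting tuple: the subgroup $B:=\langle \pi(g_1),\dots,\pi(g_r)\rangle\le K$, and a canonical abelian lift $H\in\Lambda(B)$ that contains the tuple. The exact formula then follows by counting, for each fixed pair $(B,H)$, the tuples in $H^r$ whose projections generate $B$.

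\emph{Step 1 (canonical saturation).} Let $(g_1,\dots,g_r)$ commute pairwise, put $H_0:=\langle g_1,\dots,g_r\rangle$, which is abelian, and let $B:=\pi(H_0)$.
\begin{itemize}
\item Write $g_i=a_ik_i$ with $a_i\in A$ and $k_i\in B$. For $c\in C_B$, the factor $a_i$ commutes with $c$ because $A$ is abelian, and $k_i$ fixes $c$ by the definition of $C_B$. Hence $C_B$ commutes with every $g_i$.
\item It follows that $H:=H_0C_B$ is an abelian subgroup, and $\pi(H)=B$.
\item As in the proof of Proposition~\ref{prop:semidirect-largest}, $H_0\cap A\le C_B$. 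The modular law gives $H\cap A=(H_0\cap A)C_B=C_B$, so $H\in\Lambda(B)$.
\end{itemize}

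\emph{Step 2 (uniqueness of the lift).} Suppose the tuple lies in some $H'\in\Lambda(B')$. Then $B=\pi(H_0)\le B'$. I expect a subtle point here: uniqueness is only claimed once we also fix $B'=B$, and this is exactly the bookkeeping in the formula, which indexes tuples by the subgroup their projections generate. So fix $B'=B$ and $H'\in\Lambda(B)$ containing the tuple. Then $H'\supseteq H_0C_B=H$. Both groups have order $|C_B|\,|B|$, since $|H'|=|H'\cap A|\,|\pi(H')|$, and therefore $H'=H$.

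\emph{Step 3 (the count).} Conversely, fix $B\le K$, fix $H\in\Lambda(B)$, and count the tuples in $H^r$ whose projections generate $B$. Any such tuple commutes pairwise because $H$ is abelian. Since $\pi|_H:H\to B$ is surjective with kernel $C_B$, each fibre has size $|C_B|$, so there are exactly $|C_B|^r\varphi_r(B)$ such tuples. Steps 1 and 2 show that these sets, over all pairs $(B,H)$ with $H\in\Lambda(B)$, partition $\Comm_r(G)$. Summing gives the first formula.

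\emph{Step 4 (Möbius form).} For the second formula, sort $B^r$ by the subgroup its entries generate: $|D|^r=\sum_{E\le D}\varphi_r(E)$ for every $D\le B$. Möbius inversion on the subgroup lattice of $B$ then gives $\varphi_r(B)=\sum_{D\le B}\mu(D,B)|D|^r$. Substituting this into the first formula yields the double sum.

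The main obstacle I expect is Step 1: checking that saturating by $C_B$ keeps the group abelian and that its kernel in $A$ is exactly $C_B$. Everything else is routine bookkeeping, provided tuples are indexed by the subgroup generated by their projections, as noted in Step 2.
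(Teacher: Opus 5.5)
Your proposal is correct and follows the paper's proof essentially step for step. You partition by the subgroup $B$ generated by the projections, saturate $\langle x_1,\dots,x_r\rangle$ by $C_B$ to obtain the unique lift in $\Lambda(B)$, count $|C_B|^r\varphi_r(B)$ tuples per lift, and finish with M\"obius inversion; the only cosmetic difference is that you get uniqueness from the containment $H'\supseteq H_0C_B$ plus the order count $|H'|=|C_B|\,|B|$, whereas the paper shows directly by lifting that every element of $H'$ lies in $C_BH_0$.
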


\begin{proof}
For each subgroup $B\le K$ let
\[
\mathcal T_B(r):=\{(x_1,\dots,x_r)\in \Hom(\mathbb Z^r,G):\ \langle \pi(x_1),\dots,\pi(x_r)\rangle=B\}.
\]
Then the sets $\mathcal T_B(r)$, for $B\le K$, partition $\Hom(\mathbb Z^r,G)$.

Fix $\mathbf x=(x_1,\dots,x_r)\in \mathcal T_B(r)$ and let $H:=\langle x_1,\dots,x_r\rangle$.
Then $H$ is abelian and $\pi(H)=B$.
If $a\in H\cap A$ and $h\in H$ projects to $b\in B$, then $ha=ah$ because $H$ is abelian, hence conjugation by $h$ fixes $a$.
Since conjugation by $h$ on $A$ agrees with the action of $b$, it follows that $a\in C_B$.
Thus $H\cap A\le C_B$.
Define
\[
\widehat H:=C_BH\le G.
\]
Because $C_B$ centralizes every element of $H$, the subgroup $\widehat H$ is abelian.
Also $\pi(\widehat H)=B$ and
\[
\widehat H\cap A=C_B(H\cap A)=C_B,
\]
so $\widehat H\in \Lambda(B)$.
This proves existence of a subgroup in $\Lambda(B)$ containing the tuple $\mathbf x$.

We claim that this subgroup is unique.
Suppose $\mathbf x$ lies in both $H_1,H_2\in \Lambda(B)$, and let again $H:=\langle x_1,\dots,x_r\rangle$.
Since $\pi(H)=B$, for each $j\in\{1,2\}$ and each $y\in H_j$ there exists $h\in H$ with $\pi(h)=\pi(y)$.
Then $yh^{-1}\in H_j\cap A=C_B$, so $y\in C_BH$.
Therefore $H_j=C_BH$ for $j=1,2$, proving $H_1=H_2$.

Fix now one subgroup $H\in \Lambda(B)$.
The projection map $H\to B$ is surjective with kernel $C_B$.
Hence for each generating $r$-tuple in $B^r$ there are exactly $|C_B|^r$ lifts to an $r$-tuple in $H^r$.
So the number of tuples in $H^r$ whose projection generates $B$ is
\[
|C_B|^r\,\varphi_r(B).
\]
Summing over the $\lambda(B)$ distinct subgroups in $\Lambda(B)$ gives
\[
|\mathcal T_B(r)|=\lambda(B)\,|C_B|^r\,\varphi_r(B).
\]
Finally, summing over all $B\le K$ proves the first displayed formula.

For the M\"obius inversion formula, note that every $r$-tuple in $B^r$ generates a unique subgroup $D\le B$.
Therefore
\[
|B|^r=\sum_{D\le B}\varphi_r(D).
\]
M\"obius inversion on the subgroup lattice of $B$ yields
\[
\varphi_r(B)=\sum_{D\le B}\mu_B(D)\,|D|^r.
\]
Substituting this into the first formula gives the second displayed expansion.
\end{proof}

\begin{theorem}[Coprime lift multiplicities]\label{thm:coprime-lambda}
Let $G=A\rtimes K$ with $A$ and $K$ finite abelian, and retain the notation
\[
C_B:=\C_A(B),
\qquad
\Lambda(B):=\{H\le G:H\text{ abelian},\ \pi(H)=B,\ H\cap A=C_B\},
\qquad
\lambda(B):=|\Lambda(B)|
\]
from Theorem~\ref{thm:abelian-exact-count}. Thus $\Lambda(B)$ is the family of abelian lifts of $B$ whose kernel in $A$ is exactly $C_B$. If $B\le K$ satisfies
\[
\gcd(|A|,|B|)=1,
\]
then
\[
\lambda(B)=|A:C_B|.
\]
Equivalently, the full-kernel abelian lifts of $B$ are exactly the preimages of the complements of $A/C_B$ in $(A/C_B)\rtimes B$, and there are $|A:C_B|$ of them.
\end{theorem}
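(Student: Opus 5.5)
The plan is to pass to the quotient by $C_B$ and reduce the count of $\Lambda(B)$ to a count of complements in a coprime semidirect product, where Schur--Zassenhaus and coprime fixed-point behaviour settle everything. Every $H\in\Lambda(B)$ satisfies $\pi(H)=B$, so $H\le \pi^{-1}(B)=AB=A\rtimes B$. First I will check that $C_B=\C_A(B)$ is normal in $AB$. It is normalized by $A$ because $A$ is abelian, and by $B$ because $B$ fixes it pointwise. So we may form $\overline{G}_B:=(AB)/C_B\cong (A/C_B)\rtimes B$.

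For $H\in\Lambda(B)$ we have $C_B\le H$, since $H\cap A=C_B$. Its image $\overline H=H/C_B$ meets $A/C_B$ trivially and maps onto $B$, so $\overline H$ is a complement to $A/C_B$. Conversely, the preimage $H$ of any complement $Q$ satisfies $H\cap A=C_B$ and $\pi(H)=B$. The correspondence theorem makes $H\mapsto H/C_B$ injective. Hence $\Lambda(B)$ is in bijection with the set of complements $Q$ whose preimage is abelian.

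Next I use coprimality. Since $\gcd(|A/C_B|,|B|)=1$, Schur--Zassenhaus says every complement is conjugate to $\overline B:=C_BB/C_B$ by an element $\bar a\in A/C_B$. The preimage of $\bar a\overline B\bar a^{-1}$ is $a(C_BB)a^{-1}$. This is abelian because $C_BB=C_B\times B$ is abelian, as $B$ and $C_B$ are abelian and commute. So every complement qualifies, and $\lambda(B)$ equals the number of complements. That number is the size of the conjugacy class of $\overline B$ under $A/C_B$, namely $|A/C_B : N_{A/C_B}(\overline B)|$. An element $\bar a$ normalizes $\overline B$ iff $[\bar a,\overline B]\le \overline B\cap A/C_B=1$, i.e.\ iff $\bar a\in \C_{A/C_B}(B)$. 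So
\[
\lambda(B)=|A/C_B : \C_{A/C_B}(B)|.
\]

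The main obstacle is the coprime fixed-point step: showing $\C_{A/C_B}(B)=1$, i.e.\ that fixed points of the quotient lift to fixed points of $A$. I would invoke the standard Fitting decomposition for a coprime action on an abelian group, $A=\C_A(B)\times[A,B]$. This gives a $B$-equivariant isomorphism $A/C_B\cong[A,B]$, and $\C_{[A,B]}(B)\le \C_A(B)\cap[A,B]=1$. If a self-contained argument is wanted, I would prove it by averaging. The map $e:=|B|^{-1}\sum_{b\in B}b$, with $|B|$ invertible on $A$, is an idempotent endomorphism with image $\C_A(B)$ and kernel $[A,B]$. Given this, $\lambda(B)=|A:C_B|$. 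The complements are exactly the conjugates $\bar a\overline B\bar a^{-1}$, which are pairwise distinct, and their preimages are the full-kernel abelian lifts, as stated.
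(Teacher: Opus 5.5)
Your proof is correct and has the same architecture as the paper's: identify $\Lambda(B)$ with the complements of $A/C_B$ in $(A/C_B)\rtimes B$, count them via Schur--Zassenhaus conjugacy as $|A/C_B:\C_{A/C_B}(B)|$, and show that $\C_{A/C_B}(B)=1$. There are two local differences. You get abelianness of the preimages because they are $A$-conjugates of $C_B\times B$, whereas the paper splits $C_B$ off inside each preimage by a second use of Schur--Zassenhaus. You also kill $\C_{A/C_B}(B)$ with the Fitting decomposition $A=\C_A(B)\times[A,B]$, whereas the paper notes that $b\mapsto b\cdot a-a$ is a homomorphism $B\to C_B$ and hence trivial by coprimality.
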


\begin{proof}
Set
\[
\overline A:=A/C_B,
\qquad
\overline G:=\overline A\rtimes B.
\]
If $H\in \Lambda(B)$, then $\overline H:=H/C_B$ is a complement to $\overline A$ in $\overline G$: indeed $\overline G=\overline A\,\overline H$ and
\[
\overline A\cap \overline H=(A/C_B)\cap (H/C_B)=(H\cap A)/C_B=1.
\]
Conversely, if $L\le \overline G$ is a complement to $\overline A$, let $\widetilde L$ be its full preimage in $A\rtimes B$. Then $\pi(\widetilde L)=B$ and
\[
\widetilde L\cap A=C_B.
\]
Also $|\widetilde L:C_B|=|L|=|B|$, and since $\gcd(|C_B|,|B|)=1$, Schur--Zassenhaus gives a complement $L_0\le \widetilde L$ to $C_B$. Because $L_0$ projects isomorphically onto $B$ and $B$ acts trivially on $C_B$, one has $[L_0,C_B]=1$. Hence
\[
\widetilde L=C_B\times L_0,
\]
so $\widetilde L$ is abelian. Thus $\widetilde L\in \Lambda(B)$. These two constructions are inverse to each other, so $\Lambda(B)$ is in bijection with the complements to $\overline A$ in $\overline G$.

Since $\gcd(|\overline A|,|B|)=1$, Schur--Zassenhaus implies that all complements to the normal Hall subgroup $\overline A$ are conjugate under $\overline A$. Hence the number of complements is the size of the $\overline A$-orbit of one complement, namely
\[
|\overline A:C_{\overline A}(B)|.
\]
It remains to show that $C_{\overline A}(B)=1$. Let $a+C_B\in \overline A$ be fixed by $B$. Then for each $b\in B$ one has
\[
\delta(b):=b\cdot a-a\in C_B.
\]
Because $B$ is abelian and acts trivially on $C_B$,
\[
\delta(bb')=bb'\cdot a-a=b\cdot(b'\cdot a-a)+(b\cdot a-a)=\delta(b')+\delta(b)
\]
for all $b,b'\in B$, so $\delta:B\to C_B$ is a homomorphism. But $\gcd(|B|,|C_B|)=1$, so every homomorphism $B\to C_B$ is trivial. Hence $\delta(b)=0$ for all $b$, which means $a\in C_B$. Therefore $a+C_B=0$, so $C_{\overline A}(B)=1$.

Thus the number of complements is
\[
|\overline A:C_{\overline A}(B)|=|\overline A|=|A:C_B|,
\]
and therefore $\lambda(B)=|A:C_B|$.
\end{proof}

\begin{corollary}[Coprime abelian split quotients: exact tuple count]\label{cor:coprime-exact-count}
Let $G=A\rtimes K$ with $A$ and $K$ finite abelian, and assume
\[
\gcd(|A|,|K|)=1.
\]
Then for every $r\ge 1$,
\[
|\Hom(\mathbb Z^r,G)|=\sum_{B\le K}|A:C_B|\,|C_B|^r\,\varphi_r(B)
=|A|\sum_{B\le K}|C_B|^{r-1}\,\varphi_r(B).
\]
Equivalently,
\[
|\Hom(\mathbb Z^r,G)|=\sum_{B\le K}\sum_{D\le B}|A:C_B|\,\mu_B(D)\,(|C_B|\,|D|)^r,
\]
and
\[
N_{\max}(G)=\sum_{\substack{B\le K\\ |C_B|\,|B|=m(G)}}|A:C_B|.
\]
\end{corollary}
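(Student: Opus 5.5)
The first two formulas follow by combining Theorem~\ref{thm:abelian-exact-count} with Theorem~\ref{thm:coprime-lambda}. Since $\gcd(|A|,|K|)=1$, every subgroup $B\le K$ satisfies $\gcd(|A|,|B|)=1$, so Theorem~\ref{thm:coprime-lambda} gives $\lambda(B)=|A:C_B|$ for all $B\le K$. Substituting this into the first formula of Theorem~\ref{thm:abelian-exact-count} gives
\[
|\Hom(\mathbb Z^r,G)|=\sum_{B\le K}|A:C_B|\,|C_B|^r\,\varphi_r(B).
\]
The identity $|A:C_B|\,|C_B|^r=|A|\,|C_B|^{r-1}$ then yields the second form. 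Substituting $\lambda(B)=|A:C_B|$ into the M\"obius-expanded formula of Theorem~\ref{thm:abelian-exact-count} gives the double-sum expression.

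For $N_{\max}(G)$, I would not read off coefficients from the Dirichlet expansion, since different pairs $(B,D)$ can contribute to the same base and signs could cancel. Instead I would use the bijection established in the proof of Theorem~\ref{thm:abelian-exact-count}. Every abelian $H\le G$ satisfies $H\cap A\le C_{\pi(H)}$, hence $|H|\le |C_{\pi(H)}|\,|\pi(H)|\le m(G)$, using Proposition~\ref{prop:semidirect-largest}. So an abelian $H$ has order $m(G)$ if and only if two things hold: $H\cap A=C_B$, where $B=\pi(H)$, and $|C_B|\,|B|=m(G)$. The first condition says exactly that $H\in\Lambda(B)$.

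This gives a partition
\[
\mathcal A_{\max}(G)=\bigsqcup_{B\le K,\ |C_B||B|=m(G)}\Lambda(B).
\]
The union is disjoint because $\pi(H)$ determines $B$. Counting both sides and applying Theorem~\ref{thm:coprime-lambda} gives
\[
N_{\max}(G)=\sum_{\substack{B\le K\\ |C_B|\,|B|=m(G)}}|A:C_B|.
\]

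The only delicate point is that last step: making sure maximum-order abelian subgroups correspond exactly to full-kernel lifts over the maximizing strata. The inequality chain settles this, because equality $|H|=m(G)$ forces equality at both steps.
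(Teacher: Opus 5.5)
Your proof is correct. For the tuple-count formulas it is exactly the paper's argument: substitute $\lambda(B)=|A:C_B|$ from Theorem~\ref{thm:coprime-lambda} into Theorem~\ref{thm:abelian-exact-count}.

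For $N_{\max}(G)$ you take a genuinely different route. The paper substitutes into Proposition~\ref{prop:abelian-coefficient}. That proof reads off the coefficient of $m(G)^r$ in the M\"obius double-sum expansion. It then matches this coefficient with the leading coefficient $N_{\max}(G)$ from Theorem~\ref{thm:dominant-abelian}, using uniqueness of coefficients in a finite sum of exponentials.

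Your cancellation worry does not arise there. A pair $(B,D)$ with $D<B$ has base $|C_B|\,|D|<|C_B|\,|B|\le m(G)$. So only diagonal terms $D=B$ reach the top base, and these carry $\mu_B(B)=1$ and $\lambda(B)\ge 0$.

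Your argument instead identifies the maximum-order abelian subgroups directly as $\bigsqcup_{|C_B||B|=m(G)}\Lambda(B)$. It does this through the chain $|H|=|H\cap A|\,|\pi(H)|\le |C_{\pi(H)}|\,|\pi(H)|\le m(G)$, with equality forcing both steps to be tight.

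This is more elementary: it needs no asymptotic comparison. It also gives a structural description of $\mathcal A_{\max}(G)$, not just its cardinality. As in the paper, coprimality enters only through the value of $\lambda(B)$.

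The paper's route, in exchange, ties $N_{\max}(G)$ explicitly to the top coefficient of the finite Dirichlet spectrum. That is the form it reuses for the pole-coefficient statements.
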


\begin{proof}
For every subgroup $B\le K$ one has $\gcd(|A|,|B|)=1$, so Theorem~\ref{thm:coprime-lambda} gives
\[
\lambda(B)=|A:C_B|.
\]
Substituting this into Theorem~\ref{thm:abelian-exact-count} proves the exact tuple-count formulas, and substituting into Proposition~\ref{prop:abelian-coefficient} proves the formula for $N_{\max}(G)$.
\end{proof}

\begin{corollary}[Coprime fixed-point-free action]\label{cor:coprime-fixedfree}
Let $G=A\rtimes K$ with $A$ and $K$ finite abelian, assume $\gcd(|A|,|K|)=1$, and suppose
\[
\C_A(b)=1\qquad (1\neq b\in K).
\]
Then for every $r\ge 1$,
\[
|\Hom(\mathbb Z^r,G)|=|A|^r+|A|\,(|K|^r-1).
\]
Moreover,
\[
m(G)=\max\{|A|,|K|\}
\]
and
\[
N_{\max}(G)=
\begin{cases}
1,& |A|>|K|,\\[0.3em]
|A|,& |A|<|K|,\\[0.3em]
1+|A|,& |A|=|K|.
\end{cases}
\]
\end{corollary}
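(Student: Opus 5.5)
The count formula will come from Corollary~\ref{cor:coprime-exact-count}: since $\C_A(b)=1$ for every $1\ne b\in K$, we have $C_B=\C_A(B)=1$ for every nontrivial $B\le K$. For $B=1$ we have $C_B=A$.

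The general formula is
\[
|\Hom(\mathbb Z^r,G)|=|A|\sum_{B\le K}|C_B|^{r-1}\varphi_r(B).
\]
The term $B=1$ contributes $|A|\cdot|A|^{r-1}\cdot 1=|A|^r$, because $\varphi_r(1)=1$. Every nontrivial $B$ contributes $|A|\,\varphi_r(B)$. Every $r$-tuple in $K^r$ generates exactly one subgroup, so $\sum_{B\le K}\varphi_r(B)=|K|^r$, and hence $\sum_{1\ne B\le K}\varphi_r(B)=|K|^r-1$. This gives
\[
|\Hom(\mathbb Z^r,G)|=|A|^r+|A|(|K|^r-1).
\]

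For $m(G)$ we use Proposition~\ref{prop:semidirect-largest}. Every $B\le K$ is abelian. The quantity $|C_B|\,|B|$ equals $|A|$ when $B=1$, and equals $|B|\le|K|$ when $B\ne1$, with equality only at $B=K$. So $m(G)=\max\{|A|,|K|\}$.

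For $N_{\max}(G)$ we use the formula
\[
N_{\max}(G)=\sum_{B:\,|C_B||B|=m(G)}|A:C_B|.
\]
Only two subgroups can attain the maximum. The subgroup $B=1$ attains it exactly when $|A|\ge|K|$, with weight $|A:A|=1$. The subgroup $B=K$ (taking $K\ne1$) attains it exactly when $|K|\ge|A|$, with weight $|A:1|=|A|$. This yields the three cases $1$, $|A|$, and $1+|A|$.

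As a cross-check, or as an alternative route avoiding the $N_{\max}$ formula, I would compare the exact count with Theorem~\ref{thm:dominant-abelian}. The leading coefficient of $|\Hom(\mathbb Z^r,G)|$ in $m(G)^r$ is read off directly from $|A|^r+|A||K|^r-|A|$.

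The degenerate case $K=1$ needs care: then $|A|>|K|$ unless $A=1$. If both are trivial, the "$1+|A|$" case would give $2$ instead of $1$, so the statement implicitly assumes $G\ne1$; I would note this.

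The main (minor) obstacle is the bookkeeping in the equality case. There $B=1$ and $B=K$ give genuinely distinct maximal abelian subgroups, namely $A$ and the $|A|$ conjugates of $K$, so they must be counted separately. No further difficulty is expected.
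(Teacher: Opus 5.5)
Your proposal is correct and follows the paper's proof essentially verbatim: you use $C_B=1$ for $B\neq 1$ together with $\sum_{B\le K}\varphi_r(B)=|K|^r$ in Corollary~\ref{cor:coprime-exact-count} to get the count, then apply Proposition~\ref{prop:semidirect-largest} and the $N_{\max}$ formula to the only two candidate strata, $B=1$ and $B=K$. One sharpening of your degenerate-case remark: since $\gcd(|A|,|K|)=1$, a tie $|A|=|K|$ forces $A=K=1$, so the third case never occurs for $G\neq 1$ and is wrong for $G=1$; this makes your equality-case bookkeeping (counting $A$ and the $|A|$ conjugates of $K$ separately) vacuous.
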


\begin{proof}
If $B=1$, then $C_B=A$. If $B>1$, choose $1\neq b\in B$; then
\[
C_B=\bigcap_{x\in B}\C_A(x)\le \C_A(b)=1,
\]
so $C_B=1$. Now Corollary~\ref{cor:coprime-exact-count} gives
\[
|\Hom(\mathbb Z^r,G)|=|A|^r+|A|\sum_{1<B\le K}\varphi_r(B).
\]
Every tuple in $K^r$ generates a unique subgroup of $K$, so
\[
|K|^r=\sum_{B\le K}\varphi_r(B)=1+\sum_{1<B\le K}\varphi_r(B),
\]
which proves the displayed formula for $|\Hom(\mathbb Z^r,G)|$. The formulas for $m(G)$ and $N_{\max}(G)$ follow from the explicit $C_B$ values in Proposition~\ref{prop:semidirect-largest} and Corollary~\ref{cor:coprime-exact-count}.
\end{proof}

\begin{proposition}[Abelian split quotients: exact leading coefficient]\label{prop:abelian-coefficient}
Let $G=A\rtimes K$ with $A$ and $K$ finite abelian, and retain the notation of Theorem~\ref{thm:abelian-exact-count}.
Then
\[
m(G)=\max_{B\le K}|C_B|\,|B|,
\qquad
N_{\max}(G)=\sum_{\substack{B\le K\\ |C_B|\,|B|=m(G)}}\lambda(B).
\]
Equivalently,
\[
|\Hom(\mathbb Z^r,G)|=
\left(\sum_{\substack{B\le K\\ |C_B|\,|B|=m(G)}}\lambda(B)\right)m(G)^r+O\!\bigl(b(G)^r\bigr).
\]
\end{proposition}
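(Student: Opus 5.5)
The plan is to read off both claims from the exact formula of Theorem~\ref{thm:abelian-exact-count} together with Proposition~\ref{prop:semidirect-largest} and Theorem~\ref{thm:dominant-abelian}. Since $K$ is abelian, every $B\le K$ is abelian. Proposition~\ref{prop:semidirect-largest} then gives $m(G)=\max_{B\le K}|C_B|\,|B|$ immediately.

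For $N_{\max}(G)$ I would argue directly with subgroups rather than through asymptotics. Let $M\le G$ be abelian of order $m(G)$ and set $B=\pi(M)$. The proof of Proposition~\ref{prop:semidirect-largest} shows $M\cap A\le C_B$ and $|M|=|M\cap A|\,|B|$. Hence $m(G)=|M|\le |C_B||B|\le m(G)$, which forces $M\cap A=C_B$ and $|C_B||B|=m(G)$. So $M\in\Lambda(B)$ for a $B$ attaining the maximum. Conversely, every $H\in\Lambda(B)$ with $|C_B||B|=m(G)$ is abelian of order $|H\cap A|\,|\pi(H)|=|C_B||B|=m(G)$. Because $B=\pi(H)$ is determined by $H$, the families $\Lambda(B)$ are pairwise disjoint. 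Thus $\mathcal A_{\max}(G)$ is the disjoint union of the $\Lambda(B)$ over maximizing $B$, which gives the stated sum.

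For the asymptotic form, the quickest route is to substitute the count just obtained into Theorem~\ref{thm:dominant-abelian}. As an independent check, one can also see it from the expansion $\sum_{B}\sum_{D\le B}\lambda(B)\mu_B(D)(|C_B||D|)^r$. Each base $|C_B||D|$ is the order of the abelian subgroup $C_B\rtimes D$, using $C_B\le C_D$. The base equals $m(G)$ only when $D=B$ and $B$ is maximizing, and then $\mu_B(B)=1$. All other bases are orders of abelian subgroups below $m(G)$, hence at most $b(G)$, so they contribute $O(b(G)^r)$.

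The only delicate point is the disjointness and exhaustiveness of the decomposition of $\mathcal A_{\max}(G)$, in particular that equality forces $M\cap A=C_B$ exactly. The squeeze argument above handles this.
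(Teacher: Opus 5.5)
Your proof is correct, but for the central identity it takes a different route from the paper. The paper obtains $N_{\max}(G)$ analytically. It writes $|\Hom(\mathbb Z^r,G)|$ using the M\"obius expansion of Theorem~\ref{thm:abelian-exact-count}. It then notes that the base $m(G)$ occurs only in the terms with $D=B$ and $|C_B|\,|B|=m(G)$, where $\mu_B(B)=1$. Finally it identifies the resulting coefficient with $N_{\max}(G)$ by matching it against the leading term of Theorem~\ref{thm:dominant-abelian}.

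You instead prove the identity structurally. The squeeze $m(G)=|M|=|M\cap A|\,|B|\le |C_B|\,|B|\le m(G)$ shows that $\mathcal A_{\max}(G)$ is the disjoint union of the families $\Lambda(B)$ over maximizing $B$. What your route buys:
\begin{itemize}
\item it needs neither the exact tuple count nor a comparison of exponential coefficients;
\item it describes the maximum-order abelian subgroups explicitly as full-kernel lifts;
\item it goes through verbatim for non-abelian $K$, with $B$ ranging over abelian subgroups and $\Lambda(B)$ defined the same way, where no exact count is available.
\end{itemize}
The paper's route, in exchange, places the leading coefficient directly inside the Dirichlet expansion that feeds Corollary~\ref{cor:abelian-explicit-spectrum}.

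Your side observation is also worthwhile: each base $|C_B|\,|D|$ is the order of the abelian subgroup $C_B\rtimes D$, because $C_B\le \C_A(D)$. This gives the $O(b(G)^r)$ error directly from the exact expansion, whereas the paper's proof leaves that point to Theorem~\ref{thm:dominant-abelian}. The observation does require $G$ non-abelian. With the convention $b(G)=0$ for abelian $G$, the lower terms cancel exactly rather than being bounded by $b(G)^r$. Your primary route through Theorem~\ref{thm:dominant-abelian} is unaffected by this.
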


\begin{proof}
Because $K$ is abelian, Proposition~\ref{prop:semidirect-largest} gives
\[
m(G)=\max_{B\le K}|C_B|\,|B|.
\]
Now Theorem~\ref{thm:abelian-exact-count} gives
\[
|\Hom(\mathbb Z^r,G)|=\sum_{B\le K}\sum_{D\le B}\lambda(B)\mu_B(D)\,(|C_B|\,|D|)^r.
\]
The largest exponential base appearing in this finite sum is
\[
\max_{B\le K}|C_B|\,|B|=m(G).
\]
Moreover, if $D<B$, then $|D|<|B|$, so $|C_B|\,|D|<|C_B|\,|B|$.
Hence the coefficient of $m(G)^r$ comes only from the terms with $D=B$, and since $\mu_B(B)=1$ it is precisely
\[
\sum_{\substack{B\le K\\ |C_B|\,|B|=m(G)}}\lambda(B).
\]
This proves the formula for $N_{\max}(G)$, and the displayed asymptotic follows immediately.
\end{proof}

\begin{theorem}[Cyclic split quotients: exact tuple count]\label{thm:cyclic-exact-count}
Let $G=A\rtimes \langle t\rangle$ with $A$ finite abelian and $|t|=\omega$. For each divisor $n\mid \omega$ let
\[
B_n:=\langle t^{\omega/n}\rangle\le \langle t\rangle,
\qquad
C_n:=\C_A(B_n).
\]
Then for every $r\ge 1$,
\[
|\Hom(\mathbb Z^r,G)|=\sum_{n\mid \omega}|A:C_n|\,|C_n|^r\,J_r(n),
\]
where $J_r(n)$ is the $r$th Jordan totient, equivalently
\[
J_r(n)=\#\{(b_1,\dots,b_r)\in B_n^r:\ \langle b_1,\dots,b_r\rangle=B_n\}
=\sum_{d\mid n}\mu(d)\Bigl(\frac{n}{d}\Bigr)^r.
\]
Equivalently,
\[
|\Hom(\mathbb Z^r,G)|=
\sum_{n\mid \omega}\sum_{d\mid n}\mu(d)\,|A:C_n|\Bigl(\frac{|C_n|\,n}{d}\Bigr)^r.
\]
\end{theorem}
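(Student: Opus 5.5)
The plan is to specialize Theorem~\ref{thm:abelian-exact-count} to $K=\langle t\rangle$, which is abelian, so that formula applies directly. The subgroups of $K$ are exactly the $B_n$ for $n\mid\omega$, and $C_{B_n}=C_n$. That leaves two things to identify: the generating-tuple count $\varphi_r(B_n)$ and the lift multiplicity $\lambda(B_n)$. We cannot use Theorem~\ref{thm:coprime-lambda}, since there is no coprimality hypothesis. Instead we must show directly that cyclicity alone forces $\lambda(B_n)=|A:C_n|$. This is the main step.

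\emph{Computing $\lambda(B_n)$.} Put $s:=t^{\omega/n}$, so $B_n=\langle s\rangle$ has order $n$, and write $A$ additively with $s$ acting as an automorphism. For $a\in A$ set $x_a:=as\in G$ and $H_a:=C_n\langle x_a\rangle$.
\begin{itemize}
\item $H_a$ is abelian. The generator $x_a$ acts on $A$ as $s$, and $s$ fixes $C_n$ pointwise.
\item $\pi(H_a)=B_n$. This is immediate since $\pi(x_a)=s$.
\item $H_a\cap A=C_n$. Every element of $H_a$ has the form $c\,x_a^k$ with $c\in C_n$, and it lies in $A$ exactly when $n\mid k$. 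We have $x_a^n=N(a):=a+s\cdot a+\cdots+s^{n-1}\cdot a$. This norm is $s$-invariant, so $N(a)\in \C_A(s)=C_n$. Hence $H_a\cap A=C_n$ and $H_a\in\Lambda(B_n)$.
\item Every member of $\Lambda(B_n)$ arises this way. If $H\in\Lambda(B_n)$, pick $x\in H$ with $\pi(x)=s$ and write $x=as$. Then $H\supseteq C_n\langle x\rangle=H_a$. Both groups have order $|C_n|\,n$, so $H=H_a$.
\item Counting. We have $H_a=H_{a'}$ iff $a's\in H_a$. The elements of $H_a$ projecting to $s$ are $c\,x_a^{1+ni}=c\,x_a\,N(a)^i$, and these lie in $C_n\,x_a$. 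So $H_a=H_{a'}$ iff $a'-a\in C_n$.
\end{itemize}
Therefore $\lambda(B_n)=|A:C_n|$. The only slightly delicate point is the norm-invariance argument showing $x_a^n\in C_n$ without any coprimality.

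\emph{Computing $\varphi_r(B_n)$.} For the cyclic group $B_n\cong C_n$ the subgroup lattice is the divisor lattice of $n$. The Möbius function in Theorem~\ref{thm:abelian-exact-count} therefore becomes the number-theoretic $\mu$, which gives $\varphi_r(B_n)=\sum_{d\mid n}\mu(d)(n/d)^r=J_r(n)$. Equivalently, an $r$-tuple fails to generate $B_n$ exactly when it lies in some maximal subgroup of index a prime $p\mid n$, and inclusion--exclusion over these gives the same sum. Substituting $\lambda(B_n)=|A:C_n|$ and $\varphi_r(B_n)=J_r(n)$ into Theorem~\ref{thm:abelian-exact-count} yields the first formula. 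Expanding $J_r(n)$ then gives the double-sum form.
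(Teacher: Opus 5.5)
Your proof is correct and uses the same ingredients as the paper's proof. These are the subgroups $C_n\langle x_a\rangle$ (the paper's $H_{n,a}=C_n\langle (a,b_n)\rangle$), the norm computation showing $x_a^n\in C_n$ without any coprimality, and the coset count giving $|A:C_n|$ such subgroups. The only difference is packaging: you derive $\lambda(B_n)=|A:C_n|$ and substitute it into Theorem~\ref{thm:abelian-exact-count}, whereas the paper reproves the partition and uniqueness directly in the cyclic case, using a B\'ezout combination $x_1^{u_1}\cdots x_r^{u_r}$ that projects to $b_n$.
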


\begin{proof}
Write $A$ additively in this proof, and let $\pi:G\to \langle t\rangle$ be the natural projection. For each divisor $n\mid \omega$ set
\[
b_n:=t^{\omega/n},
\qquad
\mathcal T_n(r):=\{(x_1,\dots,x_r)\in \Hom(\mathbb Z^r,G):\ \langle \pi(x_1),\dots,\pi(x_r)\rangle=B_n\}.
\]
Then the sets $\mathcal T_n(r)$, for $n\mid \omega$, partition $\Hom(\mathbb Z^r,G)$.

For $a\in A$ define
\[
H_{n,a}:=C_n\,\langle (a,b_n)\rangle\le A\rtimes B_n.
\]
Because $B_n$ acts trivially on $C_n$, the subgroup $H_{n,a}$ is abelian. Also
\[
(a,b_n)^n=\Bigl(\sum_{j=0}^{n-1} b_n^{\,j}\!\cdot a,1\Bigr)\in C_n,
\]
because its $A$-component is fixed by $b_n$, hence by all of $B_n$. Consequently $H_{n,a}\cap A=C_n$. Moreover
\[
H_{n,a}=H_{n,a'}\iff a'-a\in C_n.
\]
Indeed, if $a'-a\in C_n$, then $(a',b_n)=(a'-a,1)(a,b_n)\in H_{n,a}$, so $H_{n,a'}=H_{n,a}$. Conversely, if $H_{n,a}=H_{n,a'}$, then any element of either subgroup projecting to $b_n$ has $A$-component in both cosets $a+C_n$ and $a'+C_n$, forcing $a'-a\in C_n$. Hence there are exactly $|A:C_n|$ distinct such subgroups.

We claim that every tuple in $\mathcal T_n(r)$ lies in a unique subgroup $H_{n,a}$. Let
\[
\mathbf x=(x_1,\dots,x_r)\in \mathcal T_n(r),
\qquad
H:=\langle x_1,\dots,x_r\rangle.
\]
Then $H$ is abelian and $\pi(H)=B_n$. Write $\pi(x_i)=b_n^{k_i}$ with $k_i\in \mathbb Z/n\mathbb Z$. Since the projected tuple generates $B_n$, there exist integers $u_1,\dots,u_r$ such that
\[
u_1k_1+\cdots+u_rk_r\equiv 1\pmod n.
\]
Because the $x_i$ commute, the element
\[
x:=x_1^{u_1}\cdots x_r^{u_r}\in H
\]
satisfies $\pi(x)=b_n$. Write $x=(a,b_n)$ for some $a\in A$. If $y\in H\cap A$, then $y$ commutes with $x$, so $y\in C_n$. Thus
\[
H=(H\cap A)\langle x\rangle\le C_n\langle x\rangle=H_{n,a},
\]
proving existence. For uniqueness, suppose that $\mathbf x$ lies in both $H_{n,a}$ and $H_{n,a'}$. Then the element $x$ above lies in both subgroups. But the elements of $H_{n,a}$ projecting to $b_n$ are exactly $(c+a,b_n)$ with $c\in C_n$, and similarly for $H_{n,a'}$. Hence the $A$-component of $x$ lies in both cosets $a+C_n$ and $a'+C_n$, so $a'-a\in C_n$ and therefore $H_{n,a}=H_{n,a'}$.

Fix now one of the $|A:C_n|$ distinct subgroups $H_{n,a}$. The projection map $H_{n,a}\to B_n$ is surjective with kernel $C_n$. Therefore, for each generating $r$-tuple in $B_n^r$, there are exactly $|C_n|^r$ lifts to an $r$-tuple in $H_{n,a}^r$. Hence the number of tuples in $H_{n,a}^r$ whose projection generates $B_n$ is
\[
|C_n|^r\,J_r(n),
\]
where $J_r(n)$ counts the generating $r$-tuples in the cyclic group $B_n$ of order $n$.
Summing over the $|A:C_n|$ distinct subgroups $H_{n,a}$ gives
\[
|\mathcal T_n(r)|=|A:C_n|\,|C_n|^r\,J_r(n).
\]
Finally, summing over all divisors $n\mid \omega$ proves the first displayed formula.
The second follows from the standard Möbius expansion of the Jordan totient,
\[
J_r(n)=\sum_{d\mid n}\mu(d)\Bigl(\frac{n}{d}\Bigr)^r.
\qedhere
\]
\end{proof}

\begin{proposition}[Cyclic split quotients: exact leading coefficient]\label{prop:cyclic-coefficient}
Let $G=A\rtimes \langle t\rangle$ with $A$ finite abelian and $|t|=\omega$. For each divisor $n\mid \omega$ let
\[
B_n:=\langle t^{\omega/n}\rangle\le \langle t\rangle,
\qquad
C_n:=\C_A(B_n).
\]
Then
\[
m(G)=\max_{n\mid \omega} |C_n|\,n,
\qquad
N_{\max}(G)=\sum_{\substack{n\mid \omega\\ |C_n|\,n=m(G)}} |A:C_n|.
\]
Equivalently,
\[
|\Hom(\mathbb Z^r,G)|=
\left(\sum_{\substack{n\mid \omega\\ |C_n|\,n=m(G)}} |A:C_n|\right)m(G)^r+O\!\bigl(b(G)^r\bigr).
\]
In particular, if a unique divisor $n_0$ of $\omega$ maximizes $|C_{n_0}|\,n_0$, then
\[
N_{\max}(G)=|A:C_{n_0}|.
\]
\end{proposition}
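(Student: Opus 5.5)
The plan is to combine three earlier results: Proposition~\ref{prop:semidirect-largest} for the formula for $m(G)$, the exact count of Theorem~\ref{thm:cyclic-exact-count}, and the dominant asymptotic of Theorem~\ref{thm:dominant-abelian}. I will compare leading coefficients of two finite exponential sums.

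First, every subgroup of the cyclic group $\langle t\rangle$ is abelian and equals $B_n$ for a unique $n\mid\omega$, with $|B_n|=n$. So Proposition~\ref{prop:semidirect-largest} immediately gives $m(G)=\max_{n\mid\omega}|C_n|\,n$.

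Next, I will expand Theorem~\ref{thm:cyclic-exact-count} as
\[
|\Hom(\mathbb Z^r,G)|=\sum_{n\mid\omega}\sum_{d\mid n}\mu(d)\,|A:C_n|\,\Bigl(\frac{|C_n|\,n}{d}\Bigr)^r .
\]
This is a finite linear combination of $\beta^r$ with bases $\beta\le m(G)$. A base equals $m(G)$ only when $d=1$ and $|C_n|\,n=m(G)$, because $d>1$ strictly decreases the base. Since $\mu(1)=1$, the coefficient of $m(G)^r$ is $S:=\sum_{|C_n|n=m(G)}|A:C_n|$.

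Finally, Theorem~\ref{thm:dominant-abelian} gives $|\Hom(\mathbb Z^r,G)|=N_{\max}(G)m(G)^r+O(b(G)^r)$ with $b(G)<m(G)$. Both expressions are generalized power sums in $r$ with finitely many distinct positive bases. I will use the uniqueness of such expansions: divide by $m(G)^r$ and let $r\to\infty$. All remaining terms from the exact formula have bases strictly below $m(G)$, so their contribution tends to $0$, and the error term vanishes for the same reason. This forces $S=N_{\max}(G)$.

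The remaining assertions then follow directly. The displayed asymptotic is Theorem~\ref{thm:dominant-abelian} with $N_{\max}(G)$ replaced by $S$. If a unique divisor $n_0$ attains the maximum, the sum defining $S$ has the single term $|A:C_{n_0}|$.

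I do not expect a serious obstacle. The one point needing care is the bookkeeping showing that no other pair $(n,d)$ contributes to the base $m(G)$. This holds because $|C_n|\,n/d\le |C_n|\,n\le m(G)$, with equality throughout only if $d=1$ and $n$ is a maximizer. Distinct maximizing $n$ all contribute to the same base, which is why the coefficients add. One could avoid the limit argument by counting the abelian subgroups $H_{n,a}$ of order $m(G)$ directly, but the coefficient comparison is cleaner.
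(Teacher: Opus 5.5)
Your proposal is correct and follows the paper's proof. You get $m(G)$ from Proposition~\ref{prop:semidirect-largest}, read off the coefficient of $m(G)^r$ from the M\"obius expansion in Theorem~\ref{thm:cyclic-exact-count} (only $d=1$ with a maximizing $n$ contributes), and identify it with $N_{\max}(G)$ via Theorem~\ref{thm:dominant-abelian}; your explicit divide-by-$m(G)^r$ limit argument just spells out the coefficient comparison that the paper leaves implicit.
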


\begin{proof}
The formula for $m(G)$ is Proposition~\ref{prop:semidirect-largest} specialized to the cyclic quotient $\langle t\rangle$.
By Theorem~\ref{thm:cyclic-exact-count},
\[
|\Hom(\mathbb Z^r,G)|=
\sum_{n\mid \omega}\sum_{d\mid n}\mu(d)\,|A:C_n|\Bigl(\frac{|C_n|\,n}{d}\Bigr)^r.
\]
The largest exponential base appearing in this finite sum is
\[
\max_{n\mid \omega}|C_n|\,n=m(G),
\]
and it can occur only for the terms with $d=1$, because $d>1$ forces $(|C_n|n)/d<|C_n|n$.
Therefore the coefficient of $m(G)^r$ is precisely
\[
\sum_{\substack{n\mid \omega\\ |C_n|\,n=m(G)}} |A:C_n|,
\]
which proves the formula for $N_{\max}(G)$.
The displayed $\Hom$-asymptotic is then Theorem~\ref{thm:dominant-abelian}, and the final statement is immediate when the maximizing divisor $n_0$ is unique.
\end{proof}

\begin{corollary}[Uniform nontrivial fixed stratum]\label{cor:uniform-fixed-stratum}
Let $G=A\rtimes \langle t\rangle$ with $A$ finite abelian and $|t|=\omega>1$. Assume there exists a subgroup $F\le A$ such that
\[
\C_A(t^j)=F\qquad (1\le j\le \omega-1).
\]
Then
\[
m(G)=\max\{|A|,\,\omega|F|\}
\]
and
\[
N_{\max}(G)=
\begin{cases}
1,& |A|>\omega|F|,\\[0.3em]
|A:F|,& |A|<\omega|F|,\\[0.3em]
1+|A:F|,& |A|=\omega|F|.
\end{cases}
\]
\end{corollary}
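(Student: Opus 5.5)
The plan is to specialize Proposition~\ref{prop:cyclic-coefficient} to the hypothesis $\C_A(t^j)=F$ for $1\le j\le\omega-1$. Fix a divisor $n\mid\omega$. If $n=1$, then $B_1=1$ and $C_1=A$, so the $n=1$ stratum contributes the value $|C_1|\cdot 1=|A|$ and the weight $|A:C_1|=1$. If $n>1$, then $B_n=\langle t^{\omega/n}\rangle$ is nontrivial, and every nontrivial element of $B_n$ has the form $t^j$ with $1\le j\le\omega-1$. Hence
\[
C_n=\bigcap_{1\ne b\in B_n}\C_A(b)=F,
\]
and every nontrivial divisor $n$ contributes the value $|F|\,n$ with weight $|A:F|$.

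Next I will maximize. Proposition~\ref{prop:cyclic-coefficient} gives
\[
m(G)=\max\Bigl(|A|,\ \max_{1<n\mid\omega}|F|\,n\Bigr)=\max\{|A|,\ \omega|F|\}.
\]
Among the nontrivial divisors, $|F|\,n$ is strictly increasing in $n$, so within that family it is maximized only at $n=\omega$. Consequently, the set of divisors attaining $m(G)$ is: $\{1\}$ when $|A|>\omega|F|$, $\{\omega\}$ when $|A|<\omega|F|$, and $\{1,\omega\}$ when the two are equal. Here I use $\omega>1$, so that $1\ne\omega$ and the last case really involves two distinct divisors.

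Summing the weights $|A:C_n|$ over the maximizing divisors in the formula for $N_{\max}(G)$ from Proposition~\ref{prop:cyclic-coefficient} then gives $1$, $|A:F|$, and $1+|A:F|$ in the three cases, which is the claim.

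There is no real obstacle here. The only point that needs care is that $C_n=F$ for every $n>1$, and not something larger. This holds because the hypothesis is imposed on \emph{all} nontrivial powers $t^j$, so each $B_n$ with $n>1$ has all its nontrivial elements centralizing exactly $F$. The strictness of $|F|\,n<|F|\,\omega$ for proper divisors is what makes the count of maximizing divisors exact.
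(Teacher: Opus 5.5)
Your proposal is correct and is essentially the paper's proof. Both compute $C_1=A$ and $C_n=F$ for every $1<n\mid\omega$, and both then read off $m(G)$ and $N_{\max}(G)$ from Proposition~\ref{prop:cyclic-coefficient}, using that only the strata $n=1$ and $n=\omega$ can attain the maximum. Your extra remarks make explicit two points the paper leaves implicit: $n|F|$ is strictly increasing in $n$, and $\omega>1$ makes the tie case involve two distinct strata.
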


\begin{proof}
For $n=1$ one has $C_1=A$. For every nontrivial divisor $n\mid \omega$ one has $C_n=F$, so Proposition~\ref{prop:cyclic-coefficient} gives
\[
m(G)=\max\left\{|A|,\ \max_{1<n\mid \omega} n|F|\right\}=\max\{|A|,\omega|F|\}.
\]
Among the nontrivial divisors, the unique subgroup of largest order is $\langle t\rangle$ itself, so only the strata $n=1$ and $n=\omega$ can contribute to $N_{\max}(G)$. The displayed three cases now follow directly from Proposition~\ref{prop:cyclic-coefficient}.
\end{proof}

\begin{example}[Two basic semidirect products]\label{ex:semidirect-basic}
The cyclic exact-count theorem gives closed formulas for two familiar split extensions.

\smallskip
\noindent\emph{(i) The symmetric group $S_3=C_3\rtimes C_2$.}
Let $A=C_3$ and $K=C_2=\langle t\rangle$, where $t$ acts on $A$ by inversion.
Then $C_1=A$, $C_2=1$, and $J_r(2)=2^r-1$, so Theorem~\ref{thm:cyclic-exact-count} gives
\[
|\Hom(\mathbb Z^r,S_3)|=|A:C_1|\,|C_1|^rJ_r(1)+|A:C_2|\,|C_2|^rJ_r(2)=3^r+3(2^r-1).
\]
In particular,
\[
P_r(S_3)=\frac{3^r+3(2^r-1)}{6^r}=2^{-r}+3^{1-r}-3\cdot 6^{-r}.
\]
Also Proposition~\ref{prop:semidirect-largest} gives $m(S_3)=3$, while Proposition~\ref{prop:cyclic-coefficient} gives
\[
N_{\max}(S_3)=1.
\]
Hence
\[
\lambda_{\mathrm{prob}}(S_3)=\frac{3}{6}=\frac12,
\qquad
\lambda_{\mathrm{orb}}(S_3)=3,
\qquad
h_{\mathrm{prob}}(S_3)=\log 2.
\]

\smallskip
\noindent\emph{(ii) The dihedral group $D_{2n}=C_n\rtimes C_2$ $(n\ge 3)$.}
Let $A=C_n$ and again let $K=C_2=\langle t\rangle$ act by inversion.
Then $C_1=A$ and
\[
C_2=\{a\in C_n:a=a^{-1}\}=\{a\in C_n:a^2=1\},
\]
so $|C_2|=\gcd(n,2)$. Therefore Theorem~\ref{thm:cyclic-exact-count} gives
\[
|\Hom(\mathbb Z^r,D_{2n})|=n^r+\frac{n}{\gcd(n,2)}\,\gcd(n,2)^r\,(2^r-1)
= n^r+n\,\gcd(n,2)^{r-1}(2^r-1).
\]
Equivalently,
\[
P_r(D_{2n})=2^{-r}+\frac{\gcd(n,2)^{r-1}(2^r-1)}{2^r n^{r-1}}.
\]
Moreover Proposition~\ref{prop:semidirect-largest} gives
\[
m(D_{2n})=\max\{n,\,2\gcd(n,2)\}=n,
\]
and Corollary~\ref{cor:uniform-fixed-stratum} yields
\[
N_{\max}(D_{2n})=
\begin{cases}
1,& n\neq 4,\\[0.3em]
3,& n=4.
\end{cases}
\]
Thus
\[
\lambda_{\mathrm{prob}}(D_{2n})=\frac{n}{2n}=\frac12,
\qquad
\lambda_{\mathrm{orb}}(D_{2n})=n,
\qquad
h_{\mathrm{prob}}(D_{2n})=\log 2.
\]
The second candidate $2\gcd(n,2)$ is the size of a maximum-order abelian subgroup containing a reflection; it equals $2$ for odd $n$, equals $4$ for even $n$, and ties the rotation subgroup only in $D_8$, where the three maximum-order abelian subgroups are $\langle r\rangle$, $\langle r^2,s\rangle$, and $\langle r^2,sr\rangle$.
\end{example}

\begin{remark}[Nonsplit abelian extensions]\label{rem:nonsplit-extension}
For a general extension $1\to A\to G\to K\to 1$ with $A$ abelian, the argument above still shows that every abelian subgroup $M\le G$ projecting onto $B\le K$ satisfies
\[
|M|\le |\C_A(B)|\,|B|.
\]
However, equality need not occur, because an abelian subgroup $B\le K$ need not lift to an abelian subgroup of $G$; the extension cocycle may obstruct such a lift. Thus split semidirect products are the natural exact next step after direct products.
\end{remark}

\begin{remark}[Entropy normalizations and isoclinism]\label{rem:entropy-isoclinism}
Because the full hierarchy $\{P_r(G)\}_{r\ge 2}$ is an isoclinism invariant, so are the normalized growth base $\lambda_{\mathrm{prob}}(G)$ and the entropy $h_{\mathrm{prob}}(G)$.
In particular, they cannot distinguish groups inside a fixed isoclinism family.
By contrast, the orbit-side normalization remembers abelian inflation.
If $A$ is abelian, then $G\times A$ is isoclinic to $G$ since
\[
(G\times A)/\Z(G\times A)\cong G/\Z(G),\qquad (G\times A)'=G',
\]
and one has
\[
\lambda_{\mathrm{prob}}(G\times A)=\lambda_{\mathrm{prob}}(G),\qquad
\lambda_{\mathrm{orb}}(G\times A)=|A|\,\lambda_{\mathrm{orb}}(G),
\]
equivalently,
\[
h_{\mathrm{prob}}(G\times A)=h_{\mathrm{prob}}(G),\qquad
h_{\mathrm{orb}}(G\times A)=h_{\mathrm{orb}}(G)+\log |A|.
\]
Thus the probability entropy measures the isoclinism class, whereas the orbit entropy differentiates only to the extent that it records the size of an abelian central inflation.
\end{remark}

\begin{corollary}[Minimal entropy stratum]\label{cor:min-entropy-stratum}
Let $G$ be a non-abelian finite group, and let $p$ be the smallest prime divisor of $|G|$.
Then
\[
\lambda_{\mathrm{prob}}(G)\le \frac{1}{p},
\qquad
h_{\mathrm{prob}}(G)\ge \log p.
\]
Equality holds if and only if $G$ has an abelian subgroup of index $p$.
In that case such a subgroup is automatically normal.
\end{corollary}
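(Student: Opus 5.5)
By Corollary~\ref{cor:entropy-bridge}, $\lambda_{\mathrm{prob}}(G)=m(G)/|G|=1/|G:A|$, where $A$ is any abelian subgroup of maximum order. The plan is therefore to reduce both claims to a statement about the minimal index of an abelian subgroup.

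\emph{Upper bound.} Since $G$ is non-abelian, $A\neq G$, so $|G:A|\ge 2$. The index $|G:A|$ divides $|G|$ by Lagrange, and every divisor of $|G|$ exceeding $1$ is at least the smallest prime divisor $p$. Hence $|G:A|\ge p$, giving $\lambda_{\mathrm{prob}}(G)\le 1/p$ and $h_{\mathrm{prob}}(G)=\log|G:A|\ge\log p$.

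\emph{Equality.} Equality holds exactly when $\min\{|G:A|: A\le G \text{ abelian}\}=p$. If some abelian subgroup $A_0$ has index $p$, then $m(G)\ge |G|/p$. Every abelian subgroup is proper, so $m(G)\le |G|/p$ by the same divisor argument, and thus $m(G)=|G|/p$. Conversely, equality means $m(G)=|G|/p$, so a maximum-order abelian subgroup has index $p$.

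\emph{Normality.} This is the only step needing a genuine group-theoretic input. I would invoke the standard fact that a subgroup $H$ whose index is the smallest prime $p$ dividing $|G|$ is normal. The argument runs as follows. Let $G$ act on the $p$ cosets of $H$; this gives a homomorphism $G\to S_p$ with kernel $N\le H$. Then $|G:N|$ divides $p!$, and it also divides $|G|$. Every prime factor of $|G:N|$ is $\ge p$, and $p^2\nmid p!$, so $|G:N|$ divides $p$. Since $N\le H$ forces $|G:N|\ge p$, we get $|G:N|=p$, hence $N=H$ and $H$ is normal.

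I expect no real obstacle. The only point needing care is that the equality case uses the maximum-order abelian subgroup, via $m(G)$ and Corollary~\ref{cor:entropy-bridge}, rather than an arbitrary abelian subgroup. This is handled by the observation that $m(G)\le |G|/p$ for non-abelian $G$.
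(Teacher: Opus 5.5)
Your proposal is correct and follows the paper's proof essentially step for step. You reduce via Corollary~\ref{cor:entropy-bridge} to the minimal index of an abelian subgroup, bound that index below by $p$ using Lagrange, and prove normality through the coset action $G\to S_p$. The only cosmetic difference is the last step: you conclude $H=\ker$ by comparing indices with $\ker\le H$, whereas the paper observes that a transitive group of order $p$ acts regularly, so the point stabilizer $A$ equals the kernel.
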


\begin{proof}
Any abelian subgroup of a non-abelian finite group is proper, so its index is at least $p$.
Hence $m(G)\le |G|/p$, which is equivalent to the displayed inequalities via Corollary~\ref{cor:entropy-bridge}.
Equality holds if and only if $m(G)=|G|/p$, i.e.\ if and only if $G$ has an abelian subgroup of index $p$.
If $A\le G$ has index $p$, then the action of $G$ on $G/A$ gives a transitive homomorphism $\varphi:G\to S_p$.
Its image $\varphi(G)$ has order divisible by $p$, while every prime divisor of $|\varphi(G)|$ also divides $|G|$ and is therefore at least $p$.
Since $|\varphi(G)|$ divides $p!$, this forces $|\varphi(G)|=p$.
A transitive permutation group of order $p$ acts regularly, so the stabilizer of a point is trivial.
Therefore the stabilizer $A$ equals $\ker(\varphi)$, and hence $A\trianglelefteq G$.
\end{proof}

\begin{remark}[Relation to earlier orbit-count growth results]\label{rem:kpp-growth}
Kaur--Prajapati--Prasad \cite{KaurPrajapatiPrasad2019} proved that the commuting-orbit generating series $B_G(t)$ from the introduction is rational and that $\kappa_r(G)$ has exponential growth factor $m(G)$.
Theorem~\ref{thm:dominant-abelian} sharpens this in two ways: it identifies the leading coefficient in the growth of $\kappa_r(G)$, and after dividing by $|G|^r$ it simultaneously yields the exact dominant term for $P_r(G)$. Thus the two root limits above are not independent new invariants but complementary normalizations of the same maximum-order abelian datum $m(G)$; the normalized base $m(G)/|G|$ is exactly the isoclinism-invariant normalization highlighted by Kaur--Prajapati--Prasad.
\end{remark}

\begin{corollary}[Rank-generating series]\label{cor:rank-series}
With $\mathcal P_G(z)$ and $B_G(t)$ as introduced in the introduction, one has
\[
\mathcal P_G(z)=\frac{B_G(z/|G|)-1}{z}.
\]
In particular $\mathcal P_G(z)$ is rational. Its radius of convergence is
\[
R_{\mathrm{prob}}(G)=\frac{|G|}{m(G)}.
\]
If $G$ is non-abelian, then $R_{\mathrm{prob}}(G)>1$, so both
\[
\Sigma_G:=\sum_{r\ge 2}P_r(G)
\qquad\text{and}\qquad
\Alt_G:=\sum_{r\ge 2}(-1)^rP_r(G)
\]
converge absolutely. If $G$ is abelian, then $\mathcal P_G(z)=1/(1-z)$, so neither series converges in the usual sense.
\end{corollary}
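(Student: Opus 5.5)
The identity follows from Theorem~\ref{thm:burnside}, and everything else from the exact expansion of $|\Hom(\mathbb Z^r,G)|$ derived inside the proof of Theorem~\ref{thm:dominant-abelian}.

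\emph{The identity.} Recall $B_G(t)=\sum_{r\ge0}\kappa_r(G)t^r$ with $\kappa_0(G)=1$. By Theorem~\ref{thm:burnside}, $\kappa_r(G)=|G|^rP_{r+1}(G)$ for $r\ge1$. Substituting $t=z/|G|$ gives
\[
B_G(z/|G|)-1=\sum_{r\ge1}P_{r+1}(G)z^r=z\sum_{r\ge2}P_r(G)z^{r-1}\cdot z^{-1}\cdot z .
\]
Reindexing, this equals $z\,\mathcal P_G(z)$. The identity holds as formal power series and on the common disc of convergence.

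\emph{Rationality.} I would not cite Kaur--Prajapati--Prasad here, but argue directly. Every commuting tuple lies in some maximal abelian subgroup, so $\Comm_r(G)=\bigcup_{M\in\mathcal M(G)}M^r$. Intersections of the sets $M^r$ are $(\bigcap M)^r$. Inclusion--exclusion therefore gives
\[
|\Hom(\mathbb Z^r,G)|=\sum_{S}\varepsilon_S\,|H_S|^r ,
\]
a finite integer combination over nonempty $S\subseteq\mathcal M(G)$, with $H_S=\bigcap_{M\in S}M$ and $\varepsilon_S=(-1)^{|S|+1}$. Dividing by $|G|^r$ shows $P_r(G)$ is a finite combination of the geometric terms $(|H|/|G|)^r$. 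Then $\mathcal P_G(z)$ is a finite sum of geometric series $\sum_{r\ge2}\varepsilon(|H|/|G|)^rz^{r-2}$, each rational with its only pole at $z=|G|/|H|$.

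\emph{Radius of convergence.} The radius is the distance to the nearest pole that survives cancellation. This is the only delicate point, since coefficients of equal bases might cancel. Theorem~\ref{thm:dominant-abelian} settles it: $P_r(G)=N_{\max}(G)(m(G)/|G|)^r+O((b(G)/|G|)^r)$ with $N_{\max}(G)\ge1$. So the coefficient of the base $m(G)/|G|$ is nonzero, and no larger base occurs because every $|H_S|\le m(G)$. Hence $\limsup P_r(G)^{1/r}=m(G)/|G|$ and $R_{\mathrm{prob}}(G)=|G|/m(G)$. Alternatively, this follows from Cauchy--Hadamard together with the root limit already proved.

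\emph{Convergence of $\Sigma_G$ and $\Alt_G$.} If $G$ is non-abelian, then $m(G)<|G|$, so $R_{\mathrm{prob}}(G)>1$. Evaluating at $z=\pm1$ inside the disc gives absolute convergence of both series.

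\emph{The abelian case.} Here $P_r(G)=1$ for all $r$, so $\mathcal P_G(z)=\sum z^{r-2}=1/(1-z)$. The terms do not tend to zero, so neither $\Sigma_G$ nor $\Alt_G$ converges.
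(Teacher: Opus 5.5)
Your proof is correct. For the identity, the radius of convergence (your Cauchy--Hadamard alternative is exactly the paper's argument), the convergence at $z=\pm1$, and the abelian case, it follows the paper.

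The genuine difference is the rationality step. The paper cites Kaur--Prajapati--Prasad for rationality of $B_G$ and transfers it to $\mathcal P_G$ through the identity. You instead prove it directly by inclusion--exclusion over the covering $\Comm_r(G)=\bigcup_{M\in\mathcal M(G)}M^r$, the same covering already used in the proof of Theorem~\ref{thm:dominant-abelian}. This is more self-contained, and it recovers the rationality of $B_G$ as a byproduct rather than assuming it. It also gives more than rationality: an explicit partial-fraction expansion of $\mathcal P_G(z)$ whose poles lie at indices $|G:H_S|$ of abelian subgroups. After grouping equal indices, this is essentially the finite-spectrum expansion of Theorem~\ref{thm:dirichlet-spectrum}, obtained without appeal to Torres-Giese. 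Uniqueness of finite exponential-sum representations identifies your grouped coefficients with the $c_m(G)$, and indeed Rota's crosscut theorem on the coatoms of $L_2(G)$ shows that your signed counts $\sum_{S:\,H_S=A}\varepsilon_S$ are exactly $-\mu_2(A,G)$. Your expansion also makes the ``nearest surviving pole'' description of the radius rigorous, with Theorem~\ref{thm:dominant-abelian} ($N_{\max}(G)\ge1$) ruling out cancellation at the top base $m(G)/|G|$. The paper's route is shorter but leans on two external citations for facts your argument derives internally.

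One slip to fix: the middle expression in your first display, $z\sum_{r\ge2}P_r(G)z^{r-1}\cdot z^{-1}\cdot z$, simplifies to $\sum_{r\ge2}P_r(G)z^{r}$, which is off by a factor of $z$. It should read $\sum_{r\ge2}P_r(G)z^{r-1}$, which does equal $z\,\mathcal P_G(z)$ as you then state.
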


\begin{proof}
Using $P_r(G)=\kappa_{r-1}(G)/|G|^{r-1}$ for $r\ge 2$, we compute
\[
\mathcal P_G(z)
=\sum_{n\ge 0}\frac{\kappa_{n+1}(G)}{|G|^{n+1}}z^n
=\frac{1}{z}\sum_{n\ge 1}\kappa_n(G)\left(\frac{z}{|G|}\right)^n
=\frac{B_G(z/|G|)-1}{z}.
\]
Rationality now follows from \cite{KaurPrajapatiPrasad2019}. By the Cauchy--Hadamard formula and Corollary~\ref{cor:entropy-bridge},
\[
R_{\mathrm{prob}}(G)^{-1}
=\limsup_{n\to\infty}P_{n+2}(G)^{1/n}
=\lim_{r\to\infty}P_r(G)^{1/r}
=\frac{m(G)}{|G|},
\]
so $R_{\mathrm{prob}}(G)=|G|/m(G)$. If $G$ is non-abelian then $m(G)<|G|$, hence $R_{\mathrm{prob}}(G)>1$ and the evaluations at $z=1$ and $z=-1$ lie inside the disc of convergence. If $G$ is abelian, then $P_r(G)=1$ for all $r\ge 2$, hence $\mathcal P_G(z)=\sum_{n\ge 0}z^n=1/(1-z)$.
\end{proof}

\begin{theorem}[Finite Dirichlet spectrum and pole expansion]\label{thm:dirichlet-spectrum}
Assume that $G$ is non-abelian. Let $N_2(G)$ be the poset of abelian subgroups of $G$, and let $\mu_2$ be the Möbius function of $L_2(G):=N_2(G)\cup\{G\}$ in the sense of Torres-Giese \cite[Prop.~4.2]{TorresGiese2012}. For each integer $m\ge 2$ set
\[
c_m(G):=-\sum_{\substack{A\in N_2(G)\\ |G:A|=m}}\mu_2(A,G).
\]
Then only finitely many $c_m(G)$ are nonzero and, for every $r\ge 2$,
\begin{equation}\label{eq:dirichlet-spectrum}
P_r(G)=\sum_{m\ge 2}\frac{c_m(G)}{m^r}.
\end{equation}
Consequently,
\begin{equation}\label{eq:PG-partial-fractions}
\mathcal P_G(z)=\sum_{m\ge 2}\frac{c_m(G)}{m(m-z)}.
\end{equation}
If
\[
m_*:=\frac{|G|}{m(G)},
\]
then
\[
c_{m_*}(G)=N_{\max}(G).
\]
Equivalently, the first pole of $\mathcal P_G(z)$ is at $z=m_*=|G|/m(G)$, and the coefficient of $(m_*-z)^{-1}$ is
\[
\frac{N_{\max}(G)}{m_*}=\frac{N_{\max}(G)m(G)}{|G|}.
\]
\end{theorem}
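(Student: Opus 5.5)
The plan is to derive \eqref{eq:dirichlet-spectrum} from a Möbius inversion over the poset of abelian subgroups, i.e.\ to reprove Torres-Giese's formula in the normalization used here. Every commuting $r$-tuple generates a unique abelian subgroup. Hence for each $A\in N_2(G)$ we have
\[
|A|^r=\sum_{\substack{D\in N_2(G)\\ D\le A}}\varphi_r(D),
\]
where $\varphi_r(D)$ counts generating $r$-tuples of $D$. We also have $|\Comm_r(G)|=\sum_{D\in N_2(G)}\varphi_r(D)$. Set $f(A):=|A|^r$ for $A\in N_2(G)$ and $f(G):=0$. Then $g(X):=\sum_{D\le X,\,D\in N_2(G)}\varphi_r(D)$ satisfies $g(A)=|A|^r$ for abelian $A$ and $g(G)=|\Comm_r(G)|$ (since $G$ is non-abelian, $G\notin N_2(G)$).

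Möbius inversion in $L_2(G)$ expresses the value at the top element $G$ through the values below it. Define $h$ on $L_2(G)$ by $h(D)=\varphi_r(D)$ for $D\in N_2(G)$, with $h(G)$ chosen so that $\sum_{X\le G}h(X)=0$. Inverting this relation gives $\sum_{A\in N_2(G)}\mu_2(A,G)\,|A|^r+\mu_2(G,G)\cdot 0=h(G)$. On the other hand, $h(G)=-\sum_{D\in N_2(G)}\varphi_r(D)=-|\Comm_r(G)|$. Therefore
\[
|\Comm_r(G)|=-\sum_{A\in N_2(G)}\mu_2(A,G)\,|A|^r .
\]
Dividing by $|G|^r$ and grouping the terms by index $m=|G:A|\ge 2$ gives \eqref{eq:dirichlet-spectrum}. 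Finiteness is automatic because $N_2(G)$ is finite. The only delicate point is sign and convention bookkeeping against \cite[Prop.~4.2]{TorresGiese2012}; I would simply verify it independently as above. The formula is claimed for $r\ge 2$, but the derivation works for all $r\ge1$.

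For \eqref{eq:PG-partial-fractions}, substitute \eqref{eq:dirichlet-spectrum} into $\mathcal P_G(z)=\sum_{r\ge2}P_r(G)z^{r-2}$ and swap the finite $m$-sum with the geometric series. Since $\sum_{r\ge2}m^{-r}z^{r-2}=m^{-2}(1-z/m)^{-1}=1/(m(m-z))$, this yields \eqref{eq:PG-partial-fractions} for $|z|<2$, and then as rational functions by continuation. It is consistent with Corollary~\ref{cor:rank-series}.

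To identify $c_{m_*}(G)$, the smallest index with $c_m\neq0$ must be $m_*$, and its coefficient must equal $N_{\max}(G)$. I would do this in two ways, either of which suffices. \emph{Directly}: if $A$ is abelian of maximum order, then the interval $(A,G)$ in $L_2(G)$ is empty, since no abelian subgroup strictly contains $A$. Hence $\mu_2(A,G)=-1$, so each such $A$ contributes $+1$, and $c_{m_*}=N_{\max}(G)$. No abelian subgroup has index below $m_*$, so $c_m=0$ for $m<m_*$. \emph{Alternatively}: compare \eqref{eq:dirichlet-spectrum} with Theorem~\ref{thm:dominant-abelian}. The bases $m^{-r}$ are distinct, so uniqueness of exponential-sum expansions forces the coefficient of $(m(G)/|G|)^r$ to equal $N_{\max}(G)$. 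Since $c_{m_*}=N_{\max}(G)\neq0$, the smallest pole of \eqref{eq:PG-partial-fractions} is $z=m_*$, and the coefficient of $(m_*-z)^{-1}$ is $c_{m_*}/m_*=N_{\max}(G)m(G)/|G|$. The main obstacle is just the Möbius-inversion sign convention in the first step.
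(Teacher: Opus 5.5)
Your proposal is correct, but it argues differently from the paper at both substantive steps.

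The paper does not rederive the spectral formula. It quotes Torres-Giese's Prop.~4.2 for $P_r(G)=-\sum_{A\in N_2(G)}\mu_2(A,G)\,|G:A|^{-r}$. It then identifies $c_{m_*}(G)$ by comparing the resulting finite exponential sum with Theorem~\ref{thm:dominant-abelian} and reading off the leading coefficient. That is your ``alternative.''

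You proceed differently in two ways:
\begin{enumerate}
\item You prove the Torres-Giese formula yourself by M\"obius inversion on $L_2(G)$. The inputs are that an $r$-tuple lies in $\Comm_r(G)$ iff it generates an abelian subgroup, and that the cumulative function of your $h$ is $|A|^r$ on $N_2(G)$ and $0$ at the top $G$. Your signs are right, and the derivation indeed works for every $r\ge 1$.
\item You compute $c_{m_*}(G)$ directly. The abelian subgroups of index $m_*$ are exactly the maximum-order ones. Each is a coatom of $L_2(G)$, so $\mu_2(A,G)=-1$, and each contributes $+1$.
\end{enumerate}

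What your route buys is a self-contained proof that does not depend on the external citation's sign conventions or on Theorem~\ref{thm:dominant-abelian}. It even recovers that theorem for non-abelian $G$ as a byproduct: the remaining terms $-\mu_2(A,G)|A|^r$ have $|A|\le b(G)$, giving the $O(b(G)^r)$ error. It also explains structurally why the leading coefficient is a plain count: every inclusion-maximal abelian subgroup has $\mu_2(A,G)=-1$, and at the minimal index only maximum-order ones occur.

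What the paper's route buys is brevity. It also lets the theorem serve as a bridge showing that the M\"obius-theoretic spectrum and the inclusion--exclusion asymptotic agree, at the cost of leaning on the cited formula.
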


\begin{proof}
In the notation of Torres-Giese, our $P_r(G)$ is $P_2(G,r)$; by \cite[Prop.~4.2]{TorresGiese2012},
\[
P_r(G)= -\sum_{A\in N_2(G)} \frac{\mu_2(A,G)}{|G:A|^r}.
\]
Grouping terms by the index $m=|G:A|$ gives \eqref{eq:dirichlet-spectrum}. Since $G$ has only finitely many subgroups, only finitely many $c_m(G)$ are nonzero.

Summing the geometric series for each fixed $m$ gives
\[
\sum_{r\ge 2}\frac{z^{r-2}}{m^r}
=\frac{1}{m^2}\sum_{n\ge 0}\Bigl(\frac{z}{m}\Bigr)^n
=\frac{1}{m(m-z)},
\]
which proves \eqref{eq:PG-partial-fractions}.

Now $m_*$ is the minimum index of an abelian subgroup of $G$. Hence every index $m\neq m_*$ that occurs in \eqref{eq:dirichlet-spectrum} satisfies $m>m_*$, so
\[
P_r(G)=c_{m_*}(G)\,m_*^{-r}+O(\theta^r)
\]
for some $\theta<m_*^{-1}$. On the other hand, Theorem~\ref{thm:dominant-abelian} gives
\[
P_r(G)=N_{\max}(G)\,m_*^{-r}+O\!\Bigl(\Bigl(\frac{b(G)}{|G|}\Bigr)^r\Bigr).
\]
Comparing the dominant coefficients shows that $c_{m_*}(G)=N_{\max}(G)$. The pole statement is immediate from \eqref{eq:PG-partial-fractions}.
\end{proof}

\begin{corollary}[Explicit Dirichlet coefficients for abelian split quotients]\label{cor:abelian-explicit-spectrum}
Assume that $G=A\rtimes K$ is non-abelian, with $A$ and $K$ finite abelian, and retain the notation of Theorem~\ref{thm:abelian-exact-count}. Then for every integer $m\ge 2$,
\[
c_m(G)=\sum_{\substack{B\le K,\ D\le B\\ m=|A:C_B|\,|K:D|}} \lambda(B)\mu_B(D).
\]
Equivalently, if
\[
m_{B,D}:=|A:C_B|\,|K:D|,
\]
then
\[
\mathcal P_G(z)=\sum_{B\le K}\sum_{D\le B}
\frac{\lambda(B)\mu_B(D)}{m_{B,D}(m_{B,D}-z)}.
\]
\end{corollary}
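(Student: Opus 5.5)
The plan is to read off the Dirichlet coefficients directly from the exact tuple count of Theorem~\ref{thm:abelian-exact-count}, and then to identify them with the $c_m(G)$ of Theorem~\ref{thm:dirichlet-spectrum} through uniqueness of finite exponential expansions. Dividing the second displayed formula of Theorem~\ref{thm:abelian-exact-count} by $|G|^r=(|A||K|)^r$, each term becomes
\[
\lambda(B)\mu_B(D)\Bigl(\frac{|C_B|\,|D|}{|A|\,|K|}\Bigr)^r=\frac{\lambda(B)\mu_B(D)}{m_{B,D}^{\,r}},
\qquad m_{B,D}=|A:C_B|\,|K:D|,
\]
since $|A|/|C_B|=|A:C_B|$ and $|K|/|D|=|K:D|$. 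Grouping by the value of $m_{B,D}$ then gives, for every $r\ge 1$ (in particular every $r\ge2$),
\[
P_r(G)=\sum_{m\ge1}\frac{d_m}{m^r},\qquad d_m:=\sum_{\substack{B\le K,\ D\le B\\ m_{B,D}=m}}\lambda(B)\mu_B(D).
\]

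The key step is to compare this with \eqref{eq:dirichlet-spectrum}. Both are finite sums of the form $\sum_m e_m m^{-r}$ valid for all $r\ge2$. Finitely many distinct bases $m^{-1}$ make the sequences $r\mapsto m^{-r}$ linearly independent on $\{r\ge2\}$; a Vandermonde argument on any $k$ consecutive values of $r$ suffices. Hence $d_m=c_m(G)$ for every $m$, provided the index sets match. The one subtlety, and the step I expect to need care, is the base $m=1$. It occurs only for $B=D$ with $C_B=A$ and $D=K$, that is, $B=K$ acting trivially on $A$, which would make $G=A\times K$ abelian. Since $G$ is non-abelian, either $C_K\ne A$, or $B=K$ fails for the relevant terms. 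In either case no pair $(B,D)$ has $m_{B,D}=1$, so $d_1=0$, which is consistent with the sum in Theorem~\ref{thm:dirichlet-spectrum} running over $m\ge2$. Independence then gives the coefficient identity for all $m\ge2$.

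Finally, the series expression follows exactly as in the proof of Theorem~\ref{thm:dirichlet-spectrum}. Each pair contributes
\[
\sum_{r\ge2}\frac{\lambda(B)\mu_B(D)}{m_{B,D}^{\,r}}z^{r-2}=\frac{\lambda(B)\mu_B(D)}{m_{B,D}(m_{B,D}-z)}.
\]
Summing this finite collection over the pairs $(B,D)$ yields the displayed formula for $\mathcal P_G(z)$, as an identity of rational functions and of power series for $|z|<|G|/m(G)$. Alternatively, one can substitute $c_m(G)=d_m$ into \eqref{eq:PG-partial-fractions} and ungroup the terms by pairs $(B,D)$.
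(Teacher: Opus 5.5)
Your proposal is correct and follows the paper's proof. Like the paper, you divide the M\"obius-expanded count of Theorem~\ref{thm:abelian-exact-count} by $|G|^r$, use non-abelianness to rule out $m_{B,D}=1$, group terms by index, and sum geometric series. One difference is in your favour: you make explicit the linear-independence (Vandermonde) step that identifies the grouped coefficients with the $c_m(G)$ of Theorem~\ref{thm:dirichlet-spectrum}, which the paper leaves implicit in ``grouping equal indices.'' Your treatment of the $m=1$ case is worded confusingly, since the clause about $B=K$ failing is redundant: $m_{B,D}=1$ already forces $D=B=K$ and $C_K=A$. The conclusion there is still right.
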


\begin{proof}
By Theorem~\ref{thm:abelian-exact-count},
\[
P_r(G)=\sum_{B\le K}\sum_{D\le B}\lambda(B)\mu_B(D)
\left(\frac{|C_B|\,|D|}{|A|\,|K|}\right)^r
=\sum_{B\le K}\sum_{D\le B}\lambda(B)\mu_B(D)\,m_{B,D}^{-r},
\]
where $m_{B,D}=|G|/(|C_B|\,|D|)=|A:C_B|\,|K:D|$.
If some $m_{B,D}=1$, then $|A:C_B|=1$ and $|K:D|=1$, so $C_B=A$ and $D=K$.
Hence $B=K$ and $K$ acts trivially on $A$, which would make $G=A\times K$ abelian, contrary to hypothesis.
Thus every $m_{B,D}\ge 2$.
Grouping equal indices $m_{B,D}$ gives the displayed formula for $c_m(G)$, and summing the geometric series as in the proof of Theorem~\ref{thm:dirichlet-spectrum} gives the partial-fraction expansion for $\mathcal P_G(z)$.
\end{proof}

\begin{corollary}[Explicit Dirichlet coefficients for cyclic split quotients]\label{cor:cyclic-explicit-spectrum}
Assume that $G=A\rtimes \langle t\rangle$ is non-abelian, with $|t|=\omega$, and retain the notation $C_n=\C_A(B_n)$ from Theorem~\ref{thm:cyclic-exact-count}. Then for every integer $m\ge 2$,
\[
c_m(G)=\sum_{\substack{n\mid \omega,\ d\mid n\\ m=|A:C_n|\,\omega d/n}}\mu(d)\,|A:C_n|.
\]
Equivalently, if
\[
m_{n,d}:=|A:C_n|\,\omega d/n,
\]
then
\[
\mathcal P_G(z)=\sum_{n\mid \omega}\sum_{d\mid n}
\frac{\mu(d)\,|A:C_n|}{m_{n,d}(m_{n,d}-z)}.
\]
\end{corollary}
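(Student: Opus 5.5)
The plan is to mirror the proof of Corollary~\ref{cor:abelian-explicit-spectrum}, now using the cyclic exact count of Theorem~\ref{thm:cyclic-exact-count} in place of the general lattice formula. Dividing that theorem's second display by $|G|^r=(|A|\,\omega)^r$ gives
\[
P_r(G)=\sum_{n\mid\omega}\sum_{d\mid n}\mu(d)\,|A:C_n|\Bigl(\frac{|C_n|\,n}{d\,|A|\,\omega}\Bigr)^r
=\sum_{n\mid\omega}\sum_{d\mid n}\mu(d)\,|A:C_n|\,m_{n,d}^{-r},
\]
where
\[
m_{n,d}=\frac{|A|\,\omega\,d}{|C_n|\,n}=|A:C_n|\,\frac{\omega d}{n}.
\]
Each $m_{n,d}$ is a positive integer, since $n\mid\omega$ and $C_n\le A$.

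Next I will check that every $m_{n,d}\ge 2$. Suppose $m_{n,d}=1$. Then $|A:C_n|=1$ and $\omega d/n=1$. Since $d\ge1$ and $n\le\omega$, this forces $d=1$ and $n=\omega$. So $B_\omega=\langle t\rangle$ centralizes $A$, and $G=A\times\langle t\rangle$ would be abelian, contradicting the hypothesis.

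With that in hand, grouping the finitely many terms by the common value $m=m_{n,d}$ gives a representation $P_r(G)=\sum_{m\ge2}\tilde c_m m^{-r}$ valid for all $r\ge2$. Here $\tilde c_m$ is the displayed sum.

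To identify $\tilde c_m$ with the Torres--Giese coefficients $c_m(G)$ of Theorem~\ref{thm:dirichlet-spectrum}, I will use uniqueness of finite exponential expansions. Subtracting \eqref{eq:dirichlet-spectrum} gives $\sum_m (\tilde c_m-c_m(G))m^{-r}=0$ for all $r\ge2$, a finite sum with distinct bases $m^{-1}$. A Vandermonde argument, or induction that divides by the smallest $m$ and lets $r\to\infty$, forces all differences to vanish.

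This identification is the only step needing care. The abelian-quotient corollary glossed over it, but here it is short.

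Finally, summing the geometric series termwise exactly as in Theorem~\ref{thm:dirichlet-spectrum} gives
\[
\sum_{r\ge2}m_{n,d}^{-r}z^{r-2}=\frac{1}{m_{n,d}(m_{n,d}-z)}.
\]
This yields the stated partial-fraction formula for $\mathcal P_G(z)$.
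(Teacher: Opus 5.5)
Your proposal is correct and follows the paper's proof essentially step for step: divide the cyclic exact count by $|G|^r$, rule out $m_{n,d}=1$ using non-abelianness, group terms by index, and sum the geometric series. Two points improve on the paper's text without changing the route: your uniqueness-of-exponential-expansion step identifies the grouped coefficients with the Torres--Giese $c_m(G)$, which the paper leaves implicit, and you correctly deduce $d=1$, $n=\omega$ where the paper writes $d=n=\omega$.
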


\begin{proof}
By Theorem~\ref{thm:cyclic-exact-count},
\[
P_r(G)=\sum_{n\mid \omega}\sum_{d\mid n}\mu(d)\,|A:C_n|\left(\frac{|C_n|\,n/d}{|G|}\right)^r
=\sum_{n\mid \omega}\sum_{d\mid n}\mu(d)\,|A:C_n|\,m_{n,d}^{-r},
\]
where $m_{n,d}=|G|d/(|C_n|n)=|A:C_n|\,\omega d/n$.
If some $m_{n,d}=1$, then $|A:C_n|=1$ and $\omega d/n=1$, so necessarily $d=n=\omega$ and $C_\omega=A$.
Hence $t$ acts trivially on $A$, which makes $G=A\times C_\omega$ abelian, contrary to hypothesis.
Thus every $m_{n,d}\ge 2$, and grouping equal indices $m_{n,d}$ yields the displayed formula for $c_m(G)$.
Summing the geometric series as in the proof of Theorem~\ref{thm:dirichlet-spectrum} gives the partial-fraction expansion for $\mathcal P_G(z)$.
\end{proof}

\begin{corollary}[Linear recurrence, Hankel rank, and finite determination]\label{cor:recurrence-hankel}
Assume that $G$ is non-abelian, and let
\[
\{m\ge 2:c_m(G)\neq 0\}=\{m_1<\cdots<m_t\}.
\]
Write $\lambda_j:=m_j^{-1}$ for $1\le j\le t$, and let $\sigma_j$ be the $j$th elementary symmetric polynomial in $\lambda_1,\dots,\lambda_t$.
Then, for every $r\ge 2$,
\[
P_{r+t}(G)=\sigma_1P_{r+t-1}(G)-\sigma_2P_{r+t-2}(G)+\cdots+(-1)^{t-1}\sigma_tP_r(G).
\]
This recurrence has minimal order $t$. Equivalently, the infinite Hankel matrix
\[
\mathsf H_G:=\bigl(P_{i+j+2}(G)\bigr)_{i,j\ge 0}
\]
has rank $t$. In particular, once the spectral size $t$ is fixed, the initial block
\[
P_2(G),P_3(G),\dots,P_{2t+1}(G)
\]
determines the entire hierarchy $\{P_r(G)\}_{r\ge 2}$.
\end{corollary}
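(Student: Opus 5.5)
The starting point is Theorem~\ref{thm:dirichlet-spectrum}, which gives
\[
P_r(G)=\sum_{j=1}^t c_j\lambda_j^r, \qquad r\ge 2,
\]
where $c_j:=c_{m_j}(G)\neq 0$ and $\lambda_1>\cdots>\lambda_t>0$ are distinct.

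\emph{The recurrence.} Let
\[
Q(x):=\prod_{j=1}^t(x-\lambda_j)=x^t-\sigma_1x^{t-1}+\sigma_2x^{t-2}-\cdots+(-1)^t\sigma_t .
\]
Since $Q(\lambda_j)=0$, we have $\lambda_j^{r}Q(\lambda_j)=0$ for every $r\ge 2$. Multiplying by $c_j$ and summing over $j$ gives
\[
P_{r+t}-\sigma_1P_{r+t-1}+\cdots+(-1)^t\sigma_tP_r=0,
\]
which is exactly the displayed recurrence.

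\emph{Minimality and Hankel rank.} Write $\mathsf H_G=(P_{i+j+2})_{i,j\ge0}$. Substituting the spectral formula gives the factorization
\[
\mathsf H_G=V\,\mathrm{diag}(c_j\lambda_j^2)\,V^{\mathsf T}, \qquad V=(\lambda_j^i)_{i\ge0,\,1\le j\le t}.
\]
Here $V$ has rank $t$, since its top $t\times t$ block is a Vandermonde matrix with distinct nodes. The diagonal factor is invertible because each $c_j\neq0$ and each $\lambda_j>0$. Hence the $t\times t$ leading principal block $V_tDV_t^{\mathsf T}$ is nonsingular, and so $\operatorname{rank}\mathsf H_G=t$. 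This nonvanishing of the $t\times t$ leading minor is the step I regard as the crux, and the factorization makes it transparent.

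Now suppose a recurrence of order $s<t$ held for all $r\ge2$. Then column $s$ of $\mathsf H_G$ would be a linear combination of the columns $0,\dots,s-1$, and the same would hold for every later column by shifting $r$. The column space would then have dimension at most $s<t$, contradicting $\operatorname{rank}\mathsf H_G=t$. So the recurrence has minimal order $t$.

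\emph{Finite determination.} The intended meaning is that $t$ is known but the $\lambda_j$ are not. Consider the square Hankel system
\[
\sum_{k=0}^{t-1}\alpha_kP_{r+k}=P_{r+t}, \qquad r=2,\dots,t+1 .
\]
Its coefficient matrix is the leading block $(P_{i+j+2})_{0\le i,j\le t-1}$, which is invertible by the previous step. The system uses exactly the values $P_2,\dots,P_{2t+1}$. By the first step it has the solution given by the signed $\sigma$'s, so the recurrence coefficients are the unique solution and are determined by $P_2,\dots,P_{2t+1}$. The recurrence then generates every $P_r$ with $r\ge 2t+2$.
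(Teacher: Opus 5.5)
Your proposal is correct and follows the paper's proof essentially step for step. You use the same recurrence from $\prod_j(x-\lambda_j)$, the same Vandermonde factorization $V\,\mathrm{diag}(c_j\lambda_j^2)\,V^{\mathsf T}$ showing the leading $t\times t$ Hankel block is nonsingular, and the same square Hankel system in $P_2,\dots,P_{2t+1}$ for finite determination. The only cosmetic difference is that you take $\operatorname{rank}\mathsf H_G\le t$ from the factorization through a $t$-dimensional space, whereas the paper reads it off from the recurrence (every column beyond the first $t$ is a combination of the preceding ones).
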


\begin{proof}
By Theorem~\ref{thm:dirichlet-spectrum},
\[
P_r(G)=\sum_{j=1}^t c_{m_j}(G)\,\lambda_j^r
\qquad (r\ge 2),
\]
with distinct scalars $\lambda_1,\dots,\lambda_t$ and nonzero coefficients $c_{m_j}(G)$.
Thus each summand satisfies the characteristic polynomial
\[
\Phi_G(x):=\prod_{j=1}^t(x-\lambda_j)=x^t-\sigma_1x^{t-1}+\sigma_2x^{t-2}-\cdots+(-1)^t\sigma_t,
\]
and summing over $j$ yields the stated recurrence.

Let
\[
H_t:=\bigl(P_{i+j+2}(G)\bigr)_{0\le i,j<t}.
\]
Then
\[
H_t=V\,D\,V^{\mathsf T},
\qquad
V=(\lambda_j^i)_{0\le i<t,\ 1\le j\le t},
\qquad
D=\operatorname{diag}\bigl(c_{m_1}(G)\lambda_1^2,\dots,c_{m_t}(G)\lambda_t^2\bigr).
\]
Hence
\[
\det(H_t)=\Bigl(\prod_{j=1}^t c_{m_j}(G)\lambda_j^2\Bigr)
\prod_{1\le i<j\le t}(\lambda_j-\lambda_i)^2\neq 0,
\]
so $\operatorname{rank}(\mathsf H_G)\ge t$.
On the other hand, the recurrence shows that every column of $\mathsf H_G$ after the first $t$ columns is a fixed linear combination of the preceding $t$ columns, so $\operatorname{rank}(\mathsf H_G)\le t$.
Thus $\operatorname{rank}(\mathsf H_G)=t$, and in particular no shorter linear recurrence can hold.

Finally, write the recurrence in the form
\[
P_{r+t}(G)=\beta_0P_r(G)+\beta_1P_{r+1}(G)+\cdots+\beta_{t-1}P_{r+t-1}(G)
\qquad (r\ge 2).
\]
For $r=2,3,\dots,t+1$ this gives the linear system
\[
H_t\begin{pmatrix}\beta_0\\ \beta_1\\ \vdots\\ \beta_{t-1}\end{pmatrix}
=
\begin{pmatrix}P_{t+2}(G)\\ P_{t+3}(G)\\ \vdots\\ P_{2t+1}(G)\end{pmatrix}.
\]
Since $H_t$ is invertible, the coefficients $\beta_0,\dots,\beta_{t-1}$ are uniquely recovered from $P_2(G),\dots,P_{2t+1}(G)$.
Those coefficients together with $P_2(G),\dots,P_{t+1}(G)$ then determine all later values recursively.
\end{proof}

\begin{corollary}[Inverse finite-spectrum rigidity]\label{cor:inverse-spectrum}
Let $G$ and $H$ be non-abelian finite groups. If
\[
P_r(G)=P_r(H)
\]
for infinitely many integers $r\ge 2$ (in particular, for all sufficiently large $r$), then
\[
c_m(G)=c_m(H)\qquad\text{for every }m\ge 2.
\]
Equivalently,
\[
\mathcal P_G(z)=\mathcal P_H(z).
\]
Hence the finite abelian-index spectra coincide, and in particular
\[
\frac{m(G)}{|G|}=\frac{m(H)}{|H|},
\qquad
N_{\max}(G)=N_{\max}(H).
\]
If moreover $|G|=|H|$, then $m(G)=m(H)$ and
\[
\kappa_r(G)=\kappa_r(H)\qquad\text{for all }r\ge 1.
\]
\end{corollary}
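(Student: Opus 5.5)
The plan is to reduce everything to uniqueness of finite exponential-sum representations. By Theorem~\ref{thm:dirichlet-spectrum}, for every $r\ge 2$,
\[
P_r(G)-P_r(H)=\sum_{m\ge 2}\frac{c_m(G)-c_m(H)}{m^r}=:\sum_{j=1}^{s}\frac{d_j}{n_j^{\,r}},
\]
where $n_1<\cdots<n_s$ are the finitely many integers $m\ge 2$ with $d_j:=c_{n_j}(G)-c_{n_j}(H)\neq 0$.

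The goal is to show $s=0$. Suppose instead $s\ge 1$. Multiply by $n_1^{\,r}$ to get
\[
n_1^{\,r}\bigl(P_r(G)-P_r(H)\bigr)=d_1+\sum_{j\ge 2}d_j\,(n_1/n_j)^r .
\]
As $r\to\infty$, the right-hand side tends to $d_1\neq 0$, since every ratio $n_1/n_j$ is less than $1$. Hence the right-hand side is nonzero for all sufficiently large $r$. This contradicts the hypothesis that the left-hand side vanishes for infinitely many $r$. So $c_m(G)=c_m(H)$ for all $m$.

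This limiting argument is the only real step. It needs only that the bases are distinct and that there are finitely many of them. No Vandermonde or Hankel argument (Corollary~\ref{cor:recurrence-hankel}) is required, although one could alternatively use the recurrence of that corollary.

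The remaining claims follow in order.
\begin{itemize}
\item The equality $\mathcal P_G=\mathcal P_H$ is immediate from the partial-fraction formula \eqref{eq:PG-partial-fractions}.
\item The quantity $m_*=|G|/m(G)$ is the smallest $m$ with $c_m(G)\neq 0$. Theorem~\ref{thm:dirichlet-spectrum} gives $c_{m_*}(G)=N_{\max}(G)\ge 1$, and every other index occurring in the spectrum is larger than $m_*$ because $m_*$ is the minimal abelian index. Equal spectra therefore give equal $m_*$, hence $m(G)/|G|=m(H)/|H|$. They also give $N_{\max}(G)=c_{m_*}(G)=c_{m_*}(H)=N_{\max}(H)$.
\item If in addition $|G|=|H|$, then $m(G)=m(H)$ follows at once. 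Since the spectra are equal, $P_r(G)=P_r(H)$ for every $r\ge 2$, not just infinitely many.
\item Theorem~\ref{thm:burnside} then gives $\kappa_r(G)=|G|^rP_{r+1}(G)=|H|^rP_{r+1}(H)=\kappa_r(H)$ for all $r\ge 1$.
\end{itemize}

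The one point to handle carefully is the minimality of $m_*$ within the support of the spectrum. The proof of Theorem~\ref{thm:dirichlet-spectrum} already notes that every index in \eqref{eq:dirichlet-spectrum} is at least $m_*$, so I will cite it.
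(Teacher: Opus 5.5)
Your proposal is correct and follows the paper's own proof essentially step for step: the same argument (multiply by $m_0^r$ at the smallest index with nonzero coefficient difference and let $r\to\infty$ along the admissible values), followed by reading off $|G|/m(G)$ and $N_{\max}(G)$ from the first pole and its coefficient, and finally Theorem~\ref{thm:burnside} when $|G|=|H|$. Your explicit remark that $m_*$ is the minimum of the spectral support, because $c_{m_*}(G)=N_{\max}(G)\ge 1$ and every abelian index is at least $m_*$, spells out what the paper's phrase ``equality of first poles'' uses implicitly.
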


\begin{proof}
Set
\[
a_m:=c_m(G)-c_m(H),
\]
so that only finitely many $a_m$ are nonzero and, for every $r\ge 2$ in the given infinite set,
\[
0=P_r(G)-P_r(H)=\sum_{m\ge 2}\frac{a_m}{m^r}.
\]
Assume that some $a_m$ is nonzero, and let $m_0$ be the smallest index with $a_{m_0}\neq 0$.
Multiplying by $m_0^r$ gives
\[
0=a_{m_0}+\sum_{m>m_0} a_m\Bigl(\frac{m_0}{m}\Bigr)^r.
\]
Along the infinite sequence of admissible values of $r$, the second term tends to $0$ because $0<m_0/m<1$ for every $m>m_0$.
Passing to the limit yields $a_{m_0}=0$, a contradiction.
Hence all $a_m=0$, proving the coefficient identity and therefore also $\mathcal P_G(z)=\mathcal P_H(z)$ via \eqref{eq:PG-partial-fractions}.

The equality of first poles gives $|G|/m(G)=|H|/m(H)$, and the equality of the corresponding pole coefficients gives $N_{\max}(G)=N_{\max}(H)$ by Theorem~\ref{thm:dirichlet-spectrum}.
If $|G|=|H|$, then the equality of ratios yields $m(G)=m(H)$, and since $P_r(G)=P_r(H)$ for all $r\ge 2$, Theorem~\ref{thm:burnside} gives
\[
\kappa_r(G)=|G|^rP_{r+1}(G)=|H|^rP_{r+1}(H)=\kappa_r(H)
\]
for every $r\ge 1$.
\end{proof}

\begin{corollary}[Special values at \texorpdfstring{$z=\pm 1$}{z=+/-1}]\label{cor:special-values}
If $G$ is non-abelian, then
\[
\Sigma_G=\sum_{m\ge 2}\frac{c_m(G)}{m(m-1)},
\qquad
\Alt_G=\sum_{m\ge 2}\frac{c_m(G)}{m(m+1)}.
\]
\end{corollary}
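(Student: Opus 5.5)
The plan is to evaluate the partial-fraction expansion \eqref{eq:PG-partial-fractions} of Theorem~\ref{thm:dirichlet-spectrum} at $z=1$ and $z=-1$. This requires first knowing that both points lie in the region where the power series $\mathcal P_G(z)$ and the finite rational expression agree.

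\emph{Step 1: convergence and validity.} By Corollary~\ref{cor:rank-series}, for non-abelian $G$ the radius of convergence is $R_{\mathrm{prob}}(G)=|G|/m(G)>1$. Hence $\Sigma_G=\mathcal P_G(1)$ and $\Alt_G=\mathcal P_G(-1)$ are values of an absolutely convergent power series.

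Next I would check that the identity \eqref{eq:PG-partial-fractions} holds pointwise for $|z|<2$, not merely as formal series. The finite sum is only over $m\ge 2$ with $c_m(G)\neq 0$. For each such $m$ and each $|z|<m$ the geometric series gives
\[
\sum_{r\ge 2}\frac{z^{r-2}}{m^r}=\frac{1}{m(m-z)}.
\]
Swapping the finite sum over $m$ with the sum over $r$ is legitimate. Therefore
\[
\sum_{r\ge 2}P_r(G)z^{r-2}=\sum_{m\ge 2}\frac{c_m(G)}{m(m-z)}
\]
for all $|z|<2$, and in particular at $z=\pm1$.

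\emph{Step 2: evaluate.} Setting $z=1$ gives $\Sigma_G=\sum_m c_m(G)/(m(m-1))$. Setting $z=-1$ gives
\[
\sum_{r\ge 2}P_r(G)(-1)^{r-2}=\sum_{r\ge 2}(-1)^rP_r(G)=\Alt_G,
\]
which equals $\sum_m c_m(G)/(m(m+1))$.

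The only point needing care is that the nonzero $c_m$ are supported on $m\ge 2$. This holds because Theorem~\ref{thm:dirichlet-spectrum} defines $c_m$ only for $m\ge 2$: for non-abelian $G$ every abelian subgroup has index at least $2$. With that, $z=\pm1$ avoids all poles and lies inside every disc $|z|<m$. There is no real obstacle beyond this bookkeeping.
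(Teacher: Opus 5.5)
Your proposal is correct and follows the paper's own proof, which simply evaluates the partial-fraction expansion \eqref{eq:PG-partial-fractions} at $z=1$ and $z=-1$. Your extra check adds justification the paper leaves implicit: because every contributing index satisfies $m\ge 2$, each geometric series converges for $|z|<2$, so the identity holds pointwise there and hence at $z=\pm1$.
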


\begin{proof}
Evaluate \eqref{eq:PG-partial-fractions} at $z=1$ and $z=-1$.
\end{proof}

\begin{remark}[Dirichlet value versus ordinary generating value]\label{rem:dirichlet-vs-ordinary}
The topological specialization $P_2(G,-1)=\chi(E(2,G))$ of Torres-Giese \cite[Thm.~1.1]{TorresGiese2012} is different from the alternating sum $\Alt_G=\mathcal P_G(-1)$. In the notation above,
\[
P_2(G,-1)=\sum_{m\ge 2} c_m(G)\,m,
\qquad
\Alt_G=\sum_{m\ge 2}\frac{c_m(G)}{m(m+1)}.
\]
Thus the same finite abelian-index spectrum $\{(m,c_m(G))\}$ simultaneously controls the Dirichlet value, the ordinary generating-series value, and the asymptotic pole data.
\end{remark}

\begin{remark}[Root geometry of the truncation polynomials]\label{rem:EK-truncations}
For $N\ge 2$ let
\[
\mathcal P_{G,N}(z):=\sum_{r=2}^{N}P_r(G)z^{r-2}=\sum_{j=0}^{N-2} a_j z^j,\qquad a_j:=P_{j+2}(G).
\]
By Proposition~\ref{prop:monotone}, the coefficients satisfy
\[
a_0\ge a_1\ge \cdots \ge a_{N-2}>0.
\]
Hence the reversed polynomial
\[
z^{N-2}\mathcal P_{G,N}(1/z)=\sum_{j=0}^{N-2} a_{N-2-j}z^j
\]
has nondecreasing positive coefficients. By the Enestr\"om--Kakeya theorem, and equally by the Joyal--Labelle--Rahman extension for real monotone coefficients \cite{GovilRahman1968,JoyalLabelleRahman1967}, all zeros of the reversed polynomial lie in $|z|\le 1$. Equivalently, every zero $\zeta$ of $\mathcal P_{G,N}(z)$ satisfies
\[
|\zeta|\ge 1.
\]
Thus the partial rank-growth polynomials are zero-free in the open unit disk. In the abelian case one has $\mathcal P_{G,N}(z)=1+z+\cdots+z^{N-2}$, so the zeros lie exactly on the unit circle.
\end{remark}

\medskip

\section{Conclusion}

The main point of this paper is that higher commutativity in finite groups has an exact dominant term governed by the geometry of the maximum-order abelian subgroups. The asymptotic formula, the finite abelian-index spectrum for $\mathcal P_G(z)$, and the first-pole coefficient theorem show that the hierarchy has a rigid exponential base and a finite spectral support.

The same spectrum also forces a linear recurrence of exact order equal to the number of nonzero spectral coefficients, equivalently a finite-rank Hankel matrix for the hierarchy. Thus, once the spectral size is fixed, a finite initial block determines the entire tower $\{P_r(G)\}_{r\ge 2}$.

The split-extension formula
\[
m(G)=\max_{B\le K\ \mathrm{abelian}} |\C_A(B)|\,|B|
\]
shows that the dominant asymptotic base is already visible in semidirect products, not only in direct-product or class-$2$ examples. We now go further for abelian split quotients: if $G=A\rtimes K$ with $K$ abelian, then the full hierarchy admits the exact subgroup-lattice expansion
\[
|\Hom(\mathbb Z^r,G)|=\sum_{B\le K}\lambda(B)\,|\C_A(B)|^r\,\varphi_r(B),
\]
and therefore the entire finite Dirichlet spectrum is explicit up to the coefficients $\lambda(B)$; in the cyclic and coprime cases those coefficients are completely explicit. The leading coefficient is the top stratum of this exact expansion,
\[
N_{\max}(G)=\sum_{\substack{B\le K\\ |\C_A(B)|\,|B|=m(G)}}\lambda(B).
\]
In the coprime-action regime $(|A|,|K|)=1$, Schur--Zassenhaus forces a uniform collapse
\[
\lambda(B)=|A:\C_A(B)|\qquad (B\le K),
\]
so both the exact tuple count and the leading coefficient are determined entirely by fixed-point data. Thus the remaining ambiguity in the general abelian-split formula is concentrated in the genuinely non-coprime case. The low-rank loop-groupoid, Drinfeld-double, and quantum-triple formulas are now entirely deferred to the companion paper \cite{LevitShwartzPartII}, so the center of gravity here is purely the finite-group asymptotic/spectral package. A natural next step is to understand the non-coprime lift multiplicities $\lambda(B)$ in the same explicit spirit.

\end{document}